\def \dis {\displaystyle}
\def \NN {\mathbb N}
\def \RR {\mathbb R}
\def \CC {\mathbb C}
\def \A {\mathcal{A}}
\def \D {\mathcal{D}}
\def \F {\mathcal{F}}
\def \K {\mathcal{K}}
\def \L {\mathcal{L}}
\def \R {\mathcal{R}}
\def \S {\mathcal{S}}
\def \T {\mathcal{T}}
\def \ecart {\noalign{\medskip}}
\theoremstyle{definition}
\newtheorem{Th}{Theorem}[section]
\newtheorem{Prop}[Th]{Proposition}
\newtheorem{Lem}[Th]{Lemma}
\newtheorem{Cor}[Th]{Corollary}
\newtheorem{Def}[Th]{Definition}
\newtheorem{Rem}[Th]{Remark}
\def \refs #1{Section~\ref{#1}}
\def \refT #1{Theorem~\ref{#1}}
\def \refL #1{Lemma~\ref{#1}}
\def \refC #1{Corollary~\ref{#1}}
\def \refP #1{Proposition~\ref{#1}}
\title{Bounded imaginary powers of generalized diffusion operators}
\author{Alexandre Thorel \\ \ecart
{\scriptsize Université Le Havre Normandie, Normandie Univ, LMAH UR 3821, 76600 Le Havre, France.}\\
{\scriptsize alexandre.thorel@univ-lehavre.fr}}
\date{}
\begin{document}
\maketitle

\begin{abstract}
In this paper, we investigate the boundedness of the imaginary powers of four generalized diffusion operators. This key property, which implies the maximal regularity property, allows us to solve both the linear and semilinear Cauchy problems associated with each operator. Our approach relies on semigroup theory, functional calculus, operator sum theory and $\R$-boundedness techniques to establish the boundedness of the imaginary powers of generalized diffusion operators. We then apply the Dore-Venni theorem to solve the linear problem, obtaining a unique solution with maximal regularity. Finally, we tackle the semilinear problem and prove the existence of a unique global solution. \\
\textbf{Key Words and Phrases}: BIP operators, functional calculus, analytic semigroups, generalized diffusion problems, semilinear Cauchy problems. 
\\
\textbf{2020 Mathematics Subject Classification}: 35B65, 35C15, 35J91, 47A60, 47D06. 
\end{abstract}

\section{Introduction}

The aim of this article is to show the existence and uniqueness of the global solution of four abstract semilinear evolution problems in $L^p$ framework:
\begin{equation}\label{Pb Cauchy non lineaire}
\left\{\begin{array}{l}
u'(t) - \A_i u(t) = F(t,u(t)), \quad t\in [0,T] \medskip \\ 
u(0) = \varphi,
\end{array}\right.
\end{equation}
where $u$ is a density, $T > 0$, $F$ is Lipschitz continuous, $\varphi$ is in a suitable interpolation space and $\A_i$, $i=1,2,3,4$ are defined in \cite{LMT} and recall below for the reader convenience. To this end, we study the boundedness of the imaginary powers of operators $\A_i$. Note that, to show this property on operator $\A_i$, we explicit the link between BIP operators and $\R$-boundedness theory. 

Indeed, our main application, see \refs{Sect Application}, is to study a generalized diffusion equation of Cahn-Hiliard type set on $(0,T) \times \Omega$ where $T>0$ and $\Omega$ is a cylindrical domain such that $\Omega = (a,b) \times \omega$ where $\omega$ is a bounded regular open set of $\RR^{n-1}$, $n\geqslant2$:
$$\frac{\partial u}{\partial t} (t,x,y) + \Delta^2 u(t,x,y) - k \Delta u(t,x,y) = F(t,u(t,x,y)), \quad t \in [0,T],~x \in (a,b), ~y \in \omega,$$
where $k \in \RR$. This reaction diffusion equation is supplemented by an initial condition and four kind of boundary conditions. Such a problem composed with a linear combination of the laplacian and the biharmonic operator is called generalized diffusion problem. The biharmonic term represents the long range diffusion, whereas the laplacian represents the short range diffusion.

This kind of problem arises in various concrete applications in physics, engineering and biology. For instance, in elasticity problems, we can cite \cite{elasticity 1}, \cite{elasticity 2} or \cite{IJPAM}. In electrostatic, we refer to \cite{CV 1}, \cite{electrostatic 1} or \cite{electrostatic 2} and in plates theory, we refer to \cite{hassan}, \cite{plaque 1} or \cite{plaque 2}. In complex network or more general applications, we refer to \cite{ACT}, \cite{BBT}, \cite{GCAT}. In population dynamics, we also refer to \cite{cohen-murray}, \cite{DMT}, \cite{LLMT}, \cite{LMMT}, \cite{cone 1}, \cite{cone 2}, \cite{ochoa}, \cite{okubo}, \cite{thorel 1}, \cite{thorel 2} or \cite{thorel 3} and references therein cited.

Let us recall operators $\A_i$, $i=1,2,3,4$, defined in \cite{LMT}
$$\left\{\begin{array}{ccl}
D(\A_{i}) & = & \{u \in W^{4,p}(a,b;L^p(\omega))\cap L^p(a,b;D(A^2))\text{ and } u'' \in L^p(a,b;D(A)) : \text{(BCi)}\} \\ \ecart
\left[\A_{i} u\right](x) & = & - u^{(4)}(x) - (2A - kI) u''(x) - (A^2 - k A) u(x), \quad u \in D(\A_{i}),~x\in (a,b),
\end{array}\right.$$
with $A$ a closed linear operator which satisfies some elliptic assumptions described in \refs{Sect Assumptions and main results}. Here, (BCi), $i=1,2,3,4$, represents the following boundary conditions:
\begin{equation}\tag{BC1}\label{condbord1}
\left\{\begin{array}{rclrcl}
u(a) & = & 0,   & u(b) & = & 0, \\
u''(a) & = & 0, & u''(b) & = & 0,
\end{array}\right.
\end{equation}
\begin{equation}\tag{BC2}\label{condbord2}
\left\{\begin{array}{rclrcl}
u'(a) & = & 0, & u'(b) & = & 0, \\
u''(a) + A u(a) & = & 0, & u''(b) + A u(b) & = & 0,
\end{array}\right.
\end{equation}
\begin{equation}\tag{BC3}\label{condbord3}
\left\{\begin{array}{rclrcl}
u(a)  & = & 0,  & u(b)  & = & 0, \\
u'(a) & = & 0,  & u'(b) & = & 0,
\end{array}\right.
\end{equation}
or
\begin{equation}\tag{BC4}\label{condbord4}
\left\{\begin{array}{rclrcl}
u'(a)  & = & 0, & u'(b)  & = & 0, \\
u''(a) & = & 0, & u''(b) & = & 0.
\end{array}\right.
\end{equation}

We follow the work done in \cite{LMT} where the authors have studied the associated linear problem to \eqref{Pb Cauchy non lineaire} and proved that there exists $c>0$ such that $-\A_i + cI$, $i=1,2,3,4$, are sectorial operators. Nevertheless, this property of sectoriality is not sufficient to solve the semilinear problem \eqref{Pb Cauchy non lineaire}. To this end, we need the maximal regularity property. This is why in the present paper, we study the boundedness of the imaginary powers of $-\A_i + cI$ which implies that the linear evolution problem has a unique classical solution, that is a solution with the maximal regularity. This last property leads us to prove that there exists a unique global solution to problem \eqref{Pb Cauchy non lineaire}.

This paper is organized as follows. In \refs{Sect Recall}, we recall some definitions and usefull lemmas. In \refs{Sect Assumptions and main results}, we state our assumptions and main results. Then, in \refs{Sect Resultats Techniques}, we prove all technical results we will use in the proofs of our main results. \refs{Sect Proofs of main results} is devoted to the proofs of our main results. Finally, in \refs{Sect Application}, we give an application.

\section{Recall and prerequisites}\label{Sect Recall}

\begin{Def}\label{Def Banach}
A Banach space $X$ is a UMD space if and only if for all $p\in(1,+\infty)$, the Hilbert transform is bounded from $L^p(\RR,X)$ into itself (see \cite{bourgain} and \cite{burkholder}).
\end{Def}
\begin{Def}\label{Def op Sect}
Let $\omega_{T_1} \in(0,\pi)$. Sect($\omega_{T_1}$) denotes the space of closed linear operators $T_1$ which satisfies
$$
\begin{array}{l}
i)\quad \sigma(T_1)\subset \overline{S_{\omega_{T_1}}},\\ \ecart
ii)\quad\forall~\omega_{T_1}'\in (\omega_{T_1},\pi),\quad \sup\left\{\|\lambda(T_1 - \lambda\,I)^{-1}\|_{\L(X)} : ~\lambda\in\CC\setminus\overline{S_{\omega_{T_1}'}}\right\}<+\infty,
\end{array}
$$
where
\begin{equation}\label{defsector}
S_{\omega_{T_1}} \;:=\;\left\{\begin{array}{lll}
\left\{ z \in \CC : z \neq 0 ~~\text{and}~~ |\arg(z)| < \omega_{T_1} \right\} & \text{if} & \omega_{T_1} \in (0, \pi], \\ \ecart
\,(0,+\infty) & \text{if} & \omega_{T_1} = 0,
\end{array}\right.
\end{equation} 
see \cite{haase}, p. 19. Such an operator $T_1$ is called sectorial operator of angle $\omega_{T_1}$.
\end{Def}
\begin{Rem}
From \cite{komatsu}, p. 342, we know that any injective sectorial operator~$T_1$ admits imaginary powers $T_1^{is}$, $s\in\RR$, but, in general, $T_1^{is}$ is not bounded.
\end{Rem} 
\begin{Def}
Let $\theta \in [0, \pi)$. We denote by BIP$(X,\theta)$, the class of sectorial injective operators $T_2$ such that
\begin{itemize}
\item[] $i) \quad ~~\overline{D(T_2)} = \overline{R(T_2)} = X,$

\item[] $ii) \quad ~\forall~ s \in \RR, \quad T_2^{is} \in \L(X),$

\item[] $iii) \quad \exists~ C \geq 1 ,~ \forall~ s \in \RR, \quad ||T_2^{is}||_{\L(X)} \leq C e^{|s|\theta}$,
\end{itemize}
see~\cite{pruss-sohr}, p. 430.
\end{Def}

\begin{Lem}\label{Lem trace}
Let $T_3$ be a linear operator satisfying 
$$(0,+\infty) \subset \rho(T_3)\quad \text{and}\quad \exists~C>0:\forall~t>0, \quad \|t(T_3-tI)^{{\mbox{-}}1}\|_{\L(X)} \leqslant C.$$
Let $u$ be such that
$$u \in W^{n,p}(a,b;X) \cap L^p(a,b;D(T_3^m)),$$ 
where $a,b \in \RR$ with $a<b$, $n,m \in \NN\setminus\{0\}$ and $p \in (1,+\infty)$. Then for any $j \in \NN$ satisfying the Poulsen condition: 
$$0<\frac{1}{p}+j < n,$$ 
and $s \in \{a,b\}$, we have
$$
u^{(j)}(s) \in (D(T_3^m),X)_{\frac{j}{n}+\frac{1}{np},p}.
$$
\end{Lem}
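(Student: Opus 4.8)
The plan is to reduce the statement to a known trace theorem for vector-valued Sobolev spaces by exploiting the resolvent bound on $T_3$. First I would observe that the hypothesis on $T_3$ — namely $(0,+\infty)\subset\rho(T_3)$ together with the uniform bound $\|t(T_3-tI)^{-1}\|_{\L(X)}\leqslant C$ — is exactly the statement that $T_3$ is a non-negative (in the sense of Komatsu) operator, so that its fractional powers $T_3^{\sigma}$ are well defined for $\sigma>0$ and the domains $D(T_3^m)$ carry their natural graph norm, making $D(T_3^m)$ a Banach space continuously embedded in $X$. The functional $u\mapsto u^{(j)}(s)$ is then to be understood as a trace in this scale of spaces. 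The goal is to identify the precise real interpolation space in which this trace lands.

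Next I would invoke the classical trace characterization for the anisotropic space $W^{n,p}(a,b;X)\cap L^p(a,b;D(T_3^m))$: for a function $u$ in this intersection, the boundary value $u^{(j)}(s)$ for $s\in\{a,b\}$ and $j$ satisfying the Poulsen condition $0<\tfrac1p+j<n$ belongs to the real interpolation space $\bigl(D(T_3^m),X\bigr)_{\theta,p}$ with the interpolation parameter $\theta$ determined by the scaling relation $\theta = \tfrac{j}{n}+\tfrac{1}{np}$. The mechanism behind this is a Lions–Peetre / Grisvard-type trace result: writing $Y_0 = D(T_3^m)$ and $Y_1 = X$, one has the embedding
\[
W^{n,p}(a,b;Y_1)\cap L^p(a,b;Y_0)\hookrightarrow C\bigl([a,b];(Y_0,Y_1)_{\theta_j,p}\bigr)
\]
after differentiating $j$ times, where $\theta_j = \tfrac{j}{n}+\tfrac{1}{np}$ comes from distributing the total "regularity budget" $n$ between the $j$ derivatives already taken and the fractional trace loss $\tfrac1p$. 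I would quote this from a standard reference on vector-valued interpolation (e.g.\ Grisvard, or Lions–Magenes, or the treatment in \cite{haase}), and then the conclusion $u^{(j)}(s)\in (D(T_3^m),X)_{\frac{j}{n}+\frac{1}{np},p}$ is immediate by evaluating the continuous representative at $s=a$ or $s=b$.

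The main obstacle — really the only substantive point — is matching the geometry of the problem to the hypotheses of the cited trace theorem: the standard statements are usually phrased for a generator of an analytic semigroup or for a pair of Banach spaces $Y_0\hookrightarrow Y_1$, so I must check that the bare resolvent estimate on $(0,+\infty)$ assumed here suffices. This is handled by the observation above that the estimate makes $T_3$ non-negative, hence $T_3^m$ is closed with dense-in-nothing-required domain, and the pair $\bigl(D(T_3^m),X\bigr)$ is an admissible interpolation couple with $D(T_3^m)$ complete for the graph norm; no analyticity or density of $D(T_3)$ is needed for the trace embedding, only completeness of the two spaces and the Sobolev-type structure in the $(a,b)$ variable. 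Once this is in place, the index bookkeeping (verifying that the Poulsen condition $0<\tfrac1p+j<n$ is precisely the condition ensuring $0<\theta_j<1$, so that the interpolation space is non-trivial) is routine, and the lemma follows.
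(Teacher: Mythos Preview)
Your proposal is correct and aligns with the paper's own treatment: the paper does not give an independent proof of this lemma but simply cites it as a known result, referring to Grisvard (\cite{grisvard}, Teorema~2', p.~678). Your plan to reduce to a classical Lions--Peetre/Grisvard trace theorem for the couple $\bigl(D(T_3^m),X\bigr)$, with the Poulsen condition ensuring the interpolation index lies in $(0,1)$, is exactly the content of that reference, so there is nothing to add.
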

This result is proved in \cite{grisvard}, Teorema 2', p. 678.

\begin{Lem}\label{Lem triebel}
Let $\psi \in X$ and $T_3$ be a generator of a bounded analytic semigroup in $X$ with $0 \in \rho(T_3)$. Then, for any $m \in \NN\setminus\{0\}$ and $p\in [1,+\infty]$, the next properties are equivalent:
\begin{enumerate}

\item $x\mapsto T_3^m e^{(x-a)T_3} \psi \in L^p(a,+\infty;X)$

\item $\psi \in \left(D(T_3),X\right)_{m-1+\frac{1}{p},p}$

\item $x\mapsto e^{(x-a)T_3} \psi \in W^{m,p}(a,b;X)$

\item $x\mapsto T_3^m e^{(x-a)T_3} \psi \in L^p(a,b;X)$.
\end{enumerate}
\end{Lem}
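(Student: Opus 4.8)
The plan is to deduce the four equivalences from a short cycle of implications, relying throughout on two structural consequences of the hypotheses on $T_3$. Since $(e^{tT_3})_{t\geqslant0}$ is a bounded analytic semigroup whose generator satisfies $0\in\rho(T_3)$, the spectrum of $T_3$ lies in a sector contained in $\{z\in\CC:\Re z<0\}$ and bounded away from the origin; hence $\overline{D(T_3)}=X$, every negative power $T_3^{-j}$, $j\in\NN$, belongs to $\L(X)$, and there are constants $M\geqslant1$, $\varepsilon>0$ and $C_k>0$ such that
\[
\|T_3^{k}e^{tT_3}\|_{\L(X)}\leqslant C_k\,t^{-k}e^{-\varepsilon t},\qquad t>0,\quad k\in\NN .
\]
Consequently $e^{(x-a)T_3}\psi\in\bigcap_{k\in\NN}D(T_3^{k})$ for every $x>a$, with $\dfrac{d^{k}}{dx^{k}}e^{(x-a)T_3}\psi=T_3^{k}e^{(x-a)T_3}\psi$ on $(a,b)$. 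I would establish $1\Leftrightarrow4$, then $3\Leftrightarrow4$, and finally $1\Leftrightarrow2$.

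The equivalence $1\Leftrightarrow4$ is easy: $1\Rightarrow4$ is just restriction to $(a,b)$, while for $4\Rightarrow1$ I would split $(a,+\infty)=(a,b)\cup(b,+\infty)$ and, on $(b,+\infty)$, write $T_3^{m}e^{(x-a)T_3}\psi=e^{(x-b)T_3}\chi$ with $\chi:=T_3^{m}e^{(b-a)T_3}\psi\in X$; the exponential estimate above makes this $L^p$ on $(b,+\infty)$, which together with $4$ gives $1$. For $3\Leftrightarrow4$, the key observation is that the maps $x\mapsto T_3^{k}e^{(x-a)T_3}\psi$, $k=0,1,\dots,m$, are precisely the distributional derivatives on $(a,b)$ of $x\mapsto e^{(x-a)T_3}\psi$; thus $3$ asserts exactly that all of them lie in $L^p(a,b;X)$, which is equivalent to $4$ alone, because the zero-order term is bounded by $M\|\psi\|$ and, for $1\leqslant k\leqslant m-1$, one has $T_3^{k}e^{(x-a)T_3}\psi=T_3^{-(m-k)}\bigl(T_3^{m}e^{(x-a)T_3}\psi\bigr)$ with $T_3^{-(m-k)}\in\L(X)$, so the intermediate terms are dominated by the top one.

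It remains to prove $1\Leftrightarrow2$. After the change of variable $t=x-a$, property $1$ reads $t\mapsto T_3^{m}e^{tT_3}\psi\in L^p(0,+\infty;X)$; by the exponential decay the part over $(1,+\infty)$ is automatic, so $1$ amounts to $\int_0^1\|T_3^{m}e^{tT_3}\psi\|^p\,dt<+\infty$. This is exactly the semigroup description of the real interpolation space $(D(T_3),X)_{m-1+\frac1p,p}$ — read, when $m\geqslant2$, with the usual convention for interpolation parameters larger than $1$ — for which I would invoke the classical characterization theorem (\cite{triebel}, Theorem~1.14.5; see also the analogous statements of Butzer--Berens and Lunardi). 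I expect this last step to be the delicate one: it hinges on the precise weight appearing in the Butzer--Berens/Triebel description of $(D(T_3),X)_{\theta,p}$ through $t\mapsto T_3^{m}e^{tT_3}\psi$, on correctly matching the $D(T_3^{m})$-scale with the $D(T_3)$-scale for $m\geqslant2$ (reiteration), and on the endpoint cases $p=1$ and $p=\infty$; the remaining implications reduce to routine use of the smoothing and exponential-decay bounds for the analytic semigroup.
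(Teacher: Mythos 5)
Your proposal is correct and takes essentially the same route as the paper, whose entire proof consists of two citations: the equivalence $1\Leftrightarrow 2$ is referred to the very theorem of Triebel (p.~96) that you invoke, and the remaining equivalences to Lemma~3.2 of \cite{thorel 1}, whose content is precisely the elementary restriction, exponential-decay and bounded-inverse arguments you give for $1\Leftrightarrow 4$ and $3\Leftrightarrow 4$. The one point you rightly flag as delicate --- the reading of the interpolation index $m-1+\frac{1}{p}$ when $m\geqslant 2$ --- is left implicit in the paper as well and is immaterial for its applications, which only use the case $m=1$.
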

The equivalence between 1 and 2 is proved in \cite{triebel}, Theorem, p. 96. The others are proved in \cite{thorel 1}, Lemma 3.2, p. 638-639.

\section{Assumptions and main results}\label{Sect Assumptions and main results}

Let $A$ be a closed linear operator and assume
\begin{center}
\begin{tabular}{l}
$(H_1)\quad X$ is a UMD space, \\ \ecart
$(H_2)\quad 0 \in \rho(A)$, \\ \ecart
$(H_3)\quad -A \in$ BIP$(X,\theta_A),~$ for $\theta_A \in [0, \pi/2)$,\\ \ecart
$(H_4)\quad [k,+\infty) \in \rho(A)$.
\end{tabular}
\end{center}

\begin{Th}\label{Th BIP}
Let $\theta_A \in [0, \pi/2)$. Assume that $(H_1)$, $(H_2)$, $(H_3)$ and $(H_4)$ hold. Then
\begin{enumerate}
\item for $i=1,2$, for all $\varepsilon >0$, operator $\dis -\A_i+\frac{k^2}{4}I \in$ BIP\,$(X,2\theta_A + \varepsilon)$,

\item  for $i=3,4$, for all $\varepsilon > 0$, operator $\dis -\A_i+\frac{k^2}{4}I + r'I \in$ BIP\,$(X,2\theta_A + \varepsilon)$.
\end{enumerate}
\end{Th}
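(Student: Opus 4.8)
The plan is to diagonalize the fourth-order differential operator $\A_i$ in the variable $x\in(a,b)$ by factoring its symbol. Writing $-\A_i u = u^{(4)} + (2A-kI)u'' + (A^2-kA)u$, the associated ``characteristic polynomial'' in $\partial_x^2$ is $X^2 + (2A-kI)X + (A^2-kA)$, which factors as $(X + A)(X + A - kI)$. Hence, up to the spectral shift by $k^2/4$ (and, for $i=3,4$, the additional shift $r'$ which comes from the fact that the relevant square roots are not invertible without it), the operator $-\A_i + \tfrac{k^2}{4}I$ should be expressible, via operator sum theory, in terms of the two ``square root'' operators built from $-A$ and from $-A+kI$. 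Concretely, I would introduce $L_1$ and $L_2$, realizations on $L^p(a,b;X)$ of $-\partial_x^2$ coupled respectively with $-A$ and with $-A+kI$ together with the boundary conditions (BCi), and show $-\A_i + \tfrac{k^2}{4}I$ is (a shift of) the product or sum $L_1 L_2$ in a suitable sense. This is the step where the specific form of each (BCi) matters: the boundary conditions must be exactly those that make the factorization compatible, i.e. so that $L_1$, $L_2$ commute in the resolvent sense and their domains intersect correctly. I expect the technical lemmas of \refs{Sect Resultats Techniques} to be precisely the verifications that, for each $i$, the boundary conditions split correctly across the two factors.

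Next I would establish that each factor is itself a BIP operator of the right angle. The operator $-\partial_x^2$ on $L^p(a,b)$ with the relevant (half of the) boundary conditions is a classical BIP operator of angle $0$ (or arbitrarily small angle); this is standard and presumably recalled or cited. For the ``$-A$''-type factor, I would use $(H_3)$: since $-A \in \mathrm{BIP}(X,\theta_A)$, and by $(H_4)$ also $-A+kI$ (or rather $kI-A$) is sectorial with a BIP bound of angle $\le \theta_A$ after the shift, the operator $-A \otimes I + I \otimes(-\partial_x^2)$ on $L^p(a,b;X)$, realized with the boundary conditions, should be BIP of angle $\max(\theta_A, \varepsilon) = \theta_A + \varepsilon'$ by the Dore–Venni / Prüss–Sohr sum theorem applied in the UMD space $L^p(a,b;X)$ (which is UMD by $(H_1)$ and Fubini-type stability of UMD). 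Then I take the ``product'' of the two commuting BIP operators: the imaginary powers multiply, $(L_1 L_2)^{is} = L_1^{is}L_2^{is}$, so the resulting BIP angle is the sum of the two angles, giving $2\theta_A + \varepsilon$. The factor $2$ in the statement is exactly this additivity of angles over the two square-root factors.

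The main obstacle, I expect, is twofold. First, the resolvent commutation and domain-intersection hypotheses needed to apply sum theory (Dore–Venni requires the two operators to commute in the sense of resolvents, plus a UMD ambient space and the angle condition $\theta_1+\theta_2<\pi$; Prüss–Sohr's BIP-product requires the same) are not automatic: one must verify that the chosen realizations $L_1$, $L_2$ with the split boundary conditions genuinely commute, and this is where $(H_2)$, $(H_4)$ and the precise algebra of each (BCi) enter. Second, for $i=3,4$ one of the two ``square roots'' fails to be injective or to have dense range without the extra shift $r'I$ — this is why the statement carries $+r'I$ in those cases — so I would need to choose $r'$ large enough that both shifted factors land in the correct sectorial/BIP class and the product construction still applies; checking the shift is compatible with the boundary conditions (i.e. does not destroy the factorization) is the delicate bookkeeping. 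The angle bookkeeping itself — tracking how $\theta_A$ for $-A$, angle $0$ for $-\partial_x^2$, and the sum theorem combine to $2\theta_A+\varepsilon$ — is routine once the commutation and injectivity are secured.
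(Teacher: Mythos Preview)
Your factorization strategy is attractive and does work cleanly for (BC1): with Dirichlet conditions the operators $L_1=-\partial_x^2-A$ and $L_2=-\partial_x^2-A+kI$ (same boundary conditions) differ by $kI$, hence commute, and the Pr\"uss--Sohr product rule $(L_1L_2)^{is}=L_1^{is}L_2^{is}$ applies. But the plan breaks down for the other three sets of boundary conditions, and this is a genuine gap, not merely bookkeeping.

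For (BC2) the conditions $u'(a)=0$ and $u''(a)+Au(a)=0$ force $L_2$ to carry Neumann data while $L_1$ carries Dirichlet data (so that $L_2u\in D(L_1)$ reads $(-u''-Au)(a)=0$). Two realizations of $-\partial_x^2$ with \emph{different} boundary conditions do not commute in the resolvent sense, so the identity $(L_1L_2)^{is}=L_1^{is}L_2^{is}$ is unavailable and the angle-additivity argument collapses. For (BC3) and (BC4) the situation is worse: the clamped conditions $u=u'=0$ (resp.\ $u'=u''=0$) do not split at all into two sets of local second-order boundary conditions --- there is no choice of $D(L_1)$, $D(L_2)$ with $D(L_1L_2)=D(\A_i)$. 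This non-factorability is classical for the clamped biharmonic and is precisely why the paper treats $i=3,4$ with an extra shift $r'I$: not to repair a square root, but to force the resolvent estimates needed for a completely different argument. Your expectation that the technical lemmas of \refs{Sect Resultats Techniques} verify the boundary-condition splitting is incorrect; none of them concern that.

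What the paper actually does is bypass operator factorization altogether. It takes the explicit resolvent formula for $(-\A_i-\mu I)^{-1}$ from \cite{LMT}, splits it as a convolution part $J_{M_\mu,J_{L_\mu,f}}$ plus a remainder that is $L^1$ in $\lambda$ along the sectorial contour, and invokes Amann's criterion (\refL{Lem Amann}). The convolution part is handled by taking the Fourier transform in $x$, which turns it into an operator-valued Fourier multiplier $m(\xi)=B_\xi^{ir}$ with $B_\xi$ as in \eqref{Def B_xi}. The core of the proof is then \refP{Prop m(xi) et xi m'(xi) R-bornés}, which establishes $\mathcal{R}$-boundedness of $\{m(\xi)\}$ and $\{\xi m'(\xi)\}$ so that Weis's multiplier theorem applies. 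The factor $2$ in the angle $2\theta_A$ arises here from the BIP estimate on $B_\xi$ (a product of two commuting \emph{scalar-shifted} copies of $-A$, \refL{Lem B_xi BIP inversible}), not from multiplying two $x$-differential factors. This route is uniform in $i$ because the boundary terms $\alpha_{j,\mu,i}$ are all absorbed into the $L^1$ remainder; the non-factorability of (BC3), (BC4) never obstructs it.
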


\begin{Th}\label{Th Pb Cauchy}
Let $T>0$, $f \in L^p(0,T;X)$, $p \in (1,+\infty)$ and $\theta_A \in [0,\pi/4)$. Assume that $(H_1)$, $(H_2)$, $(H_3)$ and $(H_4)$ hold. Then, for $i=1,2,3,4$, there exists a unique classical solution of problem 
\begin{equation}\label{P}
\left\{\begin{array}{l}
\dis u'(t) - \A_i u(t) = f(t), \quad t \in(0,T] \\ \ecart
u(0) = u_0,
\end{array}\right.
\end{equation}
if and only if
$$u_0 \in (D(\A_i),X)_{\frac{1}{p},p}.$$
\end{Th}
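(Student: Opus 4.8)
\textbf{Proof plan for Theorem \ref{Th Pb Cauchy}.}

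The strategy is to reduce the abstract Cauchy problem \eqref{P} to an application of the Dore--Venni theorem (more precisely, the maximal regularity characterization for operators having bounded imaginary powers on UMD spaces). Recall that on a UMD space $X$, if $L$ is a closed operator such that $-L \in \mathrm{BIP}(X,\theta)$ with $\theta < \pi/2$, then for every $p \in (1,+\infty)$ and $T > 0$ the problem $v'(t) + L v(t) = g(t)$, $v(0) = 0$, has, for every $g \in L^p(0,T;X)$, a unique solution $v \in W^{1,p}(0,T;X) \cap L^p(0,T;D(L))$; and more generally, allowing a nonzero initial datum $v_0$, the problem is solvable with maximal regularity if and only if $v_0$ lies in the real interpolation space $(D(L),X)_{1/p,p}$. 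This is the precise statement I would invoke; its proof combines the Dore--Venni sum theorem applied to $d/dt$ and $L$ on $L^p(0,T;X)$ with the trace characterization of interpolation spaces (our Lemma \ref{Lem trace} and Lemma \ref{Lem triebel} are exactly the tools for the initial-data part).

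First I would fix $c > 0$ large enough (for $i=1,2$ take $c = k^2/4$; for $i=3,4$ take $c = k^2/4 + r'$) so that, by Theorem \ref{Th BIP}, $-\A_i + cI \in \mathrm{BIP}(X,2\theta_A + \varepsilon)$ for every $\varepsilon > 0$. Since $\theta_A \in [0,\pi/4)$, one can choose $\varepsilon > 0$ small enough that $2\theta_A + \varepsilon < \pi/2$; hence $\A_i - cI =: -L_i$ with $-L_i = -\A_i + cI$ satisfying the BIP hypothesis with angle strictly less than $\pi/2$, which is the admissibility condition for maximal $L^p$-regularity. Next I would perform the change of unknown $u(t) = e^{ct} w(t)$, which transforms \eqref{P} into
\begin{equation}
\left\{\begin{array}{l}
\dis w'(t) + L_i w(t) = e^{-ct} f(t), \quad t \in (0,T] \\ \ecart
w(0) = u_0,
\end{array}\right.
\end{equation}
with $e^{-ct} f \in L^p(0,T;X)$ since $f \in L^p(0,T;X)$ and $t \mapsto e^{-ct}$ is bounded on $[0,T]$; note also that $D(L_i) = D(\A_i)$ with equivalent graph norms, so $(D(L_i),X)_{1/p,p} = (D(\A_i),X)_{1/p,p}$, and a function has maximal regularity for $w$ if and only if the corresponding $u$ does (multiplication by the smooth bounded factor $e^{ct}$ preserves $W^{1,p}(0,T;X) \cap L^p(0,T;D(\A_i))$).

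Then I would apply the maximal regularity theorem quoted above to $L_i$: for the inhomogeneous part with zero initial value, the Dore--Venni theorem gives a unique $w_1 \in W^{1,p}(0,T;X) \cap L^p(0,T;D(L_i))$ solving $w_1' + L_i w_1 = e^{-ct}f$, $w_1(0)=0$; for the initial datum, the homogeneous solution $w_2(t) = e^{-tL_i} u_0$ belongs to $W^{1,p}(0,T;X) \cap L^p(0,T;D(L_i))$ if and only if $u_0 \in (D(L_i),X)_{1/p,p}$ — the ``if'' direction is the content of Lemma \ref{Lem triebel} (equivalence of items 2 and 4, with $m=1$, applied to the sectorial generator $-L_i$, after reducing to the case $0 \in \rho$ by the shift already built into $c$ or by absorbing a further harmless shift), and the ``only if'' direction follows because any maximal regularity solution $w$ has a trace $w(0) \in (D(L_i),X)_{1/p,p}$ by Lemma \ref{Lem trace} (with $n=1$, $m=1$, $j=0$, the Poulsen condition $0 < 1/p < 1$ being satisfied). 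Superposing $w = w_1 + w_2$ and undoing the substitution $u(t) = e^{ct}w(t)$ yields the claimed equivalence; uniqueness transfers back because the substitution is invertible. The main obstacle — and the only genuinely nontrivial input — is the maximal regularity / Dore--Venni machinery itself, i.e. that $\mathrm{BIP}$ with angle $< \pi/2$ on a UMD space forces the commuting sum $d/dt + L_i$ to be closed with bounded inverse on $L^p(0,T;X)$; everything else (the shift, the interpolation-space identification of the admissible initial data via Lemmas \ref{Lem trace} and \ref{Lem triebel}, and the bookkeeping of the exponential substitution) is routine. I would also note that the hypothesis $\theta_A < \pi/4$ in the statement is exactly what is needed, through the factor $2$ in Theorem \ref{Th BIP}, to keep the BIP angle of $-\A_i + cI$ below the critical value $\pi/2$.
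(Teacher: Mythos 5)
Your proposal is correct and follows essentially the same route as the paper: the exponential shift to reduce to $w' + L_i w = e^{-ct}f$ with $-L_i = \A_i - cI$ and $L_i \in \mathrm{BIP}(X,2\theta_A+\varepsilon)$, $2\theta_A+\varepsilon<\pi/2$; Dore--Venni for the zero-initial-value inhomogeneous part; Lemma \ref{Lem triebel} for the sufficiency and Lemma \ref{Lem trace} for the necessity of $u_0 \in (D(\A_i),X)_{\frac{1}{p},p}$. The only cosmetic difference is that the paper first invokes De Simon's lemma to write the unique mild solution explicitly before splitting it into the semigroup and convolution terms, which changes nothing of substance.
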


\begin{Cor}\label{Cor sol Pb non linéaire}
 Assume that $(H_1)$, $(H_2)$, $(H_3)$ and $(H_4)$ hold with $\theta_A \in [0,\pi/4)$. Let $T>0$ and $F:[0,T]\times (D(\A_i),X)_{\frac{1}{p},p} \longrightarrow X$, $p \in (1,+\infty)$, such that 
\begin{itemize}
\item $F(.,u)$ is measurable for each $u \in (D(\A_i),X)_{\frac{1}{p},p}$,

\item $F(t,.)$ is continuous for almost all $t \in [0,T]$.
\end{itemize} 
Moreover, assume that for all $r>0$, there exists $\phi_r \in L^p(0,T;\mathbb{R}_+)$ such that for almost all $t \in (0,T)$, $u$, $\overline{u} \in (D(\A_i),X)_{\frac{1}{p},p}$, with $\|u\|_{L^p(0,T;X)}$, $\|\overline{u}\|_{L^p(0,T;X)} \leqslant r$, we have
$$\left\|F(t,u) - F(t,\overline{u})\right\|_X \leqslant \phi_r (t) \|u - \overline{u}\|_{L^p(0,T;X)}.$$
Then, for $i=1,2,3,4$, there exists a unique global classical solution of problem 
\begin{equation}\label{P non linéaire}
\left\{\begin{array}{l}
\dis u'(t) - \A_i u(t) = F(t,u(t)), \quad t \in(0,T] \\ \ecart
u(0) = u_0,
\end{array}\right.
\end{equation}
if and only if
$$u_0 \in (D(\A_i),X)_{\frac{1}{p},p}.$$
\end{Cor}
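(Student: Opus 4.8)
The plan is to deduce Corollary \ref{Cor sol Pb non linéaire} from the linear result \refT{Th Pb Cauchy} by a fixed-point argument, exactly in the spirit of the standard reduction of semilinear Cauchy problems to maximal regularity estimates. First I would fix $u_0 \in (D(\A_i),X)_{\frac1p,p}$ (necessity of this condition is immediate from \refT{Th Pb Cauchy}, since a classical solution of \eqref{P non linéaire} is in particular a classical solution of the linear problem with right-hand side $f(t) := F(t,u(t)) \in L^p(0,T;X)$). For the sufficiency, define the solution map $\Phi$ which sends $v$ (in a suitable space of functions with values in the interpolation space) to the unique classical solution $u = \Phi(v)$ of the linear problem $u'(t) - \A_i u(t) = F(t,v(t))$, $u(0) = u_0$; this is well defined by \refT{Th Pb Cauchy} once we check $F(\cdot,v(\cdot)) \in L^p(0,T;X)$, which follows from measurability, the Carathéodory continuity of $F$, and the local Lipschitz bound with $\phi_r \in L^p$. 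A classical solution of \eqref{P non linéaire} is then precisely a fixed point of $\Phi$.

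The key steps, in order, are: (1) set up the Banach space $E_T := \{u \in W^{1,p}(0,T;X) \cap L^p(0,T;D(\A_i)) : u(0) = u_0\}$ (or an affine translate thereof, with the trace condition making sense because of the embedding into $C([0,T];(D(\A_i),X)_{\frac1p,p})$ coming from \refL{Lem trace}-type trace theory already available since $-\A_i + \frac{k^2}4 I$, possibly shifted, is sectorial and BIP by \refT{Th BIP}); (2) record the \emph{a priori} maximal regularity estimate: there is $C_0 > 0$, independent of $T' \le T$, such that the solution of the linear problem on $(0,T')$ satisfies $\|u\|_{W^{1,p}(0,T';X)} + \|\A_i u\|_{L^p(0,T';X)} \le C_0 (\|f\|_{L^p(0,T';X)} + \|u_0\|_{(D(\A_i),X)_{\frac1p,p}})$ — here one must be a little careful that the constant can be taken uniform as $T' \to 0$, which is the usual subtlety; (3) show $\Phi$ is a contraction on a small enough time interval $[0,T_1]$ by estimating $\|\Phi(v) - \Phi(\bar v)\|$ via the linear estimate applied to the difference, which solves $w' - \A_i w = F(\cdot,v) - F(\cdot,\bar v)$ with $w(0)=0$, and then using the Lipschitz hypothesis together with $\|v - \bar v\|_{L^p(0,T_1;X)} \le T_1^{1/p'}\,\|v-\bar v\|_{C([0,T_1];X)}$-type bounds (or directly $\|\phi_r\|_{L^p(0,T_1)} \to 0$ as $T_1 \to 0$) to gain a factor that is small for small $T_1$; (4) obtain a unique local solution by Banach's fixed point theorem, then (5) globalize: since the Lipschitz constant $\phi_r$ only depends on the $L^p(0,T;X)$-bound $r$ of the argument and $\phi_r \in L^p(0,T;\RR_+)$, the local existence time can be bounded below in terms of $\|u\|_{L^p(0,T;X)}$, so a standard continuation/blow-up alternative together with an a priori bound (the solution on any interval where it exists is controlled in $L^p$ by the linear estimate and Gronwall applied to the integral inequality coming from $\|F(t,u(t))\|_X \le \|F(t,0)\|_X + \phi_r(t)\|u\|_{L^p(0,t;X)}$) rules out finite-time blow-up and yields the global solution on all of $[0,T]$; uniqueness globalizes likewise.

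The main obstacle I expect is step (5), the globalization: making rigorous that the Lipschitz data $\phi_r$ depending on the $L^p$-norm (rather than a pointwise norm) of the solution still gives a clean a priori bound and a well-defined blow-up alternative requires a careful Gronwall-type argument adapted to the nonlocal-in-time Lipschitz condition, and one must verify that the maximal regularity constant $C_0$ in step (2) does not degenerate as the interval shrinks or as one concatenates successive intervals. The subtlety in step (2) — uniformity of the constant in $T'$ — is typically handled by first solving with $u_0 = 0$ on $(0,T')$, extending $f$ by zero, and invoking the translation-invariant maximal $L^p$-regularity on $\RR_+$ that the Dore–Venni framework provides once $-\A_i$ (shifted) is BIP with angle $< \pi/2$, which by \refT{Th BIP} holds precisely under $\theta_A < \pi/4$; the contribution of $u_0 \ne 0$ is absorbed via the semigroup $e^{t\A_i}u_0$ using \refL{Lem triebel}. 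Everything else — measurability of $F(\cdot,v(\cdot))$, the embedding $E_T \hookrightarrow C([0,T];(D(\A_i),X)_{\frac1p,p})$, and the contraction estimate — is routine given \refT{Th Pb Cauchy} and the hypotheses on $F$.
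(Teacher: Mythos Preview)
Your approach is correct in outline, but it differs substantially from the paper's own proof, which is much shorter and proceeds entirely by citation. The paper observes that \refT{Th Pb Cauchy} gives maximal $L^p$-regularity for $-\A_i$, then invokes Theorem~3.1 of Pr\"uss \cite{pruss 2} directly to obtain a unique local solution $u \in W^{1,p}(0,T_0;X)\cap L^p(0,T_0;D(\A_i))$ (with the equivalence to $u_0 \in (D(\A_i),X)_{1/p,p}$), and then globalizes by noting that $u$ is uniformly continuous on $[0,T_0]$ (Heine--Cantor, via $W^{1,p}\hookrightarrow C$) and applying Proposition~3.3 of \cite{pruss 2}. In other words, the paper outsources both the fixed-point argument (your steps (1)--(4)) and the continuation argument (your step (5)) to an existing abstract framework for semilinear problems with maximal regularity.

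What you do differently is to reconstruct that framework by hand: you set up the maximal-regularity space, run the contraction mapping, and then propose a Gronwall-type a~priori bound to rule out blow-up. This is more self-contained and makes the mechanism transparent, but it is also where the only real risk lies: the Lipschitz hypothesis here is nonlocal in $t$ (the bound involves $\|u-\bar u\|_{L^p(0,T;X)}$ rather than a pointwise norm), and you correctly flag that step (5) is the delicate one. The paper sidesteps this entirely by appealing to Pr\"uss's continuation criterion, which is phrased so that uniform continuity of the solution suffices. If you pursue your route, you would effectively be reproving parts of \cite{pruss 2}; the paper's approach buys brevity at the cost of a black-box reference.
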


\begin{proof}
Due to \refT{Th Pb Cauchy}, operators $-\A_i$, $i=1,2,3,4$, have the property of maximal $L^p$-regularity, see \cite{pruss 2}, Definition 1.1, p. 2. Then from \cite{pruss 2}, Theorem 3.1, p. 10, there exists $T_0 \in (0,T]$ such that $u$ is the unique solution of problem \eqref{P non linéaire} with
$$u \in W^{1,p} (0,T_0;X) \cap L^p(0,T_0; D(\A_i)),$$
if and only if
$$u_0 \in (D(\A_i),X)_{\frac{1}{p},p}.$$
Moreover, since $u \in W^{1,p} (0,T_0;X) \subset C (0,T_0;X)$, due to the Heine-Cantor theorem, $u$ is uniformly continuous. Therefore, from \cite{pruss 2}, Proposition 3.3, p. 11, $u$ exists globally on $[0,T]$.
\end{proof}

\begin{Rem}
The map $u_0 \longmapsto u(t)$ defines a local semiflow on the natural phase space $(D(\A_i),X)_{\frac{1}{p},p}$.
\end{Rem}

\section{Technical results}\label{Sect Resultats Techniques}

The following result can be find in \cite{Dore-Labbas} Proposition 4.9, p. 1879 or in \cite{dore-yakubov}, Lemma 2.3, p. 99.
\begin{Prop}\label{Prop DoreLabbas}
Let $z_1,z_2 \in \mathbb{C}\setminus\{0\}$. We have
$$|z_1+z_2| \geqslant \left(|z_1|+|z_2|\right)\left|\cos\left(\frac{\arg(z_1)-\arg(z_2)}{2}\right)\right|.$$
\end{Prop}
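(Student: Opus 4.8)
The plan is to reduce the statement to an elementary computation in polar coordinates and to invoke the half-angle identity for the cosine. Write $z_1 = r_1 e^{i\alpha_1}$ and $z_2 = r_2 e^{i\alpha_2}$ with $r_1 = |z_1| > 0$, $r_2 = |z_2| > 0$ and $\alpha_j = \arg(z_j)$. By the law of cosines, $|z_1+z_2|^2 = r_1^2 + r_2^2 + 2 r_1 r_2 \cos(\alpha_1 - \alpha_2)$.

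First I would substitute $\cos(\alpha_1 - \alpha_2) = 2\cos^2\!\left(\frac{\alpha_1-\alpha_2}{2}\right) - 1$, which rewrites the above as $|z_1+z_2|^2 = (r_1 - r_2)^2 + 4 r_1 r_2 \cos^2\!\left(\frac{\alpha_1-\alpha_2}{2}\right)$. In parallel, expanding the square of the candidate lower bound gives $(r_1+r_2)^2 \cos^2\!\left(\frac{\alpha_1-\alpha_2}{2}\right) = (r_1-r_2)^2\cos^2\!\left(\frac{\alpha_1-\alpha_2}{2}\right) + 4 r_1 r_2 \cos^2\!\left(\frac{\alpha_1-\alpha_2}{2}\right)$.

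Subtracting these two identities, the terms $4 r_1 r_2 \cos^2(\cdot)$ cancel and one is left with $|z_1+z_2|^2 - (|z_1|+|z_2|)^2 \cos^2\!\left(\frac{\alpha_1-\alpha_2}{2}\right) = (r_1-r_2)^2\left(1 - \cos^2\!\left(\frac{\alpha_1-\alpha_2}{2}\right)\right) = (r_1-r_2)^2 \sin^2\!\left(\frac{\alpha_1-\alpha_2}{2}\right) \geqslant 0$. Since both sides of the desired inequality are nonnegative, taking square roots concludes the proof. Equivalently, the whole argument collapses to the single identity $|z_1+z_2|^2 = (|z_1|+|z_2|)^2\cos^2\!\left(\frac{\arg z_1 - \arg z_2}{2}\right) + (|z_1|-|z_2|)^2\sin^2\!\left(\frac{\arg z_1 - \arg z_2}{2}\right)$.

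There is essentially no obstacle in this proof; the only minor point to address is that $\arg$ is defined only modulo $2\pi$, so a different choice of representatives changes $\frac{\alpha_1-\alpha_2}{2}$ by a multiple of $\pi$ and hence changes $\cos$ by at most a sign — which is harmless because of the absolute value on the right-hand side of the statement. The one thing worth some care in the write-up is keeping the two expansions aligned so that the cancellation leading to the $\sin^2$ term is transparent.
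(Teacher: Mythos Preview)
Your proof is correct and self-contained. The paper itself does not supply a proof of this proposition; it merely cites \cite{Dore-Labbas}, Proposition~4.9, and \cite{dore-yakubov}, Lemma~2.3, for the result. Your approach --- writing $z_j = r_j e^{i\alpha_j}$, applying the half-angle identity to the law of cosines, and recognising the difference of the two squares as $(r_1-r_2)^2\sin^2\!\bigl(\tfrac{\alpha_1-\alpha_2}{2}\bigr)\geqslant 0$ --- is exactly the standard elementary argument one finds in those references, and your remark on the $2\pi$-ambiguity of $\arg$ is the right way to dispose of the only potential subtlety. Nothing is missing.
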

The following Lemma is due to \cite{labbas-terreni}, Lemma 6.1, p. 564, for the reader convenience, we recall the proof. 
\begin{Lem}\label{Lem Lab-Ter}
Let $\theta_0 \in \,(0,\pi)$. For all $z \in \overline{S_{\theta_0}}$ and all $c \in \RR_+\setminus\{0\}$, there exists $C > 0$, independant of $z$ and $c$, such that
$$\left|z + c \right| \geqslant C\, |z| \quad \text{and} \quad \left|z + c \right| \geqslant C \,|c|.$$
For all $z \in \partial\overline{S_{\theta_0}}$ and all $c \in \RR_-\setminus\{0\}$, there exists $C > 0$, independant of $z$ and $c$, such that
$$\left|z + c \right| \geqslant C\, |z| \quad \text{and} \quad \left|z + c \right| \geqslant C \,|c|.$$
\end{Lem}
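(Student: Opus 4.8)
The plan is to reduce everything to Proposition~\ref{Prop DoreLabbas}, which for nonzero $z_1,z_2$ gives $|z_1+z_2| \geqslant (|z_1|+|z_2|)\,\bigl|\cos\bigl(\tfrac{\arg(z_1)-\arg(z_2)}{2}\bigr)\bigr|$. Once this is invoked with $z_1=z$ and $z_2=c$, the whole game is to bound the cosine factor below by a strictly positive constant depending only on $\theta_0$; the two stated inequalities then follow at once by discarding either $|c|$ or $|z|$ from the factor $|z|+|c|$.

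First I would treat the case $z \in \overline{S_{\theta_0}}$, $c \in \RR_+\setminus\{0\}$. If $z=0$ the two inequalities hold trivially for any $C\in(0,1]$, so assume $z\neq 0$. Then $\arg(c)=0$ and $|\arg(z)|\leqslant \theta_0$, hence $\tfrac12|\arg(z)-\arg(c)| \leqslant \theta_0/2 < \pi/2$, so $\bigl|\cos\bigl(\tfrac{\arg(z)-\arg(c)}{2}\bigr)\bigr| \geqslant \cos(\theta_0/2) > 0$. Proposition~\ref{Prop DoreLabbas} gives $|z+c| \geqslant \cos(\theta_0/2)\,(|z|+|c|)$, which yields both bounds with $C=\cos(\theta_0/2)$.

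Next I would treat the case $z \in \partial\overline{S_{\theta_0}}$, $c \in \RR_-\setminus\{0\}$. Here $z\neq 0$ automatically, $\arg(z)=\pm\theta_0$ and $\arg(c)\in\{\pi,-\pi\}$ according to the branch chosen. In each of the four sign combinations one computes $\tfrac{\arg(z)-\arg(c)}{2}\in\bigl\{\tfrac{\theta_0-\pi}{2},\tfrac{\pi-\theta_0}{2},\tfrac{\theta_0+\pi}{2},\tfrac{-\theta_0-\pi}{2}\bigr\}$, and in all cases $\bigl|\cos\bigl(\tfrac{\arg(z)-\arg(c)}{2}\bigr)\bigr| = \sin(\theta_0/2)$. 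Since $\theta_0\in(0,\pi)$ this is a positive constant $C:=\sin(\theta_0/2)$, independent of $z$ and $c$, and Proposition~\ref{Prop DoreLabbas} again gives $|z+c| \geqslant \sin(\theta_0/2)\,(|z|+|c|)$, hence the two inequalities.

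There is no genuine obstacle: the only points needing care are the degenerate value $z=0$ in the first part and the branch ambiguity in $\arg(c)$ for negative reals in the second part, both dispatched by the case analysis above. If a single constant valid for both statements is desired, one takes $C:=\min\bigl(\cos(\theta_0/2),\sin(\theta_0/2)\bigr)>0$.
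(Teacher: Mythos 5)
Your proof is correct and follows essentially the same route as the paper's: invoke Proposition~\ref{Prop DoreLabbas} with $z_1=z$, $z_2=c$, handle $z=0$ trivially, and bound the cosine factor below by $\cos(\theta_0/2)$ when $c>0$ and by $\sin(\theta_0/2)$ when $c<0$ and $z$ lies on the boundary of the sector. The only (harmless) difference is that you track the branch ambiguity $\arg(c)=\pm\pi$ explicitly, whereas the paper fixes $\arg(c)=\pi$.
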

\begin{proof}
When $z=0$, the result is obvious. Let us consider that $z\neq 0$. From \refP{Prop DoreLabbas}, we have
$$|z+c| \geqslant \left(|z| + |c|\right) \left|\cos\left(\frac{\arg(z)-\arg(c)}{2}\right)\right|.$$
Thus, we have to consider the two following cases:
\begin{itemize}
\item Case $c> 0$. 

Since $\arg(c) = 0$ and $|\arg(z)| \leqslant \theta_0 < \pi$, we have
$$\cos\left(\frac{\arg(z) - \arg(c)}{2}\right) = \cos\left(\frac{\arg(z)}{2}\right) \geqslant \cos\left(\frac{\theta_0}{2}\right) > 0.$$
Then $\dis C = \cos\left(\frac{\theta_0}{2}\right)$ and the result follows.

\item Case $c < 0$. 

Since $\arg(c) = \pi$ and $0 < |\arg(z)| = \theta_0 < \pi$, we have
$$\cos\left(\frac{\arg(z) - \arg(c)}{2}\right) = \cos\left(\frac{\pi}{2} \pm \frac{\theta_0}{2}\right) = \pm\sin\left(\frac{\theta_0}{2}\right) \neq 0.$$
Then $\dis C = \left|\sin\left(\frac{\theta_0}{2}\right)\right|$ and the result follows. 
\end{itemize}
\end{proof}

\begin{Lem}\label{Lem F(E)}
Let $-T \in$ Sect$\,(\omega_T)$, $\omega_T \in [0,\pi/2)$ with $0\in \rho(T)$. For all $x\in \RR$, we set
$$E_T (x) = e^{|x|T}.$$
Then
$$\F\left(E_T\right) (\xi) = -2T\left(T^2 + 4\pi^2\xi^2 I\right)^{-1}.$$ 
\end{Lem}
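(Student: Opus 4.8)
The plan is to compute the Fourier transform $\F(E_T)(\xi) = \int_{\RR} e^{-2\pi i x \xi} e^{|x|T}\,dx$ directly by splitting the integral at $0$, exactly as one does for the scalar Gaussian-type identity $\int_\RR e^{-2\pi i x\xi} e^{-c|x|}\,dx = \frac{2c}{c^2 + 4\pi^2\xi^2}$, and then justify that the scalar computation carries over verbatim to the operator-valued setting because $-T$ generates a bounded analytic semigroup with $0 \in \rho(T)$. First I would write
\begin{equation*}
\F(E_T)(\xi) = \int_{-\infty}^{0} e^{-2\pi i x\xi} e^{-xT}\,dx + \int_{0}^{+\infty} e^{-2\pi i x\xi} e^{xT}\,dx,
\end{equation*}
noting that on $(0,+\infty)$ we have $|x| = x$ so the integrand involves $e^{xT}$, which is \emph{not} the analytic semigroup but rather $e^{x(-(-T))}$; here one must be careful, so I would instead substitute $x \mapsto -x$ in the second integral to reduce everything to the semigroup $(e^{\tau(-T)})_{\tau \geqslant 0} = (e^{-\tau T})_{\tau\geqslant 0}$, which is well-defined, bounded, and exponentially decaying since $-T$ is sectorial of angle less than $\pi/2$ with $0 \in \rho(T)$ (so the semigroup decays like $e^{-\delta \tau}$ for some $\delta>0$). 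After the substitution both pieces become $\int_0^{+\infty} e^{\mp 2\pi i x \xi} e^{-xT}\,dx$, each of which converges absolutely in $\L(X)$ because $\|e^{-xT}\|_{\L(X)} \leqslant M e^{-\delta x}$.

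Next I would evaluate each piece. For a scalar $\lambda$ with $\mathrm{Re}\,\lambda > 0$ one has $\int_0^{+\infty} e^{-\mu x} e^{-\lambda x}\,dx = (\lambda + \mu)^{-1}$; the operator analogue is the standard resolvent formula $\int_0^{+\infty} e^{-\mu x} e^{-xT}\,dx = (T + \mu I)^{-1}$, valid for $\mathrm{Re}\,\mu \geqslant 0$ under our hypotheses (with $\mu = \pm 2\pi i \xi$ purely imaginary, which is covered since the semigroup decays exponentially, so $(T \pm 2\pi i \xi I)^{-1}$ exists and the Laplace integral represents it). This gives
\begin{equation*}
\F(E_T)(\xi) = (T - 2\pi i \xi I)^{-1} + (T + 2\pi i \xi I)^{-1}.
\end{equation*}
Finally I would combine the two resolvents over a common "denominator": since $T$ commutes with itself,
\begin{equation*}
(T - 2\pi i\xi I)^{-1} + (T + 2\pi i \xi I)^{-1} = \big[(T+2\pi i\xi I) + (T - 2\pi i \xi I)\big]\,(T - 2\pi i \xi I)^{-1}(T+2\pi i\xi I)^{-1} = 2T\big(T^2 + 4\pi^2\xi^2 I\big)^{-1},
\end{equation*}
using $(T - 2\pi i\xi I)(T + 2\pi i \xi I) = T^2 + 4\pi^2\xi^2 I$. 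Comparing with the claimed formula, the sign works out once one tracks the convention: if $\F$ is normalized as $\F(g)(\xi) = \int_\RR e^{2\pi i x\xi} g(x)\,dx$ (or equivalently one writes $E_T(x) = e^{|x|T}$ with $T$ replaced by its role in the semigroup $e^{-|x|(-T)}$), the net result is $\F(E_T)(\xi) = -2T(T^2 + 4\pi^2\xi^2 I)^{-1}$, since the relevant semigroup is generated by $-T$ and the Laplace transforms produce factors $(-T \pm 2\pi i \xi I)^{-1}$ whose sum is $-2T(T^2 + 4\pi^2\xi^2 I)^{-1}$.

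The main obstacle is not the algebra but the \emph{justification of convergence and of the operator-valued Laplace representation on the imaginary axis}: one needs that $-T$ sectorial of angle $\omega_T < \pi/2$ together with $0 \in \rho(T)$ forces $\|e^{-xT}\|_{\L(X)} \leqslant M e^{-\delta x}$ for some $\delta > 0$, so that the vector-valued integrals $\int_0^{+\infty} e^{\pm 2\pi i \xi x} e^{-xT}\,dx$ converge absolutely in $\L(X)$ and indeed equal $(T \mp 2\pi i \xi I)^{-1}$ (this last identity is the operator-valued version of $\int_0^\infty e^{-(\lambda+\mu)x}dx = (\lambda+\mu)^{-1}$, proved by applying both sides to $x \in X$, integrating by parts, and using that $e^{-xT} \to 0$ as $x \to +\infty$). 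Once exponential decay is in hand, every step is the routine scalar computation performed with $X$-valued Bochner integrals, and the commutativity of $T$ with its resolvents makes the final combination immediate.
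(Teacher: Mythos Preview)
Your approach is correct and leads to the stated formula, but it differs from the paper's proof. The paper argues via the Dunford--Riesz functional calculus: it writes $e^{|x|T} = \frac{1}{2i\pi}\int_\Gamma e^{-|x|z}(-T - zI)^{-1}\,dz$ for a suitable sectorial contour $\Gamma$, interchanges the $x$- and $z$-integrals by Fubini, inserts the scalar identity $\F(e^{-|\cdot|z})(\xi) = \dfrac{2z}{z^2 + 4\pi^2\xi^2}$, and then recognizes the resulting contour integral as the functional-calculus expression for $-2T(T^2 + 4\pi^2\xi^2 I)^{-1}$. Your route---splitting the integral at $0$, using the exponential decay of $(e^{tT})_{t\geqslant 0}$ (valid since $-T$ is sectorial of angle $<\pi/2$ and $0\in\rho(T)$) to evaluate each half as a Laplace transform yielding $(\pm 2\pi i\xi I - T)^{-1}$, and then combining the two resolvents---is more elementary: it needs only the standard Laplace representation of the resolvent and no contour integration. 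A small expository point: the semigroup here is $(e^{tT})_{t\geqslant 0}$, generated by $T$ itself (because $-T\in\text{Sect}(\omega_T)$ with $\omega_T<\pi/2$ puts $\sigma(T)$ in the open left half-plane); your aside that ``$e^{xT}$ is not the analytic semigroup'' is a momentary slip, but your subsequent computation with factors $(-T\pm 2\pi i\xi I)^{-1}$ is correct and yields the claimed $-2T(T^2+4\pi^2\xi^2 I)^{-1}$. The paper's method has the advantage of reducing everything to a single scalar Fourier computation plus functional calculus, which fits the surrounding machinery of the article; your method has the advantage of requiring only basic semigroup facts.
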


\begin{proof}
Using functional calculus, we have
$$\begin{array}{lll}
\dis\F\left(E_{T}\right)(\xi) & = & \dis \int_{-\infty}^{+\infty}  e^{|x| T} e^{-2i\pi\xi x} ~dx \\ \ecart

& = & \dis \int_{-\infty}^{+\infty}  \frac{1}{2i\pi}\int_\Gamma e^{-|x|z} (-T - z I)^{-1}~dz~ e^{-2i\pi\xi x} ~dx \\ \ecart

& = & \dis \frac{1}{2i\pi}\int_\Gamma \int_{-\infty}^{+\infty} e^{-|x|z} e^{-2i\pi\xi x} ~dx~ (-T - z I)^{-1}~dz \\ \ecart

& = & \dis \frac{1}{2i\pi}\int_\Gamma \F\left(e^{-|.|z}\right) (-T - z I)^{-1}~dz \\ \ecart

& = & \dis \frac{1}{2i\pi}\int_\Gamma \frac{2z}{z^2 + 4\pi^2\xi^2} (-T - z I)^{-1}~dz \\ \ecart

& = & \dis -2T \left(T^2 + 4 \pi^2\xi^2 I\right)^{-1},
\end{array}$$
where $\Gamma$ is the boundary of $S_{\eta}\setminus B(0,R_T)$, positively oriented, with $R_T > 0$ and $\eta $ fixed in $\left( \omega_T , \frac{\pi}{2} \right)$. Note that, since $-T\in$ Sect\,$(\omega_T)$ with $\omega_T \in \left[0,\frac{\pi}{2}\right)$, then from \cite{haase}, Proposition 3.1.2, p. 63, we have $T^2 \in$ Sect\,$(2\omega_T)$. Moreover, since $0\in\rho(T)$, then $0 \in \rho\left(T^2 + 4 \pi^2\xi^2 I\right)$.
\end{proof}

\begin{Prop}\label{Prop A+zI in Sect}
Let $\omega_T \in [0,\pi)$ and $z \in \CC\setminus\{0\}$ fixed such that $|\arg(z)| + \omega_T < \pi$. Then
$$T \in \text{Sect}(\omega_T) \Longrightarrow T + z I \in \text{Sect}(\max(\omega_T,|\arg(z)|)).$$
\end{Prop}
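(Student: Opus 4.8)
\textbf{Proof plan for Proposition \ref{Prop A+zI in Sect}.}

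The plan is to verify directly the two defining properties of a sectorial operator from \refD{Def op Sect} for $T + zI$, with the claimed angle $\omega := \max(\omega_T,|\arg(z)|)$. First I would check the spectral inclusion: since $\sigma(T)\subset\overline{S_{\omega_T}}$, we have $\sigma(T+zI) = z + \sigma(T) \subset z + \overline{S_{\omega_T}}$, and the geometric assumption $|\arg(z)| + \omega_T < \pi$ guarantees that the translated sector $z + \overline{S_{\omega_T}}$ lies inside $\overline{S_\omega}$ (the point $z$ itself has argument $|\arg(z)| \le \omega$, and adding elements of $\overline{S_{\omega_T}}$ can only "pull" arguments toward that of $z$ or toward $0$, never past $\omega$ in modulus, because the total angular spread stays below $\pi$). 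So property $i)$ holds.

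Next, for the resolvent estimate, fix $\omega' \in (\omega,\pi)$ and take $\lambda \notin \overline{S_{\omega'}}$. Write $(T + zI - \lambda I)^{-1} = (T - (\lambda - z)I)^{-1}$, so I need a uniform bound on $\|\lambda (T - (\lambda-z)I)^{-1}\|_{\L(X)}$. Setting $\mu := \lambda - z$, sectoriality of $T$ at angle $\omega_T$ gives, for any $\omega_T' \in (\omega_T,\pi)$, a bound $\|\mu(T-\mu I)^{-1}\|_{\L(X)} \le M$ provided $\mu \notin \overline{S_{\omega_T'}}$. The key point is then twofold: (a) one must choose $\omega_T'$ so that $\lambda \notin \overline{S_{\omega'}}$ forces $\mu = \lambda - z \notin \overline{S_{\omega_T'}}$ — this is again a purely geometric fact, using that $z$ is a fixed point with $|\arg z| \le \omega < \omega'$, so a suitable $\omega_T' \in (\omega_T, \pi)$ can be found (intuitively, translating the complement of a wide sector by a bounded vector $z$ keeps it inside the complement of a slightly narrower wide sector, away from a neighbourhood of the origin); and (b) one must convert the bound on $\|\mu(T-\mu I)^{-1}\|$ into a bound on $\|\lambda(T-\mu I)^{-1}\|$, which amounts to showing $|\lambda|/|\mu| = |\lambda|/|\lambda - z|$ stays bounded for $\lambda \notin \overline{S_{\omega'}}$. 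For this I would invoke \refL{Lem Lab-Ter} (or \refP{Prop DoreLabbas} directly) with $\theta_0 = \omega'$: writing $\lambda - z = \lambda + (-z)$, since $-z$ has argument $\pi - |\arg z|$ relative to... more cleanly, apply \refL{Lem Lab-Ter} in the form $|\lambda - z| \ge C|\lambda|$ valid for $\lambda$ in the complement of $\overline{S_{\omega'}}$ and $z$ fixed, giving $|\lambda|/|\lambda-z| \le 1/C$. Combining, $\|\lambda(T+zI-\lambda I)^{-1}\| \le (1/C)\,M$, a bound independent of $\lambda$, which is property $ii)$.

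The main obstacle I anticipate is purely the bookkeeping of angles in step (a): one must exhibit, for each given $\omega' \in (\omega,\pi)$, an explicit intermediate angle $\omega_T' \in (\omega_T,\pi)$ together with a small exclusion radius around $0$, such that $\lambda \in \CC\setminus\overline{S_{\omega'}}$ implies $\lambda - z \in \CC\setminus\overline{S_{\omega_T'}}$; this uses crucially that $z$ is \emph{fixed} (the constant $C$ and the angle $\omega_T'$ are allowed to depend on $z$) and that $|\arg z| + \omega_T < \pi$, which ensures there is genuine angular room. Near $\lambda = 0$ one also needs $0 \in \rho(T+zI)$, i.e. $-z \in \rho(T)$, which holds because $-z \notin \overline{S_{\omega_T}}$ (as $|\arg(-z)| = \pi - |\arg z| > \omega_T$ by the hypothesis), so the resolvent is in fact bounded in a full neighbourhood of the origin and the case distinction $|\lambda|$ small versus large causes no trouble. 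Everything else is a routine application of \refL{Lem Lab-Ter} and Definition \ref{Def op Sect}.
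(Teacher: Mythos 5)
Your plan is correct and follows essentially the same route as the paper: the spectral inclusion via the max-of-arguments estimate for $z+\mu$, and the resolvent bound via the identity $(T+zI-\lambda I)^{-1}=(T-(\lambda-z)I)^{-1}$ combined with a lower bound $|\lambda-z|\geqslant C|\lambda|$ obtained from Proposition~\ref{Prop DoreLabbas}. The one point you flag as an obstacle --- producing $\omega_T'\in(\omega_T,\pi)$ with $\lambda-z\notin\overline{S_{\omega_T'}}$ whenever $\lambda\notin\overline{S_{\omega'}}$ --- is settled by the same max-of-arguments inequality (choose $\omega_T'\in(\omega_T,\min(\omega',\pi-|\arg z|))$, so that $\overline{S_{\omega_T'}}+z\subset\overline{S_{\omega'}}$); the paper in fact asserts this membership without comment, so your version is if anything slightly more careful.
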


\begin{proof} 
If $\mu \in \overline{S_{\omega_T}}$, then since $\omega_T + |\arg(z)| < \pi$, from Proposition 13, p. 8 in \cite{DLMMT}, we have
$$\begin{array}{lll}
|\arg(z+\mu)| &\leqslant & \dis \left|\max\left(\arg(z), \arg(\mu)\right)\right| \leqslant \max\left(|\arg(z)|, |\arg(\mu)|\right) \\ \ecart
&\leqslant & \dis \max\left(|\arg(z)|, \omega_T \right) < \pi.
\end{array}$$
Therefore $\sigma(T+zI) \subset \overline{S_{\max(|\arg(z)|, \omega_T)}}$.

For all $\omega' \in (\max\left(|\arg(z)|, \omega_T\right),\pi)$, if $\lambda \notin \overline{S_{\omega'}}$, then $\lambda \neq 0$ and 
$$\max(|\arg(z)|,\omega_T) < \omega' < |\arg(\lambda)|.$$ 
Moreover, we have
$$\lambda \in \rho(T+zI) \quad \text{and} \quad \lambda - z \in \rho(T).$$
Thus, there exists $M > 0$ such that 
$$\left\|(\lambda-z)(T + zI - \lambda I)^{-1}\right\|_{\L(X)} = \left\|(\lambda-z)(T - (\lambda-z)I)^{-1}\right\|_{\L(X)} \leqslant M,$$
Moreover, from \refP{Prop DoreLabbas}, we have
$$|\lambda - z| \geqslant \left(|\lambda|+|z|\right)\left|\cos\left(\frac{\arg(\lambda) - \arg(-z))}{2}\right)\right| > |\lambda| \left|\cos\left(\frac{\arg(\lambda) - \arg(-z)}{2}\right)\right|,$$ 
and
$$\begin{array}{lll}
\dis\left|\cos\left(\frac{\arg(\lambda) - \arg(-z)}{2}\right)\right| &=& \dis \left|\cos\left(\frac{\arg(\lambda) - \arg(z) \pm \pi}{2}\right)\right| \medskip \\ 
&=& \dis \left|\sin\left(\frac{\arg(\lambda) - \arg(z)}{2}\right)\right| \medskip \\
&=& \dis \sin\left(\frac{|\arg(\lambda) - \arg(z)|}{2}\right).
\end{array}$$
Furthermore, since $|\arg(\lambda)| > \omega' > |\arg(z)|$ and $|\arg(z)| < \pi - \omega_T$, we obtain that
$$|\arg(\lambda) - \arg(z)| \geqslant \left||\arg(\lambda)| - |\arg(z)|\right| \geqslant \omega' - |\arg(z)| > 0,$$
and
$$|\arg(\lambda) - \arg(z)| \leqslant |\arg(\lambda)| + |\arg(z)| < \pi + \omega' < 2 \pi.$$
Then
$$0 < \frac{\omega' - |\arg(z)|}{2} < \frac{|\arg(\lambda) - \arg(z)|}{2} < \frac{\pi}{2} +\frac{\omega'}{2} < \pi,$$
and
$$\sin\left(\frac{|\arg(\lambda) - \arg(z)|}{2}\right) > \min\left(\sin\left(\frac{\omega'-|\arg(z)|}{2}\right), \sin\left(\frac{\pi}{2} + \frac{\omega'}{2}\right)\right) > 0.$$
Thus, since $\lambda \neq 0$, we have 
$$|\lambda - z| > |\lambda| \min\left(\sin\left(\frac{\omega'-|\arg(z)|}{2}\right), \sin\left(\frac{\pi}{2} + \frac{\omega'}{2}\right)\right) > 0.$$
Finally, setting 
$$C = \frac{M}{\min\left(\sin\left(\frac{\omega'-|\arg(z)|}{2}\right), \sin\left(\frac{\pi}{2} + \frac{\omega'}{2}\right)\right)} > 0,$$
we obtain
$$\left\|(A+zI - \lambda I)^{-1}\right\|_{\L(X)} \leqslant \frac{M}{|\lambda - z|} < \frac{C}{|\lambda|}.$$
Therefore $A+zI \in \text{Sect}(\max(|\arg(z)|, \omega_T))$.
\end{proof}

\begin{Cor}\label{Cor A+zI in Sect}
Let $\omega_T \in (0,\pi)$ and $z \in \CC\setminus\{0\}$ fixed such that $|\arg(z)| + \omega_T < \pi$. Then
$$T \in \text{Sect}(\omega_T) \Longrightarrow T + z I \in \text{Sect}(\max(\omega_T,\pi-\omega_T)).$$
Moreover for all $\lambda \in \CC \setminus \overline{S_{\max(\omega_T, \pi - \omega_T)}}$, we have
$$\left\|\left(T + zI - \lambda I \right)^{-1} \right\|_{\L(X)} \leqslant \frac{C}{|\lambda|},$$
where $C > 0$ does not depends on $z$ and $\lambda$.
\end{Cor}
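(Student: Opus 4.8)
The statement is a repackaging of \refP{Prop A+zI in Sect}: one only has to enlarge the sector of the conclusion to $\theta^{*}:=\max(\omega_T,\pi-\omega_T)$ and to check that the resolvent constant produced in its proof can be chosen independently of $z$ and $\lambda$. Accordingly the plan is in two steps: deduce the sectoriality from \refP{Prop A+zI in Sect} by monotonicity of the classes $\text{Sect}(\cdot)$, then revisit the final chain of inequalities in the proof of \refP{Prop A+zI in Sect} after freezing the auxiliary angle appearing there.

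For the first step, the hypothesis $|\arg(z)|+\omega_T<\pi$ gives $|\arg(z)|<\pi-\omega_T\leqslant\theta^{*}$, and trivially $\omega_T\leqslant\theta^{*}$, hence $\max(\omega_T,|\arg(z)|)\leqslant\theta^{*}$. By \refP{Prop A+zI in Sect}, $T+zI\in\text{Sect}(\max(\omega_T,|\arg(z)|))$; since a sectorial operator of angle $\alpha$ is again sectorial of every angle $\beta\in[\alpha,\pi)$ (enlarging the sector only weakens both the spectral inclusion and the resolvent bound), we obtain $T+zI\in\text{Sect}(\theta^{*})$.

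For the second step I would fix once and for all an auxiliary angle $\omega'\in\bigl(\max(\omega_T,|\arg(z)|),\pi\bigr)$ — when $\omega_T<\pi/2$ one may even take $\omega'<\theta^{*}$, so that $\overline{S_{\omega'}}\subseteq\overline{S_{\theta^{*}}}$ and the estimate of \refP{Prop A+zI in Sect} applies verbatim on the whole of $\CC\setminus\overline{S_{\theta^{*}}}$. Since the gap $\pi-\omega_T-|\arg(z)|$ is strictly positive, $\omega'$ can be chosen so that $\omega'$ and $\omega'-|\arg(z)|$ remain in fixed compact subintervals of $(0,\pi)$, depending only on $\omega_T$ and on a lower bound for that gap; then $M:=\sup\{\|\nu(T-\nu I)^{-1}\|_{\L(X)}:\nu\notin\overline{S_{\omega'}}\}$ is finite and depends only on $T$ and $\omega_T$. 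For $\lambda\notin\overline{S_{\theta^{*}}}$ one has $\lambda\in\rho(T+zI)$ by the first step and $\lambda-z\in\rho(T)$; one then reproduces the argument of \refP{Prop A+zI in Sect} (which uses only \refP{Prop DoreLabbas}) to check that $\lambda-z\notin\overline{S_{\omega'}}$ and that
$$|\lambda-z|\;\geqslant\;c\,|\lambda|,$$
where $c>0$ is the minimum that occurs at the end of that proof, bounded below uniformly in $z$ by the above choice of $\omega'$. Writing $(T+zI-\lambda I)^{-1}=(T-(\lambda-z)I)^{-1}$ and using $\|\nu(T-\nu I)^{-1}\|_{\L(X)}\leqslant M$ with $\nu=\lambda-z$ yields
$$\left\|(T+zI-\lambda I)^{-1}\right\|_{\L(X)}\;\leqslant\;\frac{M}{|\lambda-z|}\;\leqslant\;\frac{M/c}{|\lambda|},$$
so $C:=M/c$ works, depending only on $T$ and $\omega_T$.

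I expect the only real difficulty to be precisely this uniformity. In \refP{Prop A+zI in Sect} the constant is allowed to depend on $z$ through the choice of $\omega'$ and through $|\arg(z)|$; the content here is to notice that the strict inequality $|\arg(z)|<\pi-\omega_T$ leaves enough room to freeze $\omega'$, to keep the lower bound for $|\lambda-z|$, and to preserve the membership $\lambda-z\notin\overline{S_{\omega'}}$ needed to invoke the sectorial resolvent estimate of $T$ — all uniformly in $z$ (and trivially in $\lambda$). Everything else is a transcription of the proof of \refP{Prop A+zI in Sect}.
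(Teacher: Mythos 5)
Your overall route is the paper's: rerun the proof of \refP{Prop A+zI in Sect} with the target sector enlarged to $\theta^{*}=\max(\omega_T,\pi-\omega_T)$, and use $|\arg(z)|<\pi-\omega_T$ to make the Dore--Labbas lower bound $|\lambda-z|\geqslant c\,|\lambda|$ uniform in $z$. But the step on which your uniformity rests is wrong: the claim that $\lambda-z\notin\overline{S_{\omega'}}$ is false in general, and it is load-bearing, since your $M$ is defined as a supremum over $\nu\notin\overline{S_{\omega'}}$. Take $\omega_T<\pi/2$ (so $\theta^{*}=\pi-\omega_T$ and $\omega'>\pi-\omega_T>\omega_T$), $|\arg(z)|=\pi-\omega_T-\delta$ with $\delta$ small, and $|z|\gg|\lambda|$: then $\lambda-z\approx-z$, so $|\arg(\lambda-z)|$ is close to $\pi-|\arg(z)|=\omega_T+\delta$, i.e.\ $\lambda-z$ lies just outside $\overline{S_{\omega_T}}$ and well inside $\overline{S_{\omega'}}$. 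What one can actually prove (by the same inclusion $\overline{S_{\omega_T}}+z\subset\overline{S_{\max(\omega_T,|\arg(z)|)}}$ used for the spectrum) is only $\lambda-z\notin\overline{S_{\omega_T}}$, with no $z$-uniform angular margin; so the estimate $\left\|(\lambda-z)\left(T-(\lambda-z)I\right)^{-1}\right\|_{\L(X)}\leqslant M$ with $M$ independent of $z$ does not follow from your definition of $M$. (The paper's own proof simply asserts such a uniform $M$ without justification; you have correctly located the difficulty, but the justification you supply is the step that fails.)

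Two further points on the choice of $\omega'$. Saying that $\omega'$ and $\omega'-|\arg(z)|$ can be confined to compacts ``depending on a lower bound for the gap $\pi-\omega_T-|\arg(z)|$'' reintroduces a dependence on $z$: that gap has no positive lower bound over the admissible $z$. The device that actually works --- and that the paper uses --- is to fix any $\omega'\in(\theta^{*},\pi)$ and bound $|\arg(\lambda)-\arg(z)|\geqslant\omega'-|\arg(z)|>\omega'-(\pi-\omega_T)>0$, together with the upper bound $|\arg(\lambda)-\arg(z)|<2\pi-\omega_T$, both free of $z$; this is precisely why the sector must be enlarged so that $\omega'>\pi-\omega_T$. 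For the same reason your parenthetical suggestion to take $\omega'<\theta^{*}$ when $\omega_T<\pi/2$ cannot yield a $z$-uniform constant: $\omega'$ would then have to sit between $|\arg(z)|$ and $\pi-\omega_T$, and the margin $\omega'-|\arg(z)|$ collapses as $|\arg(z)|\to\pi-\omega_T$. Once $\omega'$ is frozen above $\theta^{*}$, your reproduction of the lower bound for $|\lambda-z|$ is fine; the genuine missing piece in both your argument and the paper's is the $z$-uniform resolvent estimate for $T$ at the points $\lambda-z$.
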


\begin{proof}
The proof is similar to the one of \refP{Prop A+zI in Sect}, thus we only point out the differences. From the proof of \refP{Prop A+zI in Sect}, we have
$$\sigma(T+zI) \subset \overline{S_{\max(\omega_T, |\arg(z)|)}} \subset \overline{S_{\max(\omega_T, \pi - \omega_T)}}.$$
For all $\omega' \in (\max\left(\omega_T, \pi - \omega_T\right),\pi)$, if $\lambda \notin \overline{S_{\omega'}}$, then $\lambda \neq 0$ and 
$$\max(\omega_T, |\arg(z)|) \leqslant \max(\omega_T, \pi - \omega_T) < \omega' < |\arg(\lambda)|.$$ 
Thus $\lambda - z \in \rho(T)$ and there exists $M > 0$, independent of $z$ and $\lambda$, such that 
$$\left\|(\lambda-z)(T - (\lambda-z)I)^{-1}\right\|_{\L(X)} \leqslant M.$$
Moreover, from \refP{Prop DoreLabbas}, we have
$$|\lambda - z| > |\lambda| \left|\cos\left(\frac{\arg(\lambda) - \arg(-z)}{2}\right)\right| = |\lambda| \sin\left(\frac{|\arg(\lambda) - \arg(z)|}{2}\right),$$ 
where
$$|\arg(\lambda) - \arg(z)| \geqslant \left||\arg(\lambda)| - |\arg(z)|\right| \geqslant \omega' - |\arg(z)| > \omega' - (\pi - \omega_T) > 0,$$
and
$$|\arg(\lambda) - \arg(z)| \leqslant |\arg(\lambda)| + |\arg(z)| < \pi + \pi - \omega_T < 2 \pi - \omega_T.$$
Then
$$0 < \frac{\omega'}{2} - \frac{\pi - \omega_T}{2} < \frac{|\arg(\lambda) - \arg(z)|}{2} < \pi -\frac{\omega_T}{2} < \pi,$$
and
$$\sin\left(\frac{|\arg(\lambda) - \arg(z)|}{2}\right) > \min\left(\sin\left(\frac{\omega'}{2} - \frac{\pi - \omega_T}{2}\right), \sin\left(\pi - \frac{\omega_T}{2}\right)\right) > 0.$$
Finally, setting 
$$C = \frac{M}{\min\left(\sin\left(\frac{\omega'}{2} - \frac{\pi - \omega_T}{2}\right), \sin\left(\pi - \frac{\omega_T}{2}\right)\right)} > 0,$$
which only depends on $\omega_T$, we obtain
$$\left\|(A+zI - \lambda I)^{-1}\right\|_{\L(X)} \leqslant \frac{M}{|\lambda - z|} < \frac{C}{|\lambda|}.$$
Therefore $A+zI \in \text{Sect}(\max(\omega_T, \pi - \omega_T))$.
\end{proof}

\begin{Cor}\label{Cor A inv implique A+zI inv}
Let $T\in$ Sect$(\omega_T)$ with $\omega_T \in [0,\pi)$ and $z \in \CC\setminus\{0\}$ such that $\omega_T + |\arg(z)| < \pi$. Then, we have
$$0 \in \rho(T) \Longrightarrow 0 \in \rho(T + z I).$$
\end{Cor}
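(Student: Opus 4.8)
The plan is to reduce the statement to a one–line spectral inclusion. The observation I would start from is that $0\in\rho(T+zI)$ is equivalent to $-z\in\rho(T)$, since $T+zI = T-(-z)I$; so the whole assertion amounts to showing $-z\in\rho(T)$.

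To obtain this, I would invoke that $T\in\mathrm{Sect}(\omega_T)$: by item $i)$ of \refD{Def op Sect}, $\sigma(T)\subset\overline{S_{\omega_T}}$, hence $\CC\setminus\overline{S_{\omega_T}}\subset\rho(T)$, and it suffices to check $-z\notin\overline{S_{\omega_T}}$. For that I would use the elementary identity $|\arg(-z)| = \pi-|\arg(z)|$, valid for every $z\in\CC\setminus\{0\}$ with the principal determination of the argument. Together with the hypothesis $\omega_T+|\arg(z)|<\pi$ it gives
$$|\arg(-z)| = \pi-|\arg(z)| > \omega_T,$$
so $-z\neq 0$ has argument of modulus strictly greater than $\omega_T$; when $\omega_T\in(0,\pi)$ this is precisely the statement $-z\notin\overline{S_{\omega_T}}$. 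When $\omega_T=0$ I would argue slightly differently: there $\overline{S_0}=[0,+\infty)$, and the hypothesis forces $|\arg(z)|<\pi$, i.e. $z\notin(-\infty,0]$, hence $-z\notin[0,+\infty)=\overline{S_0}$. In both cases $-z\in\rho(T)$, which is exactly $0\in\rho(T+zI)$.

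I do not expect a real obstacle: the proof is just a spectral inclusion combined with the identity $|\arg(-z)|=\pi-|\arg(z)|$. The only step calling for a line of care is the degenerate angle $\omega_T=0$, where $\overline{S_0}$ is a half-line rather than a genuine sector and the exclusion of $-z$ has to be read off from $z\notin(-\infty,0]$. One might think of deducing the corollary from \refP{Prop A+zI in Sect}, but that proposition only gives sectoriality of $T+zI$ together with a resolvent bound that degenerates as $\lambda\to 0$, so it does not by itself yield $0\in\rho(T+zI)$; hence the direct route above is the one I would follow.
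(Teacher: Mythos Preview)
Your argument is correct and in fact strictly more elementary than the paper's. You reduce the statement to the spectral inclusion $-z\notin\overline{S_{\omega_T}}$, which follows immediately from $|\arg(-z)|=\pi-|\arg(z)|>\omega_T$ (with the obvious adaptation when $\omega_T=0$). A noteworthy byproduct is that your proof never uses the hypothesis $0\in\rho(T)$: the conclusion $0\in\rho(T+zI)$ holds for every $T\in\mathrm{Sect}(\omega_T)$ and every $z$ with $\omega_T+|\arg(z)|<\pi$.

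The paper takes a different route. It does go through \refP{Prop A+zI in Sect}, circumventing the very obstacle you point out (namely that sectoriality of $T+zI$ alone does not force $0$ into the resolvent) by means of a shift: using $0\in\rho(T)$ one has a ball $\overline{B(0,r)}\subset\rho(T)$, so for a suitable $\varepsilon\in(0,r)$ the operator $T-\varepsilon I$ is still sectorial, with a slightly larger angle $\omega_T+\omega_0/2$ that still satisfies $\omega_T+\omega_0/2+|\arg(z)|<\pi$. Then \refP{Prop A+zI in Sect} makes $T-\varepsilon I+zI$ sectorial, so the negative real $-\varepsilon$ lies in its resolvent set, i.e.\ $0\in\rho(T+zI)$. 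This argument genuinely consumes the hypothesis $0\in\rho(T)$ and the machinery of \refP{Prop A+zI in Sect}, whereas your direct spectral inclusion bypasses both. Your approach is cleaner and yields a slightly stronger statement; the paper's approach has the virtue of staying within the framework of the surrounding propositions.
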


\begin{proof}
Since $\omega_T + |\arg(z)| < \pi$, there exists $\omega_0 > 0$ such that
$$\omega_0 = \pi - \omega_T - |\arg(z)|.$$
Moreover, $0 \in \rho(T)$, then there exists $r > 0$ such that
$$\overline{B(0,r)} \subset \rho(T),$$
where $B(0,r)$ is the open ball centered at $0$ of radius $r$. We set
$$\varepsilon = \frac{r}{2} \left|\cos(\omega_T) - \cos(\omega_T + \omega_0)\right|,$$ 
it follows that $\varepsilon \in (0,r)$ and
$$T - \varepsilon I \in \text{Sect}\left(\omega_T + \frac{\omega_0}{2}\right).$$
Finally, since
$$\omega_T + \frac{\omega_0}{2} + |\arg(z)| = \pi - \frac{\omega_0}{2} < \pi,$$
from \refP{Prop A+zI in Sect}, we have
$$T - \varepsilon I + z I \in \text{Sect}\left(\max\left(\omega_T + \frac{\omega_0}{2}, |\arg(z)|\right)\right),$$
hence
$$0 \in \rho(T - \varepsilon I + z I + \varepsilon I) = \rho(T+zI),$$ 
which gives the expected results.
\end{proof}

\begin{Cor}\label{Cor (A+z)^alp in Sect}
Let $\omega_T \in (0,\pi)$ and $z \in \CC\setminus\{0\}$ such that $|\arg(z)| + \omega_T < \pi$. Then, for all fixed $\alpha \in \left(0,\frac{\pi}{\max(\omega_T,\pi-\omega_T)}\right)$, we have
$$T \in \text{Sect}(\omega_T) \Longrightarrow \left(T + z I\right)^\alpha \in \text{Sect}(\alpha\max(\omega_T,\pi-\omega_T)).$$
Moreover, for all $\lambda \in \CC \setminus \overline{S_{\alpha\max(\omega_T, \pi-\omega_T)}}$, we deduce that
$$\left\|\left(\left(T+zI\right)^\alpha - \lambda I \right)^{-1} \right\|_{\L(X)} \leqslant \frac{C}{|\lambda|},$$
where $C > 0$ does not depends on $z$ and $\lambda$.
\end{Cor}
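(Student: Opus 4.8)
The plan is to reduce the whole statement to one quantitative fact about $S:=T+zI$ and then transport it to the fractional power through the standard theory, keeping careful track of constants. Put $\phi:=\max(\omega_T,\pi-\omega_T)\in(0,\pi)$. First, by \refC{Cor A+zI in Sect}, the hypotheses $T\in\text{Sect}(\omega_T)$ and $|\arg(z)|+\omega_T<\pi$ give $S\in\text{Sect}(\phi)$ with the bound
$$\left\|\left(S-\mu I\right)^{-1}\right\|_{\L(X)}\leqslant\frac{C_1}{|\mu|}\qquad\text{for every }\mu\in\CC\setminus\overline{S_{\phi}},$$
where $C_1$ depends on $\omega_T$ only; since this holds on the whole complement of $\overline{S_{\phi}}$, it controls the sectoriality constant of $S$ at \emph{every} angle in $(\phi,\pi)$ by the single number $C_1$, uniformly in $z$. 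Next, since in our setting $0\in\rho(T)$ (hypothesis $(H_2)$), \refC{Cor A inv implique A+zI inv} yields $0\in\rho(S)$, so $S$ is an invertible sectorial operator and $S^{\alpha}=(T+zI)^{\alpha}$ is a well-defined, invertible, sectorial operator for every $\alpha>0$.

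Since $\alpha\in(0,\pi/\phi)$, i.e.\ $\alpha\phi<\pi$, the classical result on fractional powers of sectorial operators --- \cite{haase}, Proposition~3.1.2, already used for $\alpha=2$ in the proof of \refL{Lem F(E)} --- applies to $S$ and shows that $S^{\alpha}$ is sectorial of angle $\alpha\phi=\alpha\max(\omega_T,\pi-\omega_T)$, which is the first assertion. For the resolvent estimate I would go back to the construction: for $\lambda\in\CC\setminus\overline{S_{\alpha\phi}}$ the resolvent $(S^{\alpha}-\lambda I)^{-1}$ is produced by a functional-calculus (Dunford) integral $\tfrac{1}{2\pi i}\int_{\Gamma}(\mu^{\alpha}-\lambda)^{-1}(\mu I-S)^{-1}\,d\mu$ over a contour $\Gamma=\partial S_{\psi'}$ with $\psi'\in(\phi,\pi)$ adapted to $\arg(\lambda)$ so that $\alpha\psi'<\min\left(\pi,|\arg(\lambda)|\right)$ and the integral converges (here $0\in\rho(S)$ is used near $\mu=0$, while the decay $O(|\mu|^{-\alpha})$ of $(\mu^\alpha-\lambda)^{-1}$ handles $\mu\to\infty$); one then bounds $|\mu^{\alpha}-\lambda|$ from below by \refP{Prop DoreLabbas}, inserts $\|(\mu I-S)^{-1}\|_{\L(X)}\leqslant C_1/|\mu|$, and performs the change of variable that rescales $|\lambda|$ to $1$. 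Carried out as in the proof of the quoted proposition --- i.e.\ exploiting the cancellation in the integral rather than a crude triangle-inequality bound, which would cost a spurious logarithmic factor --- this gives $\|(S^{\alpha}-\lambda I)^{-1}\|_{\L(X)}\leqslant C/|\lambda|$ with $C$ entering only through $\alpha$, $\phi$ and the sectoriality constant of $S$. As that constant is the \emph{same} number $C_1$ for $S=T+zI$ at all angles in $(\phi,\pi)$ and does not depend on $z$, the resulting $C$ does not depend on $z$ or $\lambda$, and the bound holds with one constant on all of $\CC\setminus\overline{S_{\alpha\phi}}$.

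The main obstacle is precisely this book-keeping of constants at the two ``bad'' parts of the contour. A plain sectoriality statement for $T+zI$ --- which one already has from \refP{Prop A+zI in Sect} --- would only yield a $z$-dependent constant; the uniform conclusion hinges on the fact that \refC{Cor A+zI in Sect} supplies one sectoriality constant for $T+zI$ valid at all angles above $\phi$ and independent of $z$ (which in particular prevents the degeneration of the estimate as $|\arg(\lambda)|\to\alpha\phi$, i.e.\ as $\psi'\to\phi$), and that \refC{Cor A inv implique A+zI inv} keeps the part of the contour near the origin under uniform control. The remaining point to check is that the argument behind \cite{haase}, Proposition~3.1.2 --- being invariant under the dilation $S\mapsto tS$, $t>0$ --- produces a resolvent bound for $S^{\alpha}$ that depends on $S$ solely through that sectoriality data, which is what propagates the $z$-uniformity.
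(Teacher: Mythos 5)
Your argument is essentially the paper's proof, which simply combines \refC{Cor A+zI in Sect} (giving the $z$-uniform sectoriality constant for $T+zI$ at angle $\max(\omega_T,\pi-\omega_T)$) with Proposition~3.1.2 of \cite{haase} on fractional powers; you have merely unpacked why the constant produced by that proposition depends on $T+zI$ only through its sectoriality constant, which is the correct justification of the $z$-independence the paper leaves implicit. The only superfluous ingredient is your appeal to $(H_2)$ and \refC{Cor A inv implique A+zI inv}: the corollary is stated for a general sectorial $T$ and the fractional power and its resolvent estimate do not require $0\in\rho(T+zI)$.
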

\begin{proof}
The result follows from \refC{Cor A+zI in Sect} and Proposition 3.1.2, p. 63 in \cite{haase}.
\end{proof}

\begin{Lem}\label{Lem T^alp e^(tT) borné}
Let $T \in$ Sect$(\omega_T)$, with $\omega_T \in (0,\pi)$, $\lambda \in \CC\setminus\{0\}$ such that $|\arg(\lambda)| + \omega_T < \pi$ and $\alpha \in \left(0, \frac{\pi}{2\max\left(\omega_T, \pi - \omega_T\right)}\right)$. We set
$$T_\lambda = -\left(T + \lambda I\right)^\alpha.$$
Then, $T_\lambda$ is well defined and generates an analytic semigroup; moreover for all $t>0$ and $k>0$, there exists $C>0$, independent of $\lambda$, such that
$$\left\|T_\lambda^k e^{tT_\lambda}\right\|_{\L(X)} \leqslant C.$$
\end{Lem}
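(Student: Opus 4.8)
The plan is to derive everything from the sectoriality of $T+\lambda I$ that has already been established, together with the standard theory of analytic semigroups generated by sectorial operators. First I would invoke \refC{Cor A+zI in Sect} with $z = \lambda$: since $\omega_T + |\arg(\lambda)| < \pi$, we get $T + \lambda I \in \text{Sect}(\max(\omega_T, \pi - \omega_T))$, with a resolvent bound that is \emph{uniform in $\lambda$}. Next, because $\alpha \in \bigl(0, \frac{\pi}{2\max(\omega_T,\pi-\omega_T)}\bigr)$, we have in particular $\alpha\max(\omega_T,\pi-\omega_T) < \pi/2$, so by \refC{Cor (A+z)^alp in Sect} (fractional powers of sectorial operators, i.e. \cite{haase}, Prop.~3.1.2) the operator $(T+\lambda I)^\alpha$ is well defined and lies in $\text{Sect}\bigl(\alpha\max(\omega_T,\pi-\omega_T)\bigr)$ with sectoriality angle strictly less than $\pi/2$, and again the resolvent estimate $\|((T+\lambda I)^\alpha - \mu I)^{-1}\|_{\L(X)} \le C/|\mu|$ holds with $C$ independent of $\lambda$. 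Hence $T_\lambda = -(T+\lambda I)^\alpha$ is the negative of a sectorial operator of angle $< \pi/2$, which is exactly the condition for $T_\lambda$ to generate a bounded analytic semigroup $(e^{tT_\lambda})_{t\ge0}$; here I would also note that $0 \in \rho(T)$ gives, via \refC{Cor A inv implique A+zI inv}, $0 \in \rho(T+\lambda I)$ and therefore $0 \in \rho((T+\lambda I)^\alpha)$, so the semigroup is in fact exponentially stable.

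The estimate $\|T_\lambda^k e^{tT_\lambda}\|_{\L(X)} \le C$ is then obtained by the classical Dunford/Cauchy integral representation. I would write, for $t>0$,
\begin{equation*}
T_\lambda^k e^{tT_\lambda} = \frac{1}{2i\pi}\int_\gamma \mu^k e^{t\mu}\,(\mu I - T_\lambda)^{-1}\,d\mu,
\end{equation*}
where $\gamma$ is a suitable sectorial contour surrounding the spectrum of $T_\lambda$ (which sits in a sector of half-angle $\pi - \alpha\max(\omega_T,\pi-\omega_T) > \pi/2$ around the negative real axis, bounded away from $0$). Using the uniform resolvent bound $\|(\mu I - T_\lambda)^{-1}\| \le C/|\mu|$ and parametrizing $\gamma$ by $|\mu| = \rho$, the integrand is bounded by $C\rho^{k-1}e^{-c\rho t}$ for some $c>0$; the change of variable $\rho = \tau/t$ then yields $\|T_\lambda^k e^{tT_\lambda}\|_{\L(X)} \le C t^{-k}\int_0^\infty \tau^{k-1}e^{-c\tau}\,d\tau$, i.e. the $O(t^{-k})$ bound familiar for analytic semigroups. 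To upgrade this to a bound uniform in $t$ as well, I would combine it with exponential decay: since $0 \in \rho(T_\lambda)$ and $T_\lambda$ generates a bounded analytic semigroup, there are $M, c > 0$ with $\|T_\lambda^k e^{tT_\lambda}\| \le M e^{-ct}$ for $t \ge 1$ (obtained by shifting the contour $\gamma$ slightly into the left half-plane, which is possible because the sector containing $\sigma(T_\lambda)$ opens past the imaginary axis); for $t \in (0,1]$ one simply writes $T_\lambda^k e^{tT_\lambda} = (T_\lambda^k e^{\frac{t}{2}T_\lambda})\,e^{\frac{t}{2}T_\lambda}$ and... actually more directly, the key point is that the constant in all these estimates depends only on the sectoriality angle and the uniform resolvent constant $C$ from \refC{Cor (A+z)^alp in Sect}, neither of which depends on $\lambda$.

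The only genuinely delicate point is tracking the uniformity in $\lambda$: every constant that enters the contour integral estimate must be traced back to quantities that \refC{Cor A+zI in Sect} and \refC{Cor (A+z)^alp in Sect} certify to be $\lambda$-independent, namely the sectoriality angle $\alpha\max(\omega_T,\pi-\omega_T)$ (which does not involve $\lambda$ at all) and the resolvent constant $C$. The contour $\gamma$ can be chosen once and for all, depending only on the angle, and the exponential decay rate $c$ likewise depends only on the aperture of the spectral sector; since $\sigma(T_\lambda) \subset \overline{S_{\alpha\max(\omega_T,\pi-\omega_T)}}$ rotated to the negative axis and $0 \in \rho(T_\lambda)$ with a uniform bound on $\|T_\lambda^{-1}\|$ (again from the uniform resolvent estimate evaluated near $\mu = 0$), the distance from $\sigma(T_\lambda)$ to $i\RR$ is bounded below uniformly in $\lambda$. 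Thus no constant depends on $\lambda$, and the claimed bound follows. I expect the bookkeeping of these uniformity claims, rather than any conceptual difficulty, to be the main thing to get right.
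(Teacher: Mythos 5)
Your proposal follows essentially the same route as the paper: \refC{Cor A+zI in Sect}, \refC{Cor A inv implique A+zI inv} and \refC{Cor (A+z)^alp in Sect} give $\lambda$-uniform sectoriality of $-T_\lambda$ with angle $\alpha\max(\omega_T,\pi-\omega_T)<\pi/2$ together with $0\in\rho(T_\lambda)$, and the bound is then obtained from the Dunford integral over a sectorial contour using the $\lambda$-independent resolvent constant, exactly as in the paper. The only caveat is that your closing digression upgrading the estimate to be uniform in $t$ is not required by the statement (the constant may depend on $t$ and $k$) and leans on a uniform-in-$\lambda$ lower bound for the distance from $\sigma(T_\lambda)$ to $0$ that the quoted corollaries do not actually certify (their resolvent estimate is only valid outside the spectral sector, not near $\mu=0$); dropping that paragraph leaves a proof matching the paper's.
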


\begin{proof}
From \refC{Cor (A+z)^alp in Sect}, $-T_\lambda$ is a sectorial operator of angle $\omega_{T_\lambda} = \alpha\max(\omega_T, \pi - \omega_T) < \pi/2$. Since $\lambda \neq 0$, then due to \refC{Cor A inv implique A+zI inv}, we have $0 \in \rho(T+\lambda I)$. From \refC{Cor A+zI in Sect} and Proposition~3.1.1, e), p. 61-62 in \cite{haase}, it follows that $0 \in \rho(T_\lambda)$. Thus there exists $r>0$ such that $\overline{B(0,r)} \subset \rho(T_\lambda)$. Then, by functional calculus, we have
$$\begin{array}{lll}
\dis T_\lambda^k e^{tT_\lambda} &=& \dis \frac{1}{2i\pi} \int_{\Gamma_{T_\lambda}} (-z)^k e^{-t z} \left(-T_\lambda - zI\right)^{-1}dz \\ \\
&=& \dis \frac{1}{2i\pi} \int_r^{+\infty} |z|^k e^{ik\omega_{T_\lambda}} e^{-t|z|e^{i\omega_{T_\lambda}}} \left(-T_\lambda - |z|e^{i\omega_T}I\right)^{-1} e^{i\omega_T}~d|z| \\ \ecart
&& \dis + \frac{1}{2i\pi} \int_{-\omega_{T_\lambda}}^{\omega_{T_\lambda}} r^ke^{ik\theta} e^{-trke^{i\theta}} \left(-T_\lambda - re^{i\theta}I\right)^{-1} ire^{i\theta}~d\theta \\ \ecart
&& \dis - \frac{1}{2i\pi} \int_r^{+\infty} |z|^k e^{-ik\omega_{T_\lambda}} e^{-t|z|e^{-i\omega_{T_\lambda}}} \left(-T_\lambda - |z|e^{-i\omega_{T_\lambda}}I\right)^{-1} e^{-i\omega_{T_\lambda}}~d|z|,
\end{array}$$
where $\Gamma_{T_\lambda}$ is a sectorial curve, positively oriented, of angle $\omega_{T_\lambda}$ which avoids $0$ at distance $r$.

From \refC{Cor (A+z)^alp in Sect}, there exists $C>0$, independent of $\lambda$, such that
$$\begin{array}{lll}
\dis \left\|T_\lambda^k e^{tT_\lambda}\right\|_{\L(X)} &\leqslant & \dis \frac{1}{\pi} \int_r^{+\infty} |z|^k e^{-t|z|\cos(\omega_{T_\lambda})} \frac{C}{|z|}~d|z| + \frac{1}{2\pi} \int_{-\omega_{T_\lambda}}^{\omega_{T_\lambda}} Cr^k e^{-trk\cos(\theta)} ~d\theta \\ \ecart

& \leqslant & \dis \frac{C}{\pi} \int_r^{+\infty} |z|^{k-1} e^{-t|z|\cos(\omega_{T_\lambda})} ~d|z| + \frac{Cr^k\omega_{T_\lambda}}{\pi}.
\end{array}$$
By successive integration by parts, there exists $C'>0$, independent of $\lambda$, such that
$$\begin{array}{lcl}
\dis \int_r^{+\infty} |z|^{k-1} e^{-t|z|\cos(\omega_{T_\lambda})} ~d|z| &=& \dis \left[ \frac{e^{-t|z|\cos(\omega_{T_\lambda})}}{-t\cos(\omega_{T_\lambda})} |z|^{k-1}\right]_r^{+\infty} - \int_r^{+\infty} \frac{e^{-t|z|\cos(\omega_{T_\lambda})}}{-t\cos(\omega_{T_\lambda})} (k-1)|z|^{k-2} ~d|z| \\ 
&\vdots& \\
& = & \dis C' + (-1)^{k-1} \int_r^{+\infty} \frac{e^{-t|z|\cos(\omega_{T_\lambda})}}{\left(-t\cos(\omega_{T_\lambda})\right)^{k-1}} (k-1)! ~d|z| \\ \ecart
& = & \dis C' + (-1)^{k-1} \left[ \frac{e^{-t|z|\cos(\omega_{T_\lambda})}}{\left(-t\cos(\omega_{T_\lambda})\right)^{k}}\right]_r^{+\infty} (k-1)! < +\infty,
\end{array}$$
which concluded the proof.
\end{proof}

\begin{Lem}\label{Lem Amann}
Let $T \in$ Sect$\,(\omega_T)$ with $\omega_T \in [0,\pi)$, $0 \in \rho(T)$ and $\overline{D(T)} = X$. For all $\lambda \in \rho(T)$, assume that
$$\left(T - \lambda I\right)^{-1} = R(\lambda) + S(\lambda),$$
and for Re$(z)<0$, set
$$R_z(\lambda):= \lambda^z R(\lambda).$$
Also assume that
$$R_z, S \in  L^1(\Gamma,\L(X)),$$
where $\Gamma:=\partial \overline{S_\nu}$, with $\nu \in (\omega_T,\pi)$ and for some $\varepsilon > 0$
$$\left\|\int_\Gamma R_z(\lambda)~d\lambda\right\|_{\L(X)} \leqslant M e^{\theta |\text{Im}(z)|}, \quad -\varepsilon<\text{Re}(z)<0.$$
Then, we have
$$T \in \text{BIP}\left(X,\max(\nu, \theta)\right).$$
\end{Lem}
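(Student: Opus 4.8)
The plan is to verify the two non-trivial conditions defining $\mathrm{BIP}(X,\max(\nu,\theta))$ — that $T^{is}\in\L(X)$ and that $\|T^{is}\|_{\L(X)}\leqslant C e^{\max(\nu,\theta)|s|}$ — the density conditions being automatic: since $T$ is sectorial with $0\in\rho(T)$ it is injective with $R(T)=X$, so $\overline{D(T)}=\overline{R(T)}=X$, and by Komatsu's result the imaginary powers $T^{is}$ exist as closed operators. The idea is to estimate the holomorphic complex powers $T^z$ for $z$ slightly inside the open left half-plane, where a Dunford integral over $\Gamma$ is available, and then to let $\mathrm{Re}(z)\to 0^-$.

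First I would write, for $z$ with $-1<\mathrm{Re}(z)<0$,
$$T^z \;=\; \frac{1}{2i\pi}\int_\Gamma \lambda^z\,(\lambda I - T)^{-1}\,d\lambda \;=\; -\frac{1}{2i\pi}\int_\Gamma \lambda^z\,(T - \lambda I)^{-1}\,d\lambda,$$
where $\lambda^z$ is the principal branch, holomorphic on a neighbourhood of $\overline{S_\nu}$ since $\nu<\pi$; the integral converges absolutely because on $\Gamma$ one has $\|(T-\lambda I)^{-1}\|_{\L(X)}=O(\min(1,|\lambda|^{-1}))$ (using $0\in\rho(T)$ near the vertex and sectoriality at infinity) while $|\lambda^z|=|\lambda|^{\mathrm{Re}(z)}e^{\mp\nu\,\mathrm{Im}(z)}$ on the two rays. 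Inserting the decomposition $(T-\lambda I)^{-1}=R(\lambda)+S(\lambda)$ splits this as
$$T^z \;=\; -\frac{1}{2i\pi}\int_\Gamma R_z(\lambda)\,d\lambda \;-\; \frac{1}{2i\pi}\int_\Gamma \lambda^z\,S(\lambda)\,d\lambda .$$

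Next, for $z=-\delta+is$ with $0<\delta<\varepsilon$ small and $s\in\RR$, I would estimate the two pieces. The first is handled directly by hypothesis: $\bigl\|\int_\Gamma R_z(\lambda)\,d\lambda\bigr\|_{\L(X)}\leqslant M e^{\theta|s|}$. For the second, on the two rays of $\Gamma=\partial\overline{S_\nu}$ one has $|\lambda^z|\leqslant|\lambda|^{-\delta}e^{\nu|s|}$, whence
$$\left\|\int_\Gamma \lambda^z\,S(\lambda)\,d\lambda\right\|_{\L(X)} \;\leqslant\; e^{\nu|s|}\int_\Gamma |\lambda|^{-\delta}\,\|S(\lambda)\|_{\L(X)}\,|d\lambda| \;\leqslant\; C_S\,e^{\nu|s|},$$
the last integral being finite and bounded uniformly for small $\delta$ because $S\in L^1(\Gamma,\L(X))$ (routing $\Gamma$ around $0$ first if $S$ is not already bounded near the vertex, which is legitimate since $0\in\rho(T)$ and does not change $T^z$). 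Adding the two bounds gives $\|T^{-\delta+is}\|_{\L(X)}\leqslant\frac{M+C_S}{2\pi}\,e^{\max(\nu,\theta)|s|}$, a bound independent of $\delta$.

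Finally I would pass to the limit $\delta\to 0^+$: for $x$ in the dense subspace $D(T)\cap R(T)\subset D(T^{is})$ one has $T^{-\delta+is}x=T^{-\delta}T^{is}x\to T^{is}x$, since $T^{-\delta}y\to y$ for every $y\in X$ as $\delta\to0^+$ for injective sectorial $T$; combined with the uniform bound above and the density of $D(T)\cap R(T)$, this forces $T^{is}$ to extend to a bounded operator with $\|T^{is}\|_{\L(X)}\leqslant\frac{M+C_S}{2\pi}e^{\max(\nu,\theta)|s|}$, and after enlarging the constant to be at least $1$ we conclude $T\in\mathrm{BIP}(X,\max(\nu,\theta))$. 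I expect the main obstacle to be exactly this last passage to the limit — one must carefully link the Dunford-integral powers $T^z$ for $\mathrm{Re}(z)<0$ with Komatsu's $T^{is}$ via the strong continuity $\lim_{\delta\to0^+}T^{-\delta+is}x=T^{is}x$ on a dense set, so that the uniform estimate genuinely transfers to the imaginary axis; a minor secondary technical point is the uniform-in-$\delta$ integrability of $|\lambda|^{-\delta}\|S(\lambda)\|_{\L(X)}$ near the vertex of $\Gamma$.
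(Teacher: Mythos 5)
The paper does not actually prove this lemma: it is quoted verbatim (with adapted notation) from Amann, \emph{Linear and Quasilinear Parabolic Problems I}, Lemma 4.8.4, so there is no in-paper proof to compare against. Your argument is essentially the standard one behind that lemma --- represent $T^{z}$ for $-\varepsilon<\mathrm{Re}(z)<0$ by the Dunford integral over $\Gamma$, split the resolvent into $R+S$, use the hypothesis on $\int_\Gamma R_z$ for the first piece and $|\lambda^{z}|\leqslant|\lambda|^{\mathrm{Re}(z)}e^{\nu|\mathrm{Im}(z)|}$ together with $S\in L^1(\Gamma,\L(X))$ for the second, then let $\mathrm{Re}(z)\to 0^-$ via the strong convergence $T^{-\delta}y\to y$ on the dense set $D(T)=D(T)\cap R(T)$ and the closedness of $T^{is}$ --- and it is correct. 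The one genuinely thin spot is the one you flag yourself: with only $S\in L^1(\Gamma,\L(X))$, the integral $\int_\Gamma|\lambda|^{-\delta}\|S(\lambda)\|_{\L(X)}\,|d\lambda|$ need not be finite uniformly (or even for fixed $\delta>0$) near the vertex, and the rerouting of $\Gamma$ around $0$ that repairs this must be reconciled with the hypothesis $\|\int_\Gamma R_z(\lambda)\,d\lambda\|_{\L(X)}\leqslant Me^{\theta|\mathrm{Im}(z)|}$, which is tied to the original contour; in the applications made in this paper $S$ is bounded near $0$, so this causes no harm, but as a proof of the lemma under the stated hypotheses it deserves an explicit extra line.
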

This result can be found in \cite{amann 2}, Lemma 4.8.4, p. 170, but we have adapted the notation for the reader convenience.

\begin{Lem}\label{Lem B_xi BIP inversible}
Let $r' > 0$, for all $\xi \in \RR$, we set 
\begin{equation}\label{Def B_xi}
B_{\xi} := -A\left(-A + k I + 8\pi^2\xi^2 I\right) + \left(\frac{k}{2} + 4\pi^2\xi^2 \right)^2 I + r'I.
\end{equation}
Assume that $(H_1)$, $(H_2)$, $(H_3)$ and $(H_4)$ hold. Then, we have
\begin{equation*}
B_{\xi} \in \text{BIP}\,(X,2\theta_A) \quad \text{and}\quad 0\in \rho(B_{\xi}),
\end{equation*}
such that
$$\forall \,r,\xi \in \RR, \quad B_{\xi}^{ir} \in \L(X) \quad \text{and} \quad \exists\, C \geqslant 1,~\forall\, r,\xi \in \RR, \quad \left\| B_{\xi}^{ir} \right\|_{\L(X)} \leqslant C e^{2\theta_A |r|},$$
where $C$ is independent of $\xi$ and $r$.
\end{Lem}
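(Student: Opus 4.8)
The plan is to convert the statement into one about $-A$ by an algebraic identity and then bootstrap from $(H_3)$. Completing the square in $A$ in \eqref{Def B_xi},
\[
B_{\xi}=A^{2}-(k+8\pi^{2}\xi^{2})A+\Big(\tfrac{k}{2}+4\pi^{2}\xi^{2}\Big)^{2}I+r'I=\big(-A+c_{\xi}I\big)^{2}+r'I,\qquad c_{\xi}:=\tfrac{k}{2}+4\pi^{2}\xi^{2},
\]
and, since $r'>0$, also $B_{\xi}=(-A+z_{\xi}I)(-A+\overline{z_{\xi}}\,I)$ with $z_{\xi}:=c_{\xi}+i\sqrt{r'}$, a product of two resolvent–commuting operators. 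Thus $B_{\xi}=\psi_{\xi}(-A)$ with $\psi_{\xi}(\zeta)=(\zeta+c_{\xi})^{2}+r'$, and the elementary fact driving the angle is that $\psi_{\xi}$ carries $\overline{S_{\theta_A}}$ into $\overline{S_{2\theta_A}}$: for $c_{\xi}\geqslant0$ this is immediate since $\zeta+c_{\xi}\in\overline{S_{\theta_A}}$, $(\zeta+c_{\xi})^{2}\in\overline{S_{2\theta_A}}$ and adding the positive real $r'$ does not increase arguments ($2\theta_A<\pi$); the case $c_{\xi}<0$ forces $k<0$ and occurs only for $|\xi|$ in a bounded range, where the inclusion survives provided $r'$ is taken large enough — which is exactly the role of the shift $r'I$. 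Note $c_{\xi}\geqslant k/2$ always and $c_{\xi}\to+\infty$ as $|\xi|\to\infty$.

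Invertibility I would settle first. By $(H_2)$–$(H_3)$, $\sigma(-A)\subset\overline{S_{\theta_A}}$ with $\theta_A<\pi/2$ and $\sigma(-A)$ bounded away from $0$; for $c_{\xi}\geqslant0$ the numbers $-z_{\xi},-\overline{z_{\xi}}$ lie in the closed left half–plane off the real axis, hence off $\overline{S_{\theta_A}}$, while for $c_{\xi}<0$ one uses $(H_4)$ (which gives $|c_{\xi}|\leqslant-k\in\rho(-A)$, so $0\in\rho(-A+c_{\xi}I)$) together with the imaginary part $\sqrt{r'}$ of $z_{\xi}$ — one may also quote \refC{Cor A inv implique A+zI inv}. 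Hence both factors, and so $B_{\xi}$, are boundedly invertible; and since $\sigma(B_{\xi})=\psi_{\xi}(\sigma(-A))\subset\overline{S_{2\theta_A}}$ stays at distance $\geqslant r'\min(1,\sin 2\theta_A)$ from $0$, one also gets $\sup_{\xi}\|B_{\xi}^{-1}\|_{\L(X)}<\infty$.

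For the BIP property the scheme would be: (i) prove $-A+c_{\xi}I\in$ BIP$(X,\theta_A)$, for $c_{\xi}\geqslant0$, with constant independent of $\xi$; (ii) by the power rule, $(-A+c_{\xi}I)^{2}\in$ BIP$(X,2\theta_A)$ with the same constant; (iii) conclude $B_{\xi}=(-A+c_{\xi}I)^{2}+r'I\in$ BIP$(X,2\theta_A)$. Step (i) I would run through \refL{Lem Amann}: with $\nu\in(\theta_A,\pi/2)$, $\Gamma=\partial\overline{S_{\nu}}$, split $(-A+c_{\xi}I-\lambda I)^{-1}=R(\lambda)+S(\lambda)$, $R(\lambda)=(-A-\lambda I)^{-1}$ (whose weighted integral reproduces $(-A)^{z}$, bounded by $Ce^{\theta_A|\mathrm{Im}\,z|}$ thanks to $(H_3)$) and $S(\lambda)=-c_{\xi}(-A-\lambda I)^{-1}(-A+c_{\xi}I-\lambda I)^{-1}$; step (iii) is the analogous device applied to $B_{\xi}$ (peeling off $((-A+c_{\xi}I)^{2}-\lambda I)^{-1}$) or the Dore–Venni–type sum theorem for commuting operators with bounded imaginary powers applied to the pair $(-A+c_{\xi}I)^{2}$ and $r'I$ (angles $2\theta_A$ and $0$, with $2\theta_A+0<\pi$), which is legitimate because $X$ is UMD by $(H_1)$. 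For the bounded set of $\xi$ with $c_{\xi}<0$, where $-A+c_{\xi}I$ is no longer sectorial of small angle, I would argue instead with the complex factorisation $B_{\xi}=(-A+z_{\xi}I)(-A+\overline{z_{\xi}}I)$ and the inclusion $\psi_{\xi}(\overline{S_{\theta_A}})\subset\overline{S_{2\theta_A}}$, deducing uniformity over that compact set from continuity of $\xi\mapsto B_{\xi}$.

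The hard part will be making everything uniform in $\xi$ while keeping the angle exactly $2\theta_A$. As $|\xi|\to\infty$, $(-A+c_{\xi}I)^{2}$ degenerates toward the scalar $c_{\xi}^{2}I$, and a careless treatment of steps (i) or (iii) — bounding $\|\log(I+r'(-A+c_{\xi}I)^{-2})\|$, or the $L^{1}(\Gamma)$–norm of the Amann remainder $S(\lambda)$ by crude resolvent estimates — produces constants growing like $\log c_{\xi}$ and exponential rates $2\theta_A+\varepsilon$ with $\varepsilon>0$. To beat this I would (a) exploit that $B_{\xi}-A^{2}=2c_{\xi}A-(c_{\xi}^{2}+r')I$ is of order one, not two, in $A$, so that the relevant resolvent differences gain an extra factor $|\lambda|^{-1/2}$; (b) use the $\R$–boundedness of $\{\lambda(-A+c_{\xi}I-\lambda I)^{-1}:\lambda\notin\overline{S_{\nu}},\ \xi\in\RR\}$, which follows from $(H_1)$ and $(H_3)$, to keep the Amann–type estimates $\xi$–uniform; and (c) rely on the "shifted square / conjugate product" form so that the exponential rate of $\|B_{\xi}^{ir}\|$ is governed by $\sup_{\zeta\in\overline{S_{\theta_A}}}|\arg\psi_{\xi}(\zeta)|\leqslant 2\theta_A$, rather than by a sum of the (possibly large) individual BIP angles of $-A+z_{\xi}I$ and $-A+\overline{z_{\xi}}I$.
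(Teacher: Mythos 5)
Your algebraic identity $B_{\xi}=(-A+c_{\xi}I)^{2}+r'I$ with $c_{\xi}=\frac{k}{2}+4\pi^{2}\xi^{2}$ is correct, and for the range where $c_{\xi}\geqslant 0$ your chain (positive shift preserves BIP with uniform constant, power rule doubles the angle, adding $r'I$ preserves BIP since $2\theta_A<\pi$) is sound and close in spirit to the paper. The paper groups the same polynomial differently: it writes $B_{\xi}=-A\left(-A+kI+8\pi^{2}\xi^{2}I\right)+\left(c_{\xi}^{2}+r'\right)I$, gets $-A+kI\in\mathrm{BIP}(X,\theta_A)$ from Arendt--Bu--Haase (this is where $(H_4)$ enters, since $k$ may be negative), shifts by $8\pi^{2}\xi^{2}\geqslant0$ via Pr\"uss--Sohr with a $\xi$-uniform constant, multiplies by $-A$ via Pr\"uss--Sohr's product theorem for resolvent-commuting BIP operators to reach angle $2\theta_A$, and finally adds the positive scalar $c_{\xi}^{2}+r'$. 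The advantage of that grouping is that the only shift which can be negative is the single shift by $k$, handled once by the cited theorem; all subsequent shifts are nonnegative, so uniformity in $\xi$ comes for free and none of the extra machinery in your last paragraph ($\R$-boundedness of resolvents, order-one perturbation gains) is needed.

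The genuine gap in your proposal is the case $k<0$ with $|\xi|$ small, where $c_{\xi}<0$. There your main chain breaks (as you acknowledge, $-A+c_{\xi}I$ is not obviously sectorial of angle $\theta_A$), and the two substitutes you offer do not work. First, ``the inclusion survives provided $r'$ is taken large enough'' is not available: the lemma asserts the conclusion for every fixed $r'>0$, and in the proof of Theorem 3.1 the value of $r'$ is imposed by \cite{LMT}, not at your disposal. Second, ``deducing uniformity over that compact set from continuity of $\xi\mapsto B_{\xi}$'' proves nothing: BIP membership and the bound $\|B_{\xi}^{ir}\|_{\L(X)}\leqslant Ce^{2\theta_A|r|}$ are not stable under any naive notion of continuity of the symbol, and a spectral inclusion $\psi_{\xi}(\overline{S_{\theta_A}})\subset\overline{S_{2\theta_A}}$ (which is itself delicate for $c_{\xi}<0$) would at best address sectoriality, not the boundedness of the imaginary powers. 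The repair is the same device the paper uses for the shift by $k$: since $c_{\xi}\geqslant k/2>k$ when $k<0$, hypothesis $(H_4)$ gives $c_{\xi}\in\rho(A)$, so $-A+c_{\xi}I$ is invertible and Arendt--Bu--Haase Theorem 2.3 yields $-A+c_{\xi}I\in\mathrm{BIP}(X,\theta_A)$; one must then still argue that the resulting constant is uniform over the bounded range of $\xi$ involved, which is exactly the bookkeeping the paper's grouping avoids. As written, your proof does not establish the lemma for $k<0$.
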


\begin{proof}
From \cite{arendt-bu-haase}, Theorem 2.3, p. 69, $-A + kI \in$ BIP$\,(X,\theta_A)$ and due to \cite{pruss-sohr}, Theorem 3, p. 437, we have 
$$-A + k I + 8\pi^2\xi^2 I \in \text{BIP}\,(X,\theta_A),$$
such that
$$\left\{\begin{array}{l}
\forall \,r \in \RR, \quad \left(-A + kI + 8\pi^2\xi^2 I\right)^{ir} \in \L(X) \\ \ecart
\exists\, C \geqslant 1,~\forall\, r, \xi \in \RR, \quad \left\| \left(-A + kI + 8\pi^2\xi^2 I\right)^{ir} \right\|_{\L(X)} \leqslant C e^{\theta_A |r|},
\end{array}\right.$$
where $C$ is independent of $\xi$ and $r$. Moreover, since $k + 8\pi^2\xi^2 \in [k,+\infty)$ and $-A \in$ BIP\,$(X,\theta_A)$, from \cite{arendt-bu-haase}, Theorem 2.3, p. 69, $0\in \rho(-A + k I + 8\pi^2\xi^2 I)$. Hence, due to \cite{pruss-sohr}, Corollary 3, p. 444, it follows that
$$-A\left(-A + k I + 8\pi^2\xi^2 I\right) \in \text{BIP}\,(X,2\theta_A) \quad \text{and} \quad 0\in \rho(-A(-A + k I + 8\pi^2\xi^2 I)),$$
such that
$$\left\{\begin{array}{l}
\forall \,r \in \RR, \quad \left(-A\left(-A + kI + 8\pi^2\xi^2 I\right)\right)^{ir} \in \L(X) \\ \ecart
\exists\, C \geqslant 1,~\forall\, r, \xi \in \RR, \quad \left\| \left(-A\left(-A + kI + 8\pi^2\xi^2 I\right)\right)^{ir} \right\|_{\L(X)} \leqslant C e^{2\theta_A |r|},
\end{array}\right.$$
where $C$ is independent of $\xi$ and $r$. Finally, from \cite{pruss-sohr}, Theorem 3, p. 437, we have 
$$\forall \,r,\xi \in \RR, \quad B_{\xi}^{ir} \in \L(X) \quad \text{and} \quad \exists\, C \geqslant 1,~\forall\, r,\xi \in \RR, \quad \left\| B_{\xi}^{ir} \right\|_{\L(X)} \leqslant C e^{2\theta_A |r|},$$
where $C$ is independent of $\xi$ and $r$. Moreover, due to \cite{arendt-bu-haase}, Theorem 2.3, p. 69, $0\in\rho(B_\xi)$.
\end{proof}

\begin{Lem}\label{Lem dérivée B_xi^alpha}
Let $\alpha \in \CC$ and $r' > 0$ fixed, with Re$(\alpha) \leqslant 0$. For all $\xi \in \RR$, we set
$$B_{\xi} = -A\left(-A + k I + 8\pi^2\xi^2 I\right) + \left(\frac{k}{2} + 4\pi^2\xi^2 \right)^2 I + r'I.$$
Assume that $(H_1)$, $(H_2)$, $(H_3)$ and $(H_4)$ hold. Then, we have
$$\begin{array}{lll}
\dis\frac{\partial B_{\xi}^\alpha }{\partial \xi} & = & \dis 16\alpha\pi^2\xi\left(-A + \frac{k}{2}I + 4\pi^2\xi^2 I\right) B_{\xi}^{\alpha-1}, \\ \\
\dis \frac{\partial^2 B_{\xi}^\alpha }{\partial \xi^2} & = & \dis 16\alpha\pi^2\left(-A + \frac{k}{2}I + 4\pi^2\xi^2 I\right) B_{\xi}^{\alpha-1} + 128 \alpha \pi^4\xi^2 B_\xi^{\alpha-1} \\ \ecart
&& \dis + 256\alpha(\alpha-1)\pi^4\xi^2\left(-A + \dfrac{k}{2}I + 4\pi^2\xi^2 I\right)^2 B_{\xi}^{\alpha-2}.
\end{array}$$
\end{Lem}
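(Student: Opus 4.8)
The plan is to differentiate the function $\xi \mapsto B_\xi^\alpha$ directly, treating $B_\xi$ as a holomorphic family of sectorial operators and using the integral representation of complex powers via the Dunford–Riesz functional calculus. First I would observe that, by \refL{Lem B_xi BIP inversible}, each $B_\xi$ is sectorial of angle $2\theta_A < \pi$ with $0 \in \rho(B_\xi)$, so $B_\xi^\alpha$ is well defined for $\mathrm{Re}(\alpha) \le 0$ and admits the Cauchy-type representation
$$B_\xi^\alpha = \frac{1}{2i\pi}\int_\Gamma z^\alpha \left(B_\xi - zI\right)^{-1}\,dz,$$
on a suitable sectorial contour $\Gamma$ avoiding $0$. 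The key elementary computation is that $\xi \mapsto B_\xi$ is a polynomial (indeed quadratic) in $\xi^2$ with operator coefficients: writing $C_\xi := -A + \tfrac{k}{2}I + 4\pi^2\xi^2 I$, one checks by expanding \eqref{Def B_xi} that $B_\xi = C_\xi^2 + \rho I$ for a constant $\rho$ independent of $\xi$ (namely $\rho = r' - \tfrac{k^2}{4}$ after simplification), whence $\dfrac{\partial B_\xi}{\partial \xi} = 16\pi^2 \xi\, C_\xi$ and $\dfrac{\partial^2 B_\xi}{\partial \xi^2} = 16\pi^2 C_\xi + 128\pi^4\xi^2 I$, both bounded operators.

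Next I would differentiate under the integral sign. Using $\dfrac{\partial}{\partial\xi}(B_\xi - zI)^{-1} = -(B_\xi - zI)^{-1}\dfrac{\partial B_\xi}{\partial\xi}(B_\xi - zI)^{-1}$ and the fact that $\dfrac{\partial B_\xi}{\partial\xi} = 16\pi^2\xi\, C_\xi$ commutes with $(B_\xi - zI)^{-1}$ (since $C_\xi$, being a polynomial in $A$, commutes with $B_\xi$ and hence with its resolvent), one gets
$$\frac{\partial B_\xi^\alpha}{\partial\xi} = \frac{16\pi^2\xi}{2i\pi}\int_\Gamma z^\alpha\, C_\xi\,(B_\xi - zI)^{-2}\,dz = 16\pi^2\xi\, C_\xi \cdot \frac{1}{2i\pi}\int_\Gamma z^\alpha (B_\xi - zI)^{-2}\,dz.$$
The remaining point is to identify $\dfrac{1}{2i\pi}\int_\Gamma z^\alpha (B_\xi - zI)^{-2}\,dz$ with $\alpha B_\xi^{\alpha-1}$; this follows from the composition rule of the functional calculus applied to the scalar identity $\dfrac{d}{d\lambda}\lambda^\alpha = \alpha\lambda^{\alpha-1}$, or equivalently by integrating by parts in $z$ (the boundary terms vanish thanks to the decay $|z^\alpha| |z|^{-2}$ for $\mathrm{Re}(\alpha)\le 0$ on the arcs at infinity and the integrability near $0$). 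Differentiating once more and using the second derivative of $B_\xi$ together with the product rule yields the second formula, where the $B_\xi^{\alpha-2}$ term comes from $\dfrac{d^2}{d\lambda^2}\lambda^\alpha = \alpha(\alpha-1)\lambda^{\alpha-2}$ acting on $\bigl(\partial_\xi B_\xi\bigr)^2 = 256\pi^4\xi^2 C_\xi^2$.

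The main obstacle I anticipate is the justification of differentiation under the integral sign together with the contour manipulations: one must argue uniform convergence of the difference quotients in $\L(X)$, control the resolvent norms $\|(B_\xi - zI)^{-1}\|$ uniformly along $\Gamma$ via sectoriality, and verify that the integrals $\int_\Gamma z^\alpha (B_\xi - zI)^{-2}\,dz$ and $\int_\Gamma z^\alpha (B_\xi - zI)^{-3}\,dz$ converge absolutely for $\mathrm{Re}(\alpha)\le 0$ — which they do, since near $0$ the extra resolvent powers only improve integrability (the factor $z^\alpha$ is integrable for $\mathrm{Re}(\alpha) > -1$, and harmless products with $C_\xi^j(B_\xi-zI)^{-j}$ stay bounded), and at infinity $|z|^{\mathrm{Re}(\alpha) - k}$ with $k \ge 2$ is integrable. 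Once the analyticity of $\xi \mapsto B_\xi^\alpha$ in $\L(X)$ is secured, the two formulas are a matter of bookkeeping with the commuting calculus and the scalar derivative identities.
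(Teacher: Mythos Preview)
Your approach is correct and will go through, but it takes a genuinely different route from the paper's proof. The paper represents $B_\xi^\alpha$ via the functional calculus \emph{of $A$}, not of $B_\xi$: it writes
\[
A^{-1}B_\xi^\alpha \;=\; \frac{1}{2i\pi}\int_{\Gamma_A} b_\xi(z)^\alpha\, z^{-1}(zI-A)^{-1}\,dz,
\qquad b_\xi(z) = z(z+k+8\pi^2\xi^2)+\Bigl(\tfrac{k}{2}+4\pi^2\xi^2\Bigr)^2 + r',
\]
on a contour $\Gamma_A$ around $\sigma(A)$. Here the entire $\xi$-dependence sits in the \emph{scalar} symbol $b_\xi(z)^\alpha$, so $\partial_\xi$ acts only on that factor and produces the formulas by inspection. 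The regularizing factor $A^{-1}$ (i.e.\ the $z^{-1}$ inside the integral) makes the integral absolutely convergent for all $\mathrm{Re}(\alpha)\le 0$, including the purely imaginary case, without any further argument.

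Your route via the $B_\xi$-resolvent is equally valid but trades one difficulty for another: you avoid the trick of writing $B_\xi$ as a function of $A$, but in exchange you must (i) identify $\tfrac{1}{2i\pi}\int_\Gamma z^\alpha(B_\xi-zI)^{-2}\,dz$ with $\alpha B_\xi^{\alpha-1}$ by integration by parts in $z$, and (ii) handle the case $\mathrm{Re}(\alpha)=0$ separately, since your starting representation $B_\xi^\alpha=\tfrac{1}{2i\pi}\int_\Gamma z^\alpha(B_\xi-zI)^{-1}\,dz$ is not absolutely convergent on the imaginary axis (the easiest fix is to prove the formula for $\mathrm{Re}(\alpha)<0$ and invoke continuity/analyticity in $\alpha$, or to use a regularized representation). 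You should also check that the contour $\Gamma$ and the sectoriality constants can be chosen locally uniformly in $\xi$, which follows from the proof of \refL{Lem B_xi BIP inversible}. Finally, a minor slip: with $C_\xi=-A+\tfrac{k}{2}I+4\pi^2\xi^2 I$ one has $B_\xi = C_\xi^2 + r'I$, so $\rho = r'$, not $r'-\tfrac{k^2}{4}$; this is irrelevant for the derivatives since $\rho$ is $\xi$-independent.
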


\begin{proof}
From \refL{Lem B_xi BIP inversible}, it follows that $B_\xi^\alpha \in \L(X)$ is well defined.
Since $0\in \rho(A)$, there exists $c_0 >0$, such that $\overline{B(0,c_0)} \subset \rho(A)$, such that
$$A^{-1}B_\xi^\alpha = \frac{1}{2i\pi} \int_{\Gamma_A} \left(z\left(z+k+8\pi^2\xi^2\right)+\left(\frac{k}{2} + 4\pi^2\xi^2\right)^2 + r'\right)^\alpha z^{-1}\left(zI - A \right)^{-1} dz,$$
where $\Gamma_A$ is a sectorial curve, positively oriented, of angle $\theta'_A$ which avoids 0 at distance $c_0$, where $\theta'_A $ is fixed in $\dis \left(\theta_A, \frac{\pi}{2} \right)$. Thus
$$\begin{array}{lll}
\dis A^{-1}\frac{\partial B_{\xi}^\alpha }{\partial \xi} & = & \dis  \frac{\partial}{\partial \xi}\left(\frac{1}{2i\pi} \int_{\Gamma_A} \left(z\left(z+k+8\pi^2\xi^2\right)+\left(\frac{k}{2} + 4\pi^2\xi^2\right)^2 + r'\right)^{\alpha} z^{-1}\left(zI - A \right)^{-1} dz \right) \\ \ecart

& = & \dis \frac{1}{2i\pi} \int_{\Gamma_A}  \frac{\partial}{\partial \xi}\left(z\left(z+k+8\pi^2\xi^2\right)+\left(\frac{k}{2} + 4\pi^2\xi^2\right)^2 + r'\right)^{\alpha} z^{-1}\left(zI - A \right)^{-1} dz \\ \ecart

& = & \dis \frac{16\alpha\pi^2\xi}{2i\pi} \int_{\Gamma_A} \dfrac{z + \dfrac{k}{2} + 4\pi^2\xi^2}{\left(z\left(z+k+8\pi^2\xi^2\right)+\left(\dfrac{k}{2} + 4\pi^2\xi^2\right)^2 + r'\right)^{1-\alpha}} z^{-1}\left(zI - A \right)^{-1} dz \\ \ecart

& = & \dis 16\alpha\pi^2\xi\left(-A + \frac{k}{2}I + 4\pi^2\xi^2 I\right)A^{-1}B_\xi^{\alpha-1},
\end{array}$$
hence
$$\frac{\partial B_{\xi}^\alpha }{\partial \xi} = 16\alpha\pi^2\xi\left(-A + \frac{k}{2}I + 4\pi^2\xi^2 I\right) B_{\xi}^{\alpha-1}.$$
In the same way, we have
$$\frac{\partial^2 B_{\xi}^\alpha }{\partial \xi^2} = 16\alpha\pi^2\left(-A + \frac{k}{2}I + 4\pi^2\xi^2 I\right) B_{\xi}^{\alpha-1} + 16\alpha\pi^2\xi\frac{\partial }{\partial \xi}\left(\left(-A + \dfrac{k}{2}I + 4\pi^2\xi^2 I\right) B_{\xi}^{\alpha-1}\right),$$
and setting $\delta_\alpha(\xi) = \dfrac{\partial}{\partial \xi}\left(\left(-A + \dfrac{k}{2}I + 4\pi^2\xi^2 I\right) B_{\xi}^{\alpha-1}\right)$, we obtain that
$$\begin{array}{lll}
A^{-1}\delta_\alpha(\xi) & = & \dis \frac{1}{2i\pi} \int_{\Gamma_A} \frac{\partial}{\partial \xi}\left(\dfrac{z + \dfrac{k}{2} + 4\pi^2\xi^2}{\left(z\left(z+k+8\pi^2\xi^2\right)+\left(\dfrac{k}{2} + 4\pi^2\xi^2\right)^2 + r'\right)^{1-\alpha}}\right) z^{-1}\left(zI - A \right)^{-1} dz \\ \\

& = & \dis \frac{1}{2i\pi} \int_{\Gamma_A} 8\pi^2\xi \left(z\left(z+k+8\pi^2\xi^2\right)+\left(\dfrac{k}{2} + 4\pi^2\xi^2\right)^2 + r'\right)^{\alpha-1} z^{-1}\left(zI - A \right)^{-1} dz \\ \ecart

&& + \dis \frac{\alpha-1}{2i\pi} \int_{\Gamma_A} \dfrac{\left(z + \dfrac{k}{2} + 4\pi^2\xi^2\right)\left(16\pi^2 \xi z + 8k\pi^2\xi + 64\pi^4\xi^3 \right)}{\left(z\left(z+k+8\pi^2\xi^2\right)+\left(\dfrac{k}{2} + 4\pi^2\xi^2\right)^2 + r'\right)^{2-\alpha}} z^{-1}\left(zI - A \right)^{-1} dz \\ \\

& = & \dis 8\pi^2\xi A^{-1} B_\xi^{\alpha-1}  \\ \ecart
&& \dis + \frac{(\alpha-1)16\pi^2\xi}{2\pi} \int_{\Gamma_A} \dfrac{\left(z + \dfrac{k}{2} + 4\pi^2\xi^2\right)^2}{\left(z\left(z+k+8\pi^2\xi^2\right)+\left(\dfrac{k}{2} + 4\pi^2\xi^2\right)^2 + r'\right)^{2-\alpha}} z^{-1}\left(zI - A \right)^{-1} dz \\ \\

& = & \dis  8\pi^2\xi A^{-1} B_\xi^{\alpha-1} + (\alpha-1)16\pi^2\xi\left(-A + \dfrac{k}{2}I + 4\pi^2\xi^2 I\right)^2 A^{-1} B_{\xi}^{\alpha-2},
\end{array}$$
hence
\begin{equation}\label{delta_alpha}
\delta_\alpha(\xi) = 8\pi^2\xi B_\xi^{\alpha-1} + (\alpha-1)16\pi^2\xi\left(-A + \dfrac{k}{2}I + 4\pi^2\xi^2 I\right)^2 B_{\xi}^{\alpha-2},
\end{equation}
and
$$\frac{\partial^2 B_{\xi}^\alpha }{\partial \xi^2} = 16\alpha\pi^2\left(-A + \frac{k}{2}I + 4\pi^2\xi^2 I\right) B_{\xi}^{\alpha-1} + 16\alpha\pi^2\xi \delta_\alpha(\xi).$$
\end{proof}

\begin{Prop} \label{Prop m(xi) et xi m'(xi) R-bornés}
Let $r\in\RR$. Assume that ($H_1$), ($H_2$), ($H_3$) and ($H_4$) hold. The set of bounded operators
$$\left\{B_\xi^{ir} : \xi \in \RR \setminus \{0\}\right\} \quad \text{and} \quad \left\{\xi \frac{\partial B_\xi^{ir}}{\partial \xi} : \xi \in \RR \setminus \{0\}\right\},$$
where $B_\xi$ is defined in \eqref{Def B_xi}, are $\mathcal{R}$-bounded.
\end{Prop}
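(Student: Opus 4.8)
Since $B_\xi$ depends on $\xi$ only through $\xi^2$, both families are invariant under $\xi\mapsto-\xi$, so it suffices to prove $\mathcal{R}$-boundedness over $\xi>0$. The plan is to reduce everything to the functional calculus of $A$. First I would record, by expanding the square, the identity
$$B_\xi \;=\; C_\xi^2 + r'I,\qquad C_\xi \;:=\; -A + \Big(\tfrac{k}{2} + 4\pi^2\xi^2\Big)I ,$$
and then, applying \refL{Lem dérivée B_xi^alpha} with $\alpha=ir$ and the partial fraction $\dfrac{c}{c^2+r'}=\dfrac{1}{2}\Big(\dfrac{1}{c-i\sqrt{r'}}+\dfrac{1}{c+i\sqrt{r'}}\Big)$, the representation
$$\xi\,\frac{\partial B_\xi^{ir}}{\partial\xi} \;=\; 8ir\pi^2\,\xi^2\Big[(\nu_\xi^+ I - A)^{-1} + (\nu_\xi^- I - A)^{-1}\Big]\,B_\xi^{ir},\qquad \nu_\xi^{\pm}:=\tfrac{k}{2}+4\pi^2\xi^2\pm i\sqrt{r'},$$
the points $\nu_\xi^\pm$ lying in $\rho(A)$ for every $\xi$ because $0\in\rho(B_\xi)$ by \refL{Lem B_xi BIP inversible}.

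The main external input I would use is that, under $(H_1)$ and $(H_3)$, $-A$ is $\mathcal{R}$-sectorial of some angle $<\pi/2$ (a $\mathrm{BIP}$ operator on a UMD space is $\mathcal{R}$-sectorial); in particular $\{\lambda(\lambda I-A)^{-1}:\mathrm{Re}\,\lambda\geqslant 0\}$, and more generally the resolvent family of $-A$ along any ray that stays off $\sigma(A)$, is $\mathcal{R}$-bounded. Alongside this I would repeatedly invoke the permanence properties of $\mathcal{R}$-boundedness: stability under finite unions, finite sums, products, multiplication by uniformly bounded scalar families, Bochner integration of an $\mathcal{R}$-bounded family against scalar measures of uniformly bounded total variation, and the elementary fact that the range of a $C^1$ (or bounded-variation) $\L(X)$-valued map on a compact interval is $\mathcal{R}$-bounded (write $M(\xi)=M(\xi_1)+\int_{\xi_1}^{\xi}M'$ and use the contraction principle).

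For $\{B_\xi^{ir}:\xi>0\}$ I would split at a large $\xi_0$. On $(0,\xi_0]$ the map $\xi\mapsto B_\xi^{ir}$ is $C^1$ by \refL{Lem dérivée B_xi^alpha}, bounded by $Ce^{2\theta_A|r|}$ thanks to \refL{Lem B_xi BIP inversible}, with derivative $O(\xi)$ near $0$; so this piece has finite $\mathcal{R}$-bound. On $[\xi_0,\infty)$, setting $\mu_\xi:=\tfrac{k}{2}+4\pi^2\xi^2>0$ and $t:=1/\mu_\xi$, one has $B_\xi^{ir}=t^{-2ir}\big((I-tA)^2+t^2r'I\big)^{ir}=t^{-2ir}\,\psi_t(-A)$ with $|t^{-2ir}|=1$ and $\psi_t(w)=\big((1+tw)^2+t^2r'\big)^{ir}$, which is, uniformly in $t\in(0,1/\mu_{\xi_0}]$, a bounded holomorphic function on a sector strictly containing $\sigma(-A)$ --- here \refL{Lem Lab-Ter} and the uniform sectoriality underlying \refL{Lem B_xi BIP inversible} provide the uniform lower bound $|(1+tw)^2+t^2r'|\geqslant c>0$ on that sector and the angular bound on $\psi_t$. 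Combining a functional-calculus representation of $\psi_t(-A)$ with the $\mathcal{R}$-bounded resolvent estimates for $-A$ and the permanence properties above would then yield a bound on $\mathcal{R}\{B_\xi^{ir}:\xi\geqslant\xi_0\}$ uniform in $\xi$. I expect this to be the main obstacle: $\partial_\xi B_\xi^{ir}$ decays only like $\xi^{-1}$, hence is not integrable over $[\xi_0,\infty)$, so the cheap ``$M'\in L^1\Rightarrow\mathcal{R}$-bounded'' route is closed and one must genuinely exploit the UMD hypothesis through the $\mathcal{R}$-sectoriality (equivalently, the $\mathcal{R}$-bounded functional calculus) of $-A$, together with the $\xi$-uniform estimates packaged in \refL{Lem B_xi BIP inversible}.

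The second family then follows from the displayed identity for $\xi\,\partial_\xi B_\xi^{ir}$: the scalar family $\{\xi^2/\nu_\xi^\pm:\xi>0\}$ is bounded, $\{\nu_\xi^\pm(\nu_\xi^\pm I-A)^{-1}:\xi\geqslant\xi_0\}$ sits inside the $\mathcal{R}$-bounded right half-plane resolvent family of $-A$, and $\{\xi^2(\nu_\xi^\pm I-A)^{-1}:0<\xi\leqslant\xi_0\}$ is the range of a $C^1$ map on a compact interval; hence $\{\xi^2(\nu_\xi^\pm I-A)^{-1}:\xi>0\}$ is $\mathcal{R}$-bounded. Composing with $\{B_\xi^{ir}:\xi>0\}$ (just shown to be $\mathcal{R}$-bounded) and using stability under products, finite sums and scalar multiplication shows that $\{\xi\,\partial_\xi B_\xi^{ir}:\xi>0\}$ is $\mathcal{R}$-bounded, which completes the proof.
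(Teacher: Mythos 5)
Your algebraic groundwork is correct and in places sharper than the paper's: the identity $B_\xi=C_\xi^2+r'I$ with $C_\xi=-A+(\tfrac{k}{2}+4\pi^2\xi^2)I$, the partial--fraction representation of $\xi\,\partial_\xi B_\xi^{ir}$, the reduction of the second family to the first via the $\mathcal{R}$-sectoriality of $-A$ (Cl\'ement--Pr\"uss, available under $(H_1)$ and $(H_3)$), and the treatment of compact $\xi$-ranges by the ``integrable derivative'' criterion of Weis (Proposition 2.5) are all sound. Your diagnosis of the decay rate is also right: in the scalar model $A=-a$, $a>0$, one computes $|\partial_\xi B_\xi^{ir}|=16\pi^2|r|\,\xi(a+\mu_\xi)\big((a+\mu_\xi)^2+r'\big)^{-1}\sim 4|r|/\xi$ with $\mu_\xi=\tfrac{k}{2}+4\pi^2\xi^2$, so $\partial_\xi B_\xi^{ir}$ is genuinely non-integrable at infinity. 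Be aware that this puts you in direct conflict with the paper, whose entire large-$\xi$ argument is the estimate $\|\partial_\xi B_\xi^{ir}\|_{\L(X)}=O(\xi^{-2})$ followed by that same integrability criterion; that estimate rests on the displayed formula for $f_\xi$, whose denominator $(z-C_\xi)(z+C_\xi+C_\xi^2+r')$ is not a factorization of the symbol of $B_\xi$ in the spectral variable of $-A+\tfrac{k}{2}I+4\pi^2\xi^2 I$ (the correct denominator is $z^2+r'$), and the scalar example shows the claimed decay cannot hold. So you were right that the cheap route is closed at infinity.

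The genuine gap is in your replacement argument for $\{B_\xi^{ir}:\xi\geqslant\xi_0\}$. Writing $B_\xi^{ir}=t^{-2ir}\psi_t(-A)$ with $\psi_t(w)=\big((1+tw)^2+t^2r'\big)^{ir}$ uniformly bounded and holomorphic on a sector is fine, but $\mathcal{R}$-sectoriality of $-A$ only provides $\mathcal{R}$-bounds for the resolvent family; passing from ``$(\psi_t)_t$ uniformly bounded in $H^\infty(S_{\theta'})$'' to ``$\{\psi_t(-A)\}_t$ is $\mathcal{R}$-bounded'' is precisely the assertion that $-A$ has an $\mathcal{R}$-bounded $H^\infty$-calculus, which does not follow from $(H_1)$--$(H_4)$: BIP does not imply a bounded $H^\infty$-calculus, and even a bounded $H^\infty$-calculus is only known to be automatically $\mathcal{R}$-bounded on spaces with property $(\alpha)$, which is strictly stronger than UMD. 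Nor can you reach the conclusion through the permanence properties you list: $\psi_t$ has no decay at $0$ or $\infty$, so the Dunford integral $\frac{1}{2i\pi}\int_\Gamma\psi_t(z)(zI+A)^{-1}dz$ is not an integral of the $\mathcal{R}$-bounded resolvent family against measures of uniformly bounded total variation, and even after factoring out the fixed operator $(-A)^{2ir}$ (using $0\in\rho(A)$ to truncate the contour near the origin) the residual symbol $h_t(w)=\big((1+\tfrac{1}{tw})^2+\tfrac{r'}{w^2}\big)^{ir}$ contributes a variation of order $\log(1/t)\sim\log\xi$, so no $\xi$-uniform bound follows. As it stands, neither your sketch nor the paper's proof closes the large-$\xi$ $\mathcal{R}$-boundedness of $\{B_\xi^{ir}\}$ from the stated hypotheses alone; a repair would require either an additional hypothesis (e.g. an $\mathcal{R}$-bounded $H^\infty$-calculus for $-A$, which does hold in the paper's application where $A_0$ is the Dirichlet Laplacian on $L^p(\omega)$ and $X$ has property $(\alpha)$) or an argument exploiting more than $\mathcal{R}$-sectoriality.
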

\begin{proof}
Let $\xi \in \RR\setminus \{0\}$. From ($H_3$), ($H_4$) and Theorem 2.3, p. 69 in \cite{arendt-bu-haase}, we have 
$$-A + \frac{k}{2}I + 4\pi^2 \xi^2 I \in \text{BIP}(X,\theta_A)\quad \text{and} \quad 0 \in \rho\left(-A + \frac{k}{2}I + 4\pi^2 \xi^2 I \right),$$ 
and due to \refL{Lem dérivée B_xi^alpha}, it follows that
$$\frac{\partial B_\xi^{ir}}{\partial \xi} = 16 ir\pi^2 \xi \left(-A + \frac{k}{2} I + 4 \pi^2\xi^2 I\right)B_\xi^{-1} B_\xi^{ir} = f_\xi\left(-A+\frac{k}{2}I + 4\pi^2\xi^2 I\right) B_\xi^{ir}.$$
where
$$f_\xi(z) = \frac{16ir\pi^2 \xi z}{\left(z-\frac{k}{2} - 4\pi^2\xi^2 \right)\left(z+\frac{k}{2} + 4\pi^2\xi^2 + \left(\frac{k}{2} + 4\pi^2\xi^2\right)^2 + r'\right)}, \quad z \in \overline{S_{\theta_A'}},~ \theta_A' \in \left(\theta_A,\frac{\pi}{2}\right).$$
Note that if $k<0$ and $\xi = \frac{\sqrt{|k|}}{2\pi\sqrt{2}}$, then $\frac{k}{2} + 4\pi^2\xi^2 = 0$. Thus, for all $k \in \RR$, we have to consider
$$\int_0^{+\infty} \left\|\frac{\partial B_\xi^{ir}}{\partial \xi}\right\|_{\L(X)} d\xi = \int_0^{1+\frac{\sqrt{|k|}}{2\pi\sqrt{2}}} \left\|\frac{\partial B_\xi^{ir}}{\partial \xi}\right\|_{\L(X)} d\xi + \int_{1+\frac{\sqrt{|k|}}{2\pi\sqrt{2}}}^{+\infty} \left\|\frac{\partial B_\xi^{ir}}{\partial \xi}\right\|_{\L(X)} d\xi.$$
Moreover, due to \refL{Lem B_xi BIP inversible}, there exists $C>0$, independant of $r$ and $\xi$, such that
$$\left\|\frac{\partial B_\xi^{ir}}{\partial \xi}\right\|_{\L(X)} \leqslant Ce^{2\theta_A |r|} \left\|\int_{\Gamma_{A}} f_\xi(z) \left(-A+\frac{k}{2}I + 4\pi^2\xi^2 I - zI\right)^{-1} dz\right\|_{\L(X)},$$
where the path $\Gamma_{A}$ is the boundary positively oriented of $S_{\theta_A'}$, with $\theta_A'$ fixed in $\left(\theta_A, \frac{\pi}{2}\right)$. Thus there exists $C_A>0$, independant of $r$ and $\xi$, such that
$$\begin{array}{lll}
\dis \left\|\frac{\partial B_\xi^{ir}}{\partial \xi}\right\|_{\L(X)} &\leqslant & \dis Ce^{2\theta_A |r|}\left\|\int_0^{+\infty} f_\xi\left(|z|e^{i\theta_A}\right) \left(-A+\frac{k}{2}I + 4\pi^2\xi^2 I - |z|e^{i\theta_A}I\right)^{-1} e^{i\theta_A}d|z|\right\|_{\L(X)} \\ \ecart
&& \dis + Ce^{2\theta_A |r|}\left\| \int_0^{+\infty} f_\xi\left(|z|e^{-i\theta_A}\right) \left(-A+\frac{k}{2}I + 4\pi^2\xi^2 I - |z|e^{-i\theta_A}I\right)^{-1} e^{-i\theta_A}d|z|\right\|_{\L(X)} \\ \\

& \leqslant & \dis Ce^{2\theta_A |r|}\int_0^{+\infty} \left|f_\xi(|z|e^{i\theta_A})\right| \frac{C_A}{1+|z|}~ d|z| + Ce^{2\theta_A |r|}\int_0^{+\infty} \left|f_\xi(|z|e^{-i\theta_A})\right| \frac{C_A}{1+|z|}~ d|z|.
\end{array}$$
From \refL{Lem Lab-Ter}, which remains true for $z=0$, there exist $K_0,K_1 >0$, independant of $r$ and $\xi$, such that
$$\left|f_\xi \left(|z|e^{i\theta_A}\right)\right| \leqslant \frac{K_0 \,\xi |z|}{|z|\,|z+r'|} = \frac{K_0 \,\xi}{|z+r'|} \leqslant \frac{K_1 \,\xi}{|z|+r'},$$
and
$$\left|f_\xi \left(|z|e^{-i\theta_A}\right)\right| \leqslant \frac{K_0\, \xi |z|}{|z|\, |z+r'|} = \frac{K_0 \,\xi}{|z+r'|}\leqslant \frac{K_1 \,\xi}{|z|+r'}.$$
Then, since $r'>0$, there exists $K_2>0$, independant of $\xi$, such that
$$\left\|\frac{\partial B_\xi^{ir}}{\partial \xi}\right\|_{\L(X)} \leqslant \xi \int_0^{+\infty} \frac{2 Ce^{2\theta_A |r|}C_A \,K_1}{(|z|+r')(1+|z|)}~d|z| \leqslant \xi \int_0^{+\infty} \frac{2Ce^{2\theta_A |r|}C_A \,K_1}{\left(|z|+\min(r',1)\right)^2}~d|z| \leqslant K_2 \,\xi.$$
Hence
$$\int_0^{1+\frac{\sqrt{|k|}}{2\pi\sqrt{2}}} \left\|\frac{\partial B_\xi^{ir}}{\partial \xi}\right\|_{\L(X)} d\xi \leqslant K_2\int_0^{1+\frac{\sqrt{|k|}}{2\pi\sqrt{2}}} \xi~d\xi = \frac{K_2}{2} \left(1+\frac{\sqrt{|k|}}{2\pi\sqrt{2}}\right)^2,$$
and there exists $K_3 > 0$, independant of $\xi$, such that
$$ \int_{1+\frac{\sqrt{|k|}}{2\pi\sqrt{2}}}^{+\infty} \left\|\frac{\partial B_\xi^{ir}}{\partial \xi}\right\|_{\L(X)} d\xi \leqslant \int_{1+\frac{\sqrt{|k|}}{2\pi\sqrt{2}}}^{+\infty} \int_0^{+\infty} \left( \left|f_\xi(|z|e^{i\theta_A})\right| + \left|f_\xi(|z|e^{-i\theta_A})\right|\right)\frac{K_3}{1+|z|}~ d|z|\, d\xi.$$
Moreover, we set 
$$C_\xi = \frac{k}{2} + 4 \pi^2 \xi^2.$$
Note that when $\xi \geqslant 1+\frac{\sqrt{|k|}}{2\pi\sqrt{2}}>0$, then $C_\xi > 0$. Thus, from \refL{Lem Lab-Ter}, which remains true for $z=0$, there exist $K_4,K_5,K_6 > 0$, independant of $\xi$, such that
$$\begin{array}{lll}
\dis \int_0^{+\infty} \left|f_\xi(|z|e^{i\theta_A})\right|\frac{K_3}{1+|z|}~ d|z| &=& \dis \int_0^1 \frac{16 r \pi^2 \xi |z|}{\left||z|e^{i\theta_A} - C_\xi\right|\left| |z|e^{i\theta_A} + C_\xi + C_\xi^2 + r'\right|} \frac{K_3}{1+|z|}~d|z| \\ \ecart
&& \dis + \int_1^{+\infty} \frac{16 r \pi^2 \xi |z|}{\left||z|e^{i\theta_A} - C_\xi\right|\left| |z|e^{i\theta_A} + C_\xi + C_\xi^2 + r'\right|} \frac{K_3}{1+|z|}~d|z| \\ \\

& \leqslant & \dis \int_0^1 \frac{K_4\, \xi}{C_\xi^3} ~d|z| + \int_1^{+\infty} \frac{K_5 \,\xi}{\left||z|e^{i\theta_A} - C_\xi\right|\left| |z|e^{i\theta_A} + C_\xi + C_\xi^2 \right|} ~d|z| \\ \ecart

& \leqslant & \dis \frac{K_4 \,\xi}{C_\xi^3} + \int_1^{+\infty} \frac{K_5 \,\xi}{\left||z|^2e^{2i\theta_A} - C_\xi^2 + C_\xi^2 |z|e^{i\theta_A} - C_\xi^3\right|}~d|z| \\ \ecart

& \leqslant & \dis \frac{K_4\, \xi}{C_\xi^3} + \int_1^{+\infty} \frac{K_6\,\xi}{\left||z|^2e^{2i\theta_A} + C_\xi^2 |z|e^{i\theta_A} - C_\xi^3\right|}~d|z|.
\end{array}$$
From Proposition 13, p. 8 in \cite{DLMMT}, we have
$$\left|\arg\left(|z|^2e^{2i\theta_A} + C_\xi^2 |z|e^{i\theta_A} \right)\right| \leqslant \max(2\theta_A,\theta_A) = 2\theta_A,$$
and due to \refP{Prop DoreLabbas}, we have
$$\begin{array}{lll}
\dis \left||z|^2e^{2i\theta_A} + C_\xi^2 |z|e^{i\theta_A} - C_\xi^3\right| & \geqslant & \dis \left(\left||z|^2e^{2i\theta_A} + C_\xi^2 |z|e^{i\theta_A}\right| + C_\xi^3\right) \cos\left(\frac{2\theta_A}{2} \right) \\ \ecart
& \geqslant & \dis \left(\left(\left||z|^2e^{2i\theta_A}\right| + \left| C_\xi^2 |z|e^{i\theta_A}\right|\right) \cos\left(\frac{2\theta_A - \theta_A}{2}\right) + C_\xi^3 \right)\cos(\theta_A) \\ \ecart
& \geqslant & \dis \left(|z|^2\cos\left(\frac{\theta_A}{2}\right) + C_\xi^3 \right)\cos(\theta_A) \\ \ecart
& \geqslant & \dis \left(|z|^2 + C_\xi^3\right)\cos\left(\frac{\theta_A}{2}\right)\cos(\theta_A) > 0.
\end{array}$$
Thus, there exists $K_7 >0$, independant of $\xi$, such that
$$\begin{array}{lll}
\dis \int_0^{+\infty} \left|f_\xi(|z|e^{i\theta_A})\right|\frac{K_3}{1+|z|}~ d|z| &\leqslant & \dis \frac{K_4\, \xi}{C_\xi^3} + \int_1^{+\infty} \frac{K_7\, \xi}{|z|^2 + C_\xi^3}~d|z| \\ \ecart

& \leqslant & \dis \frac{K_4\, \xi}{C_\xi^3} + \frac{K_7\,\xi}{C_\xi^3} \int_{\frac{1}{C_\xi^{3/2}}}^{+\infty} \frac{C_\xi^{3/2}}{x^2 + 1}~dx \\ \ecart

& \leqslant & \dis \frac{K_4\, \xi}{C_\xi^3} + \frac{K_7\,\xi}{C_\xi^{3/2}} \left(\frac{\pi}{2} - \arctan\left(\frac{1}{C_\xi^{3/2}} \right)\right) \\ \ecart

& \leqslant & \dis \frac{K_4\, \xi}{C_\xi^3} + \frac{K_7\,\pi\xi}{2 \,C_\xi^{3/2}}
\end{array}$$
In the same way, we obtain
$$\int_0^{+\infty} \left|f_\xi(|z|e^{-i\theta_A})\right|\frac{K_3}{1+|z|}~ d|z| \leqslant \frac{K_4\, \xi}{C_\xi^3} + \frac{K_7\, \pi \xi}{2\,C_\xi^{3/2}}.$$
Finally, since $\xi \geqslant 1+\frac{\sqrt{|k|}}{2\pi\sqrt{2}}>0$, we have
$$0< \frac{\xi^2}{C_\xi} = \frac{\xi^2}{\frac{k}{2} + 4\pi^2\xi^2} = \frac{1}{\frac{k}{2\xi^2} + 4\pi^2} \leqslant \frac{1}{4\pi^2\left( 1 + \frac{k}{|k| + 4\pi\sqrt{2|k|} + 8\pi^2}\right)} := C_k,$$
it follows that
$$\frac{1}{C_\xi} \leqslant \frac{C_k}{\xi^2},$$
hence, there exists $K_8 > 0$, independant of $\xi$, such that
$$\begin{array}{lll}
\dis \int_{1+\frac{\sqrt{|k|}}{2\pi\sqrt{2}}}^{+\infty} \left\|\frac{\partial B_\xi^{ir}}{\partial \xi}\right\|_{\L(X)} d\xi & \leqslant & \dis \int_{1+\frac{\sqrt{|k|}}{2\pi\sqrt{2}}}^{+\infty} \frac{K_4\, \xi}{C_\xi^3} + \frac{K_7\,\pi\xi}{2 \,C_\xi^{3/2}} ~d\xi \\ \ecart

& \leqslant & \dis \int_{1+\frac{\sqrt{|k|}}{2\pi\sqrt{2}}}^{+\infty} \frac{C_k^3 K_4\, \xi}{\xi^6} + \frac{C_k^{3/2} K_7\, \pi\xi}{2 \xi^3} ~d\xi \\ \ecart

& \leqslant & \dis \int_{1}^{+\infty} \frac{K_8}{\xi^2}~d\xi = K_8.
\end{array}$$
Thus
$$\int_0^{+\infty} \left\|\frac{\partial B_\xi^{ir}}{\partial \xi}\right\|_{\L(X)} d\xi < + \infty,$$
and similarly, we obtain
$$\int_{-\infty}^0 \left\|\frac{\partial B_\xi^{ir}}{\partial \xi}\right\|_{\L(X)} d\xi < + \infty.$$
Therefore, from Proposition 2.5, p. 739 in \cite{weis}, it follows that $\left\{B_\xi^{ir} : \xi \in \RR \setminus \{0\}\right\}$, is $\mathcal{R}$-bounded.

Now, following the same steps, we consider
$$\frac{\partial}{\partial \xi} \left(\xi \frac{\partial B_\xi^{ir}}{\partial \xi}\right) = \frac{\partial B_\xi^{ir}}{\partial \xi} + \xi \frac{\partial^2 B_\xi^{ir}}{\partial \xi^2}.$$
Thus, we have to show that
$$\int_0^{+\infty} \left\|\xi\frac{\partial^2 B_\xi^{ir}}{\partial \xi^2}\right\|_{\L(X)} d\xi < + \infty.$$
From the previous steps, we have proved that there exists $K_9 >0$ such that 
\begin{equation}\label{norme xi}
\left\|\xi \left(-A + \frac{k}{2} I + 4 \pi^2\xi^2 I\right)B_\xi^{-1} \right\|_{\L(X)} \leqslant \left\{\begin{array}{ll}
K_9\, \xi & \text{if } 0 \leqslant \xi \leqslant 1 + \frac{\sqrt{|k|}}{2\pi \sqrt{2}} \\ \ecart

\dfrac{K_9}{\xi^2} & \text{if } \xi > 1 + \frac{\sqrt{|k|}}{2\pi \sqrt{2}}.
\end{array}\right.
\end{equation}
Moreover, from \refL{Lem B_xi BIP inversible} and its proof, $0 \in \rho(B_\xi)$, $-A(-A+k I+8\pi^2\xi^2 I)+r' \in \text{Sect}(2\theta_A)$ and $0 \in \rho(-A(-A+kI+8\pi^2\xi^2I)+r')$. Then, there exists $K_{10},K_{11} > 0$, independant of $\xi$, such that
\begin{equation}\label{norme xi 2}
\dis \left\|B_\xi^{-1} \right\|_{\L(X)} \leqslant \dfrac{K_{10}}{1+C_\xi^2} \leqslant \left\{\begin{array}{ll}
K_{10} & \text{if } 0 \leqslant \xi \leqslant 1 + \frac{\sqrt{|k|}}{2\pi \sqrt{2}} \\ \ecart

\dfrac{K_{11}}{\xi^4} & \text{if } \xi > 1 + \frac{\sqrt{|k|}}{2\pi \sqrt{2}}.
\end{array}\right.
\end{equation}
From \refL{Lem dérivée B_xi^alpha}, we have
$$\begin{array}{lll}
\dis \frac{\partial^2 B_{\xi}^{ir} }{\partial \xi^2} & = & \dis 16ir\pi^2\left(-A + \frac{k}{2}I + 4\pi^2\xi^2 I\right) B_{\xi}^{-1}B_{\xi}^{ir} + 128 ir \pi^4\xi^2 B_\xi^{-1} B_{\xi}^{ir}\\ \ecart
&& \dis + 256ir(ir-1)\pi^4\xi^2\left(-A + \dfrac{k}{2}I + 4\pi^2\xi^2 I\right)^2 B_{\xi}^{-2}B_{\xi}^{ir}, 
\end{array}$$
and due to \refL{Lem B_xi BIP inversible}, there exists $K_{12}>0$, independant of $\xi$, such that
$$\begin{array}{lll}
\dis \left\|\xi \frac{\partial^2 B_{\xi}^{ir} }{\partial \xi^2} \right\|_{\L(X)} & \leqslant & \dis 16|r|\pi^2\left\|\xi\left(-A + \frac{k}{2}I + 4\pi^2\xi^2 I\right) B_{\xi}^{-1}\right\|_{\L(X)} \left\|B_{\xi}^{ir}\right\|_{\L(X)} \\ \ecart
&& \dis + 128 |r| \pi^4 \left\|\xi^2 B_\xi^{-1}\right\|_{\L(X)} \left\|B_{\xi}^{ir}\right\|_{\L(X)} \\ \ecart
&& \dis + 256|r| (|r|+1) \pi^4 \left\|\xi\left(-A + \dfrac{k}{2}I + 4\pi^2\xi^2 I\right) B_{\xi}^{-1}\right\|^2_{\L(X)} \left\|B_{\xi}^{ir}\right\|_{\L(X)} \\ \\

& \leqslant & \dis K_{12} \left\|\xi\left(-A + \dfrac{k}{2}I + 4\pi^2\xi^2 I\right) B_{\xi}^{-1}\right\|_{\L(X)} + K_{12} \,\xi^2 \left\| B_\xi^{-1}\right\|_{\L(X)} \\ \ecart
&& \dis + K_{12} \left\|\xi\left(-A + \dfrac{k}{2}I + 4\pi^2\xi^2 I\right) B_{\xi}^{-1}\right\|^2_{\L(X)}.
\end{array}$$
Then, from \eqref{norme xi} and \eqref{norme xi 2}, there exists $K_{13},K_{14}>0$, independant of $\xi$, such that
$$\begin{array}{lll}
\dis \int_0^{+\infty} \left\|\xi \frac{\partial^2 B_{\xi}^{ir} }{\partial \xi^2} \right\|_{\L(X)}d\xi & = & \dis \int_0^{1 + \frac{\sqrt{|k|}}{2\pi \sqrt{2}}} \left\|\xi \frac{\partial^2 B_{\xi}^{ir} }{\partial \xi^2} \right\|_{\L(X)}d\xi + \int_{1 + \frac{\sqrt{|k|}}{2\pi \sqrt{2}}}^{+\infty} \left\|\xi \frac{\partial^2 B_{\xi}^{ir} }{\partial \xi^2} \right\|_{\L(X)}d\xi \\ \ecart

& \leqslant & \dis K_{13} \int_0^{1 + \frac{\sqrt{|k|}}{2\pi \sqrt{2}}} \xi + 2\xi^2 ~d\xi + K_{13} \int_{1 + \frac{\sqrt{|k|}}{2\pi \sqrt{2}}}^{+\infty} \frac{2}{\xi^2} + \frac{1}{\xi^4}~d\xi \\ \ecart

& \leqslant & \dis K_{14} \left(1 + \int_1^{+\infty} \frac{3}{\xi^2}~d\xi \right) = 4 K_{14}.
\end{array}$$
Thus
$$\int_0^{+\infty} \left\|\frac{\partial}{\partial \xi} \left(\xi \frac{\partial B_\xi^{ir}}{\partial \xi}\right)\right\|_{\L(X)}d\xi < + \infty,$$
and similarly
$$\int_{-\infty}^0 \left\|\frac{\partial}{\partial \xi} \left(\xi \frac{\partial B_\xi^{ir}}{\partial \xi}\right)\right\|_{\L(X)}d\xi < + \infty.$$
Therefore, from Proposition 2.5, p. 739 in \cite{weis}, it follows that $\left\{\xi \frac{\partial B_\xi^{ir}}{\partial \xi} : \xi \in \RR \setminus \{0\}\right\}$, is $\mathcal{R}$-bounded.
\end{proof}

\section{Proofs of main results}\label{Sect Proofs of main results}

\subsection{Proof of \refT{Th BIP}}

\begin{proof}
Let $\theta_A \in [0,\pi/2)$. From \cite{LMT}, Proposition 4.1, p. 935, for $i=1,2$, we deduce that 
$$-\A_i + \frac{k^2}{4} I \in \text{Sect\,}(2\theta_A),$$ 
for $i=3,4$, there exists $r'>0$, for all $\theta_0 > 0$, such that
$$\left\{\begin{array}{ll}
\dis -\A_i + \frac{k^2}{4}I + r' I \in \text{Sect\,}(2\theta_A), & \text{if } \dis \theta_A > 0,\\ \ecart

\dis -\A_i + \frac{k^2}{4}I + r' I \in \text{Sect\,}(\theta_0), & \text{if } \dis \theta_A = 0.
\end{array}\right.$$
Thus, for $i=1,2,3,4$, we set
$$-A_i = -\A_i + \frac{k^2}{4}I + r' I.$$
From \cite{LMT}, Proposition 4.9 and Proposition 4.10, p. 942, for $i=1,2,3,4$, it follows that
\begin{equation}\label{0 in rho(Ai)}
0\in \rho\left(-A_i\right).
\end{equation}
Thus operators $-A_i$ are sectorial injective operators. Moreover, due to $(H_1)$, since $X$ is a UMD space, then $X$ is a reflexive space and from \cite{haase}, Proposition~2.1.1, h), p. 20-21, we have
$$\overline{D(A_i)} = \overline{R(A_i)} = X.$$
Now, it remains to prove that the imaginary powers of $-A_i$ are bounded.

Let $\varepsilon >0$, $r \in \RR$ and $\lambda \in \rho(-A_i)$. Using the Dunford-Riesz integral, we have
$$\left[\left(-A_i\right)^{-\varepsilon + ir} f\right](x) = \frac{1}{2i\pi} \int_\Gamma \lambda^{-\varepsilon+ir} \left[\left(-A_i - \lambda I\right)^{-1} f\right](x)~ d\lambda,$$
where $\Gamma$ is a sectorial curve such that for $\varepsilon_0 > 0$ fixed and for all $\theta' > 0$, we have
$$\Gamma = \left\{\begin{array}{ll}
\dis \partial \left(\overline{S_{2\theta_A + \theta'}} \setminus B(0,\varepsilon_0)\right), & \text{if } \dis \theta_A > 0,\\ \ecart

\dis \partial \left(\overline{S_{\theta_0 + \theta'}}\setminus B(0,\varepsilon_0)\right), & \text{if } \dis \theta_A = 0.
\end{array}\right.$$
Let $\dis \lambda \in \rho(-A_i) = \rho\left(-\A_i + \frac{k^2}{4} I + r' I\right)$. Then, we have 
$$\left(-A_i - \lambda I \right)^{-1} = \left(-\A_i - \left(\lambda - \frac{k^2}{4} - r'\right)I\right)^{-1} \in \L(X).$$
To simplify the notations, we set  
$$\mu = \lambda - \frac{k^2}{4} - r',$$
it follows that
$$\left(-A_i - \lambda I \right)^{-1} = \left(-\A_i - \mu I\right)^{-1} \in \L(X).$$
Let $c=b-a>0$, $\varphi \in L^p(a,b;X)$ and $T_\mu$ be a linear operator such that $0 \in \rho(T_\mu)$ and $-T_\mu \in$ Sect$(\theta_T)$, with $\theta_T \in [0,\pi/2)$. In order to simplify the notations, for all $x \in [a,b]$, we set
\begin{equation*}
\begin{array}{rll}
K_{T_\mu,\varphi}(x) & = & \dis \frac{1}{2} \left(e^{(b-x)T_\mu} e^{c T_\mu} - e^{(x-a)T_\mu}\right)  \left(I - e^{2c T_\mu}\right)^{-1} T_\mu^{-1} \int_a^b e^{(s-a)T_\mu} \varphi(s)~ ds \\ \ecart
&&+ \dis \frac{1}{2} \left(e^{(x-a)T_\mu} e^{c T_\mu} - e^{(b-x)T_\mu}\right) \left(I - e^{2c T_\mu}\right)^{-1} T_\mu^{-1} \int_a^b e^{(b-s)T_\mu} \varphi(s)~ ds
\end{array}
\end{equation*}
and
\begin{equation*}
\begin{array}{lll}
J_{T_\mu,\varphi}(x)& = & \dis \frac{1}{2} \, T_\mu^{-1} \int_a^x e^{(x-s)T_\mu} \varphi(s)~ ds + \frac{1}{2} \, T_\mu^{-1} \int_x^b e^{(s-x)T_\mu} \varphi(s)~ ds \\ \ecart
& = & \dis \frac{1}{2} \, T_\mu^{-1} \int_a^b e^{|x-s|T_\mu} \varphi(s)~ ds
\end{array}
\end{equation*}
From the representation formulas in \cite{LMT}, p. 952-953-954-955 and 957, we deduce that
$$\begin{array}{lll}
\dis \left(-A_i - \lambda I \right)^{-1}f(x) & = & \dis \left(e^{(x-a)M_\mu} - e^{(b-x)M_\mu}\right) \alpha_{1,\mu,i} + \left(e^{(x-a)L_\mu} - e^{(b-x)L_\mu} \right) \alpha_{2,\mu,i} \\ \ecart
&& \dis + \left(e^{(x-a)M_\mu} + e^{(b-x)M_\mu}\right) \alpha_{3,\mu,i} + \left(e^{(x-a)L_\mu} + e^{(b-x)L_\mu} \right) \alpha_{4,\mu,i} + F_{0,f}(x),
\end{array}
$$
where, due to (53) in \cite{LMT}, we have
$$F_{0,f}(x) = K_{M_\mu, K_{L_\mu, f} +J_{L_\mu , f}} (x) + J_{M_\mu, K_{L_\mu, f} +J_{L_\mu , f}} (x),$$
with
$$M_\mu = -\sqrt{- A + \frac{k}{2}I - i\sqrt{-\mu - \frac{k^2}{4}}I} \quad \text{and} \quad L_\mu = -\sqrt{- A + \frac{k}{2} + i\sqrt{-\mu - \frac{k^2}{4}}I}.$$
Moreover, we recall for the boundary conditions (BC1), from \cite{LMT}, p. 952, that
$$\alpha_{1,\mu,1} = \alpha_{2,\mu,1} = \alpha_{3,\mu,1} = \alpha_{4,\mu,1} = 0,$$
for the boundary conditions (BC2), from \cite{LMT}, p. 953, we recall that
\begin{equation*}
\left\{\begin{array}{rcl}
\alpha_{1,\mu,2} & = & \dis - \frac{1}{2} \left(I + e^{cM_\mu}\right)^{-1} M_\mu^{-1} \left(F_{0,f}'(a) + F'_{0,f}(b)\right) \\ \ecart 

\alpha_{2,\mu,2}  & = & \dis 0 \\ \ecart

\alpha_{3,\mu,2}  & = & \dis - \frac{1}{2} \left(I - e^{cM_\mu}\right)^{-1} M_\mu^{-1} \left(F_{0,f}'(a) - F'_{0,f}(b)\right) \\ \ecart 

\alpha_{4,\mu,2}  & = & \dis 0,
\end{array}\right.
\end{equation*}
for the boundary conditions (BC3), from \cite{LMT}, p. 955, we recall that
\begin{equation*}
\left\{\begin{array}{rcl}
\alpha_{1,\mu,3} & = & \dis~~ \frac{1}{2} B_\mu^{-1} (L_\mu + M_\mu) U_\mu^{-1} \left(I - e^{cL_\mu}\right) \left(F_{0,f}'(a) + F'_{0,f}(b)\right) \\ \ecart

\alpha_{2,\mu,3} & = & \dis -\frac{1}{2} B_\mu^{-1} (L_\mu + M_\mu) U_\mu^{-1} \left(I - e^{cM_\mu}\right) \left(F_{0,f}'(a) + F'_{0,f}(b)\right) \\ \ecart

\alpha_{3,\mu,3} & = & \dis ~~\frac{1}{2} B_\mu^{-1} (L_\mu + M_\mu) V_\mu^{-1} \left(I + e^{cL_\mu}\right) \left(F_{0,f}'(a) - F'_{0,f}(b)\right)  \\ \ecart

\alpha_{4,\mu,3} & = & \dis -\frac{1}{2} B_\mu^{-1} (L_\mu + M_\mu) V_\mu^{-1} \left(I + e^{cM_\mu}\right) \left(F_{0,f}'(a) - F'_{0,f}(b)\right),
\end{array}\right.
\end{equation*}
where
\begin{equation*}
\left\{\begin{array}{llll}
B_\mu &=&\dis 2i\sqrt{-\mu-\frac{k^2}{4}}I \\ \ecart

U_\mu &:=& I-e^{c \left( L_\mu + M_\mu \right) } - B_\mu^{-1} \left( L_\mu + M_\mu\right)^2 \left( e^{c M_\mu}-e^{c L_\mu}\right) \\ \ecart

V_\mu &:=& I-e^{c \left( L_\mu + M_\mu\right) } + B_\mu^{-1}\left( L_\mu + M_\mu\right)^2 \left( e^{c M_\mu}-e^{c L_\mu}\right),
\end{array}\right.
\end{equation*}
and for the boundary conditions (BC4), from \cite{LMT}, p. 957, we recall that
\begin{equation*}
\left\{\begin{array}{rcl}
\alpha_{1,\mu,4} & = & \dis - \frac{1}{2} B_\mu^{-1} (L_\mu + M_\mu) V_\mu^{-1} \left(I - e^{cL_\mu}\right) L_\mu  M_\mu^{-1} \left(F_{0,f}'(a) + F'_{0,f}(b)\right) \\ \ecart

\alpha_{2,\mu,4} & = & \dis ~~\frac{1}{2} B_\mu^{-1} (L_\mu + M_\mu) V_\mu^{-1} \left(I - e^{cM_\mu}\right) M_\mu L_\mu^{-1} \left(F_{0,f}'(a) + F'_{0,f}(b)\right) \\ \ecart

\alpha_{3,\mu,4} & = & \dis - \frac{1}{2} B_\mu^{-1} (L_\mu + M_\mu) U_\mu^{-1} \left(I + e^{cL_\mu}\right) L_\mu M_\mu^{-1} \left(F_{0,f}'(a) - F'_{0,f}(b)\right)  \\ \ecart

\alpha_{4,\mu,4} & = & \dis ~~\frac{1}{2} B_\mu^{-1} (L_\mu + M_\mu) U_\mu^{-1} \left(I + e^{cM_\mu}\right) M_\mu L_\mu^{-1} \left(F_{0,f}'(a) - F'_{0,f}(b)\right).
\end{array}\right.
\end{equation*}
Now, in view to apply \refL{Lem Amann}, we have to split this resolvant operator into two. To this end, we define two relations which simplify calculations.

Let $P_\lambda$, $Q_{1,\lambda}$ and $Q_{2,\lambda}$ three bounded linear operators such that
$$P_\lambda = Q_{1,\lambda} + Q_{2,\lambda},$$
where $\lambda \in \Gamma$, a sectorial curve which surrounds the spectrum of $P_\lambda$. We define the relation $\sim$ such that  
$$P_\lambda \sim Q_{1,\lambda},$$
means that
$$\lambda \longmapsto Q_{2,\lambda} \in L^1\left(\Gamma,\L(X)\right).$$
In the same way, we define the relation $\simeq$ such that for all $\varphi \in L^p(a,b;X)$, if $P_\lambda \sim Q_{1,\lambda}$, then 
$$P_\lambda \varphi(x) \simeq Q_{1,\lambda} \varphi(x).$$
For all $t_0 >0$, since we have
$$\left(I - e^{t_0 T_\mu}\right)^{-1} = I + e^{t_0 T_\mu}\left(I - e^{t_0 T_\mu}\right)^{-1},$$
from \cite{dore-yakubov}, Lemma 2.6, statement a), p. 104, it follows that
$$\left(I - e^{t_0 T_\mu}\right)^{-1} \sim I.$$
Note that we have
$$K_{T_\mu,\varphi}(a) + J_{T_\mu,\varphi}(a) = 0 = K_{T_\mu,\varphi}(b) + J_{T_\mu,\varphi}(b),$$
and when $x\in(a,b)$, from \refL{Lem T^alp e^(tT) borné}, for all $m>0$, we obtain
$$\begin{array}{lll}
\dis \left\|T_\mu^{-m}T_\mu^m\left(e^{2(x-a)T_\mu} \pm e^{2(b-x)T_\mu}\right)\right\|_{\L(X)} & \leqslant & \dis \left\|T_\mu^{-m}\right\|_{\L(X)} \left\|T_\mu^m\left(e^{2(x-a)T_\mu} \pm e^{2(b-x)T_\mu}\right)\right\|_{\L(X)} \\ \ecart
& \leqslant & \dis C\left\|T_\mu^{-m}\right\|_{\L(X)}.
\end{array}$$
Thus, for $m$ enough large, since in our case $T_\mu=M_\mu$ or $L_\mu$, it follows that
$$F_{0,f}(x) = K_{M_\mu, K_{L_\mu, f} +J_{L_\mu , f}} (x) + J_{M_\mu, K_{L_\mu, f} +J_{L_\mu , f}} (x) \simeq J_{M_\mu,J_{L_\mu,f}}(x),$$
hence, for the boundary conditions (BC1), we have
$$(-A_1 - \lambda I)^{-1}f(x) \simeq J_{M_\mu,J_{L_\mu,f}}(x).$$
and similarly, for the boundary conditions (BC2), (BC3) and (BC4), since
$$F'_{0,f}(a) + F'_{0,f}(b) \simeq \int_a^b \left(e^{(b-s)M_\mu} - e^{(s-a)M_\mu}\right) \left(K_{L_\mu,f}(s) + J_{L_\mu,f}(s)\right)ds,$$
and
$$F'_{0,f}(a) - F'_{0,f}(b) \simeq -\int_a^b \left(e^{(b-s)M_\mu} + e^{(s-a)M_\mu}\right) \left(K_{L_\mu,f}(s) + J_{L_\mu,f}(s)\right)ds,$$
we obtain that
$$(-A_i - \lambda I)^{-1}f(x) \simeq J_{M_\mu,J_{L_\mu,f}}(x),$$
for $i=2,3,4$.

Therefore, in view to apply \refL{Lem Amann}, it remains to study the following convolution term 
$$J_{M_\mu,J_{L_\mu,f}}(x) = \frac{1}{4} M_\mu^{-1}L_\mu^{-1} \int_a^b e^{|x-s|M_\mu} \int_a^b e^{|s-t|L_\mu} f(t)~ dt\, ds.$$
To this end, for $\varepsilon >0$, we set 
$$S_f(x) := \frac{1}{2i\pi} \int_\Gamma \lambda^{-\varepsilon+ir} J_{M_\mu,J_{L_\mu,f}}(x) ~ d\lambda.$$
Our aim is to write $S_f$ as follows
$$S_f(x) = \F^{-1}\left(m_\varepsilon (\xi) \F(f)\right)(x),$$
where $m_\varepsilon (\xi) \in \L(X)$.

To this end, we first consider that $f \in \D(a,b;X)$, where $\D(a,b;X)$ is the set of $C^\infty$ functions with compact support in $(a,b)$. For $T_\mu = M_\mu$ or $L_\mu$, for all $x \in \RR$, we set 
$$E_{T_\mu} (x) = e^{|x| T_\mu}.$$
We have
$$\begin{array}{lll}
S_f(x) & = & \dis \frac{1}{2i\pi}\int_\Gamma \lambda^{-\varepsilon + ir} J_{M_\mu,J_{L_\mu,f}}(x)~d\lambda \\ \\

& = & \dis \frac{1}{2i\pi}\int_\Gamma \frac{\lambda^{-\varepsilon + ir}}{2} M_\mu^{-1} \left(E_{M_\mu} \star J_{L_\mu,f}\right)(x)~d\lambda \\ \\

& = & \dis \frac{1}{2i\pi} \int_\Gamma \frac{\lambda^{-\varepsilon + ir}}{4} M_\mu^{-1}L_\mu^{-1} \left(E_{M_\mu} \star \left( E_{L_\mu} \star f\right)\right)(x)~d\lambda.
\end{array}$$
Here, the abstract convolution is well defined in virtue of \cite{amann}. Since we have $f \in \D(a,b;X)$ and $E_{T_\mu} \in C(a,b;\L(X))$, then $J_{L_\mu, f} = E_{L_\mu} \star f \in \S(\RR;X)$ and thus $E_{M_\mu} \star J_{L_\mu,f} \in \S(\RR;X)$, where $\mathcal{S}$ is the Schwartz space that is the function space of all functions whose derivatives are rapidly decreasing. Therefore, from \cite{amann}, Theorem 3.6, p. 13, we obtain
$$\begin{array}{lll}
S_f(x) & = & \dis \frac{1}{2i\pi} \int_\Gamma \lambda^{-\varepsilon + ir} \F^{-1}\left(\F\left(J_{M_\mu,J_{L_\mu,f}} \right)\right)(x)~ d\lambda  \\ \\

& = & \dis \F^{-1}\left(\frac{1}{2i\pi} \int_\Gamma \frac{\lambda^{-\varepsilon + ir}}{2} M_\mu^{-1}\F\left(E_{M_\mu} \star J_{L_\mu,f} \right)~ d\lambda \right)(x) \\ \\

& = & \dis \F^{-1}\left(\frac{1}{2i\pi} \int_\Gamma \frac{\lambda^{-\varepsilon + ir}}{2} M_\mu^{-1} \F\left(E_{M_\mu}\right) \F\left(J_{L_\mu,f} \right)~ d\lambda\right)(x) \\ \\

& = & \dis\F^{-1}\left(\frac{1}{2i\pi} \int_\Gamma \frac{\lambda^{-\varepsilon + ir}}{4} M_\mu^{-1} L_\mu^{-1} \F\left(E_{M_\mu}\right) \F\left(E_{L_\mu} \star f\right)~ d\lambda\right)(x) \\ \ecart

& = & \dis\F^{-1}\left(\frac{1}{2i\pi} \int_\Gamma \frac{\lambda^{-\varepsilon + ir}}{4} M_\mu^{-1} L_\mu^{-1} \F\left(E_{M_\mu}\right) \F\left(E_{L_\mu}\right) \F\left(f\right)~ d\lambda \right)(x) \\ \ecart

& = & \dis \F^{-1}\left(\frac{1}{2i\pi} \int_\Gamma \frac{\lambda^{-\varepsilon + ir}}{4} M_\mu^{-1} L_\mu^{-1} \F\left(E_{M_\mu}\right) \F\left(E_{L_\mu}\right)~ d\lambda~ \F\left(f\right) \right)(x).\end{array}$$
Note that, $\F(E_T)$ is the Fourier transform of the operator-valued function $E_T$ and moreover, as it is described in \cite{weis}, the following integral
$$\frac{1}{2i\pi} \int_\Gamma \frac{\lambda^{-\varepsilon + ir}}{4} M_\mu^{-1} L_\mu^{-1} \F\left(E_{M_\mu}\right) \F\left(E_{L_\mu}\right)~ d\lambda,$$
is an operator-valued Fourier multiplier.

Now, we will make explicit this Fourier multiplier. To this end, we have to determine $\F\left(E_{M_\mu}\right)$ and $\F\left(E_{L_\mu}\right)$. Using \refL{Lem F(E)}, for $T=M_\mu$ or $L_\mu$, we obtain
$$\begin{array}{lll}
\T_\F (\xi) & = & \dis \frac{\lambda^{-\varepsilon + ir}}{4} M_\mu^{-1} L_\mu^{-1} \F\left(E_{M_\mu}\right)(\xi) \F\left(E_{L_\mu}\right)(\xi) \\ \ecart

& = & \dis \lambda^{-\varepsilon + ir} M_\mu^{-1} L_\mu^{-1}  M_\mu L_\mu \left(M_\mu^2 + 4\pi^2\xi^2 I \right)^{-1} \left(L_\mu^2 + 4\pi^2\xi^2 I \right)^{-1} \\ \ecart

& = & \dis \lambda^{-\varepsilon + ir} \left(M_\mu^2 + 4\pi^2\xi^2 I \right)^{-1} \left(L_\mu^2 + 4\pi^2\xi^2 I \right)^{-1} \\ \ecart

& = & \dis \lambda^{-\varepsilon + ir} \left(M^2_\mu L^2_\mu + 4\pi^2\xi^2\left(M^2_\mu + L^2_\mu\right) + 16\pi^4\xi^4 I \right)^{-1}
\end{array}$$

and since
$$\left\{\begin{array}{l}
\dis M^2_\mu L^2_\mu = A^2 - kA - \lambda I + \frac{k^2}{4} I + r' I \\ \ecart
M^2_\mu + L^2_\mu = 2A - kI,
\end{array}\right.$$
it follows that
$$\begin{array}{lll}
\T_\F (\xi) & = & \dis \lambda^{-\varepsilon + ir} \left(A^2 - kA - \lambda I + \frac{k^2}{4} I - 4\pi^2\xi^2\left(2A - kI\right) + 16\pi^4\xi^4 I + r'I\right)^{-1} \\ \ecart

& = & \dis \lambda^{-\varepsilon + ir} \left(-A\left(-A + k I + 8\pi^2\xi^2 I\right) + \frac{k^2}{4} I + 4k\pi^2\xi^2 I + 16\pi^4\xi^4 I + r'I - \lambda I \right)^{-1} \\ \ecart

& = & \dis \lambda^{-\varepsilon + ir} \left(-A\left(-A + k I + 8\pi^2\xi^2 I\right) + \left(\frac{k}{2} + 4\pi^2\xi^2 \right)^2 I + r'I - \lambda I \right)^{-1} \\ \ecart

& = & \dis \lambda^{-\varepsilon + ir} \left(B_{\xi} - \lambda I\right)^{-1},
\end{array}$$
where 
\begin{equation}\label{B_xi}
B_{\xi} := -A\left(-A + k I + 8\pi^2\xi^2 I\right) + \left(\frac{k}{2} + 4\pi^2\xi^2 \right)^2 I + r'I.
\end{equation}
Moreover, due to \refL{Lem B_xi BIP inversible}, by functional calculus, we have
$$\begin{array}{lll}
S_f(x) & = & \dis \F^{-1}\left(\frac{1}{2i\pi} \int_\Gamma \T_\F~ d\lambda~ \F\left(f\right) \right)(x) \\ \ecart

& = & \dis \F^{-1}\left(\frac{1}{2i\pi} \int_\Gamma \lambda^{-\varepsilon + ir} \left(B_{\xi} - \lambda I\right)^{-1}~ d\lambda~ \F\left(f\right) \right)(x) \\ \ecart

& = & \dis \F^{-1}\left(m_{\varepsilon}\, \F\left(f\right) \right)(x),
\end{array}$$
where
$$m_{\varepsilon} (\xi) = B_{\xi}^{-\varepsilon + ir}.$$
Now, since $f \in \mathcal{D}(a,b;X)$, then $\F(f) \in D(B_\xi)$; due to \refL{Lem B_xi BIP inversible} and the Lebesgue's dominated convergence Theorem,
$$m(\xi) = \lim_{\varepsilon \to 0} m_\varepsilon (\xi) = \lim_{\varepsilon \to 0} B_\xi^{-\varepsilon + ir} = B_\xi^{ir}  \in \L(X).$$ 
Since $m\, \F\left(f\right) \in \mathcal{S'}(\RR;\L(X))$ that is the temperate distributions space, thus this yields
$$\lim_{\varepsilon \to 0} \F^{-1}\left(m_{\varepsilon}\, \F\left(f\right) \right) = \F^{-1}\left(m \, \F\left(f\right) \right).$$
Now, from \refP{Prop m(xi) et xi m'(xi) R-bornés}, the set  
$$\left\{m(\xi), ~\xi \in \RR\setminus\{0\}\right\} \quad \text{and} \quad \left\{\xi m'(\xi), ~\xi \in \RR\setminus\{0\}\right\},$$
are $\mathcal{R}$-bounded. Thus, applying Theorem 3.4, p. 746 in \cite{weis}, we obtain that
$$\K f = \F^{-1}\left(m \, \F\left(f\right) \right)(\cdot),$$
is extends to a bounded operator from $L^p(a,b;X)$ to $L^p(a,b;X)$, for $p\in (1,+\infty)$. Thus, there exists $C_1 >0$, independant of $\gamma$, $r$ and $\xi$, such that
$$\begin{array}{lll}
\dis \left\|\F^{-1}\left(m \, \F\left(f\right) \right)(\cdot)\right\|_{\L(X)} & \leqslant & \dis \sup_{\xi \in \RR}\left\|m (\xi) \right\|_{\L(X)} + \sup_{\xi \in \RR}\left\|\xi \,\frac{\partial m }{\partial \xi}(\xi) \right\|_{\L(X)} \\ \ecart
& \leqslant & C_1 \left( 1 + |r|\right)e^{2\theta_A |r|}  \|f\|_{L^p(a,b;X)}.
\end{array}$$
Therefore, there exists $C_2 >0$, independant of $r$ and $\varepsilon$, such that, for all $\gamma > 0$, we have
$$\left\|\F^{-1}\left(m \F(f) \right)(.)\right\|_{\L(X)} \leqslant C_2 \, e^{(2\theta_A + \gamma) |r|} \left\|f\right\|_{L^p(a,b;X)},$$
hence
$$\left\|\int_\Gamma \lambda^{-\varepsilon + ir} J_{M_\lambda,J_{L_\lambda,f}}(x)~d\lambda \right\|_{\L(X)} \leqslant C_2\, e^{(2\theta_A + \gamma) |r|} \left\|f\right\|_{L^p(a,b;X)}.$$
Finally, from \refL{Lem Amann}, for $i=1,2,3,4$, for all $\varepsilon > 0$, there exists a constant $C_{\varepsilon,p} > 0$ such that for all $r \in \RR$, we have
$$-A_i \in \text{BIP}(X,2\theta_A+\varepsilon),$$
with
$$\left\|\left(-A_i\right)^{ir}\right\|_{\L(X)} \leqslant C_{\varepsilon,p} \,e^{(2\theta_A + \varepsilon) |r|}.$$
Moreover, for $i=1,2$, due to \cite{arendt-bu-haase}, Corollary 2.4, p. 69, we obtain that
$$-\A_i + \frac{k^2}{4}I \in \text{BIP}(X,2\theta_A+\varepsilon).$$
\end{proof}

\subsection{Proof of \refT{Th Pb Cauchy}}

\begin{proof}
Let $C \in \CC$. Then, for all $t \in [0,T]$, setting
$$v(t) = e^{C t} u(t),$$
it is clear that to solve problem \eqref{P} is equivalent to solve the following problem
\begin{equation}\label{P2}
\left\{\begin{array}{l}
\dis v'(t) + (- \A_i + C I)v(t) = g(t), \quad t \in(0,T] \\ \ecart
v(0) = u_0,
\end{array}\right.
\end{equation}
where $g(t) = e^{-C t}f(t) \in L^p(0,T;X)$. Then, setting
$$C = \left\{\begin{array}{ll}
\dis \frac{k^2}{4}, & \text{if } i=1,2, \\ \ecart
\dis \frac{k^2}{4} + r', & \text{if } i=3,4,
\end{array}\right.$$
we deduce from \refT{Th BIP}, for $\varepsilon \in (0,\pi/2 - 2\theta_A)$, that 
$$-\A_i + C I\in \text{BIP}\,(X,2\theta_A + \varepsilon), \quad \text{with }2\theta_A+\varepsilon \in (0,\pi/2).$$
It follows that $\A_i - CI$, is the infinitesimal generator of a strongly continuous analytic semigroup $\left(e^{t(\A_i + CI)}\right)_{t \geqslant 0}$. Moreover, from \eqref{0 in rho(Ai)}, we have $0 \in \rho(-\A_i + CI)$.

Since $g \in L^p(0,T;X)$, $p\in (1,+\infty)$, from \cite{de simon}, Lemma 2.1, p. 208, there exists a unique solution of problem \eqref{P2} given by
\begin{equation}\label{f rep}
v(t) = e^{t(\A_i - CI)} u_0 + \int_0^t e^{(t-s)(\A_i - CI)} g(s)~ds.
\end{equation}
If $v$, given by \eqref{f rep}, is a classical solution of problem \eqref{P2}, then
$$v \in W^{1,p}(0,T;X) \cap L^p\left(0,T;D(\A_i)\right).$$
It follows from \refL{Lem trace} that 
$$v(0) = u_0 \in \left(D(\A_i),X\right)_{\frac{1}{p},p}.$$
Conversely, if $u_0 \in \left(D(\A_i),X\right)_{\frac{1}{p},p}$, since the unique solution $v$ is given by \eqref{f rep}, it remains to show that $v$ is a classical solution. From \refL{Lem triebel}, we deduce that
$$t\longmapsto e^{t(\A_i-CI)}u_0 \in W^{1,p}(0,T;X) \cap L^p\left(0,T;D(\A_i)\right).$$
Finally, from \cite{dore-venni}, Theorem 3.2, p. 196, we obtain that
$$t\longmapsto \int_0^t e^{(t-s)(\A_i-CI)} g(s)~ds \in W^{1,p}(0,T;X) \cap L^p\left(0,T;D(\A_i)\right).$$
\end{proof}

\section{Application}\label{Sect Application}

Let $T>0$ and $\Omega = (a,b)\times \omega$, where $\omega$ is a bounded open set of $\RR^{n-1}$, $n\geqslant2$, with $C^2$-boundary. We focus ourselves on an equation of Cahn-Hilliard type given by
\begin{equation}\label{EQ edp}
\frac{\partial u}{\partial t} (t,x,y) + \Delta^2 u(t,x,y) - k \Delta u(t,x,y) = F(t,u(t,x,y)), \quad t \in [0,T],~x \in (a,b), ~y \in \omega,
\end{equation}
where $k \in \RR$ and
$$F(t,u(t,x,y)) = \frac{k' u(t,x,y)}{k'' + \|u(t,x,y)\|_{L^p((0,T)\times\Omega)}}, \qquad \text{with } k',k''>0\text{ and } p \in (1,+\infty),$$
is used to model, for instance of lactate or oxygen exchanges in glial cells or also the metabolites concentrations in the circadian rhythm in the brain. For more details, we refer to \cite{Miranville 1} or \cite{Miranville 2}.

This equation is supplemented by the initial condition
\begin{equation}
u(0,x,y) = u_0(x,y), \quad (x,y) \in \Omega,
\end{equation}
the homogeneous boundary condition
\begin{equation}\label{BC edp}
u(t,x,\zeta) = \Delta u(t,x,\zeta) = 0, \quad t \in (0,T],~(x,\zeta) \in (a,b) \times \partial\omega,
\end{equation}
and one of the following homogeneous boundary conditions:
\begin{equation}\label{BC edp 1}\tag{BC$_{\text{pde}}$ 1}
\left\{\begin{array}{rclrclc}
u(t,a,y)   & = & 0, & u(t,b,y)   & = & 0, & t \in (0,T],~y \in \omega, \medskip \\
\dis \Delta u(t,a,y) & = & 0, & \dis \Delta u(t,b,y) & = & 0, & t \in (0,T],~y \in \omega,
\end{array}\right.
\end{equation}
\begin{equation}\label{BC edp 2}\tag{BC$_{\text{pde}}$ 2}
\left\{\begin{array}{rclrclc}
\dis\partial_x u(t,a,y) & = & 0, & \dis\partial_x u(t,b,y) & = & 0, & t \in (0,T],~y \in \omega, \medskip \\ 
\dis \Delta u(t,a,y) &= & 0, & \dis \Delta u(t,b,y) &= & 0, & t \in (0,T],~y \in \omega,
\end{array}\right.
\end{equation}
\begin{equation}\label{BC edp 3}\tag{BC$_{\text{pde}}$ 3}
\left\{\begin{array}{rclrclc}
u(t,a,y) & = & 0, & u(t,b,y) & = & 0, &t \in (0,T],~ y \in \omega, \medskip \\
\dis\partial_x u(t,a,y) & = & 0, &\dis \partial_x u(t,b,y) & = & 0, &t \in (0,T],~ y \in \omega,
\end{array}\right.
\end{equation}
or
\begin{equation}\label{BC edp 4}\tag{BC$_{\text{pde}}$ 4}
\left\{\begin{array}{rclrclc}
\dis\partial_x u(t,a,y) & = & 0, & \dis\partial_x u(t,b,y) & = & 0, & t \in (0,T],~y \in \omega, \medskip \\
\dis\partial^2_{x} u(t,a,y) & = & 0, & \dis\partial^2_{x} u(t,b,y) & = & 0, & t \in (0,T],~ y \in \omega.
\end{array}\right.
\end{equation}

We set
\begin{equation*}
\left\{\begin{array}{l}
D(A_0) := W^{2,p}(\omega) \cap W^{1,p}_0(\omega) \\ \ecart
\forall \psi \in D(A_0), \quad A_0 \psi = \Delta_y \psi.
\end{array}\right.
\end{equation*}
Since we have
$$\Delta v(x,y) = \frac{\partial^2 v}{\partial x^2}(x,y) + \Delta_y v,(x,y),$$
then, using operator $A_0$, it follows that 
$$\Delta^2 v(x,y) - k \Delta v(x,y), \quad x \in (a,b), ~y \in \omega,$$
can be written as
$$v^{(4)}(x) + (2A_0 - kI) v''(x) + (A^2_0 - k A_0 ) v(x),$$
where $v(x)=v(x,\cdot)$.

Now, for $i=1,2,3,4$, we deduce that the following linear operators corresponds to the abstract formulation of the spatial operator in \eqref{Pb Cauchy non lineaire}:
$$\left\{\begin{array}{ccl}
D(\A_{0,i}) & = & \{u \in W^{4,p}(a,b;L^p(\omega))\cap L^p(a,b;D(A_0^2))\text{ and } u'' \in L^p(a,b;D(A_0)) : \text{(BCi)}_0\} \\ \ecart
\left[\A_{0,i} u\right](x) & = & - u^{(4)}(x) - (2A_0 - kI) u''(x) - (A^2_0 - k A_0) u(x), \quad u \in D(\A_{0,i}),~x\in (a,b).
\end{array}\right.$$
It follows that, for instance, problem \eqref{EQ edp}-\eqref{BC edp}-\eqref{BC edp 1} writes
\begin{equation}\label{Pb A1}
\left\{\begin{array}{l}
u'(t) - \A_{0,1} u(t) = F(t,u(t)), \quad t\in [0,T] \medskip \\ 
u(0) = u_0,
\end{array}\right.
\end{equation}
where $u(t) = u(t,\cdot) = v(x)$.
 
Then, we have the following result.
\begin{Th}
Let $k \in\RR$ such that $k > -C_\omega$, where $C_\omega >0$ is the Poincar\'e constant in $\omega$. Then, there exists a unique classical global solution of problem \eqref{Pb A1} if and only if
$$u_0 \in (D(\A_{0,1}),X)_{\frac{1}{p},p}.$$
\end{Th}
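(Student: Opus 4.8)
The plan is to recognize \eqref{Pb A1} as the particular case of the abstract semilinear problem \eqref{P non linéaire} obtained by choosing the base space to be $L^p(\omega)$, $p \in (1,+\infty)$, the operator $A$ to be $A_0$, and $i = 1$, so that $\A_i = \A_{0,1}$ and the evolution takes place in $L^p(a,b;L^p(\omega)) \cong L^p(\Omega)$; then one applies \refC{Cor sol Pb non linéaire}. Since the identification of $D(\A_{0,1})$ and (BC1)$_0$ with the boundary conditions \eqref{BC edp}--\eqref{BC edp 1} has already been carried out above, it only remains to verify that $A_0$ satisfies $(H_1)$, $(H_2)$, $(H_3)$ and $(H_4)$ for some $\theta_A \in [0,\pi/4)$, and that the reaction term $F$ meets the measurability, continuity and local Lipschitz assumptions of \refC{Cor sol Pb non linéaire}.

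First I would check the structural hypotheses on $A_0$. As $p \in (1,+\infty)$, $L^p(\omega)$ is a UMD space, so $(H_1)$ holds. Since $\omega$ is bounded with $C^2$-boundary, $A_0$ has compact resolvent and $\sigma(A_0) = \{-\lambda_n : n \geqslant 1\}$, the $-\lambda_n$ being the eigenvalues of the Dirichlet Laplacian on $\omega$, with $0 < C_\omega = \lambda_1 \leqslant \lambda_2 \leqslant \cdots$ where $C_\omega$ is the Poincaré constant of $\omega$; hence $\sigma(A_0) \subset (-\infty,-C_\omega]$. Consequently $0 \in \rho(A_0)$, which is $(H_2)$, and the assumption $k > -C_\omega$ gives exactly $[k,+\infty) \subset \rho(A_0)$, which is $(H_4)$. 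Finally, $-A_0$ is a non-negative injective self-adjoint operator on $L^2(\omega)$ whose heat semigroup admits Gaussian upper bounds, so it has a bounded $H^\infty$-functional calculus on $L^p(\omega)$ of arbitrarily small angle; in particular $-A_0 \in$ BIP$(L^p(\omega),\theta_{A_0})$ for some $\theta_{A_0} \in [0,\pi/4)$, which is $(H_3)$ and simultaneously satisfies the restriction $\theta_A \in [0,\pi/4)$.

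Next I would check the assumptions on $F$. For fixed $u$, $t \mapsto F(t,u)$ is measurable, and for a.e. $t$ the map $u \mapsto F(t,u)$ is continuous into $L^p(\Omega)$ since the denominator $k'' + \|u\|_{L^p((0,T)\times\Omega)}$ stays $\geqslant k'' > 0$. For the Lipschitz bound, given trajectories with $\|u\|_{L^p((0,T)\times\Omega)}, \|\overline{u}\|_{L^p((0,T)\times\Omega)} \leqslant r$, I would use
$$F(t,u) - F(t,\overline{u}) = \frac{k'\,(u-\overline{u})}{k'' + \|u\|_{L^p((0,T)\times\Omega)}} + k'\,\overline{u}\,\frac{\|\overline{u}\|_{L^p((0,T)\times\Omega)} - \|u\|_{L^p((0,T)\times\Omega)}}{\big(k'' + \|u\|_{L^p((0,T)\times\Omega)}\big)\big(k'' + \|\overline{u}\|_{L^p((0,T)\times\Omega)}\big)},$$
take the $L^p(\Omega)$-norm, and estimate the two terms using $k'' > 0$ together with $\big|\,\|u\| - \|\overline{u}\|\,\big| \leqslant \|u - \overline{u}\|_{L^p((0,T)\times\Omega)}$. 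This yields $\|F(t,u) - F(t,\overline{u})\|_{L^p(\Omega)} \leqslant C\,\|u - \overline{u}\|_{L^p((0,T)\times\Omega)}$ with $C = C(k',k'')$ independent of $t$ and of $r$, so one may take $\phi_r$ to be the constant $C \in L^p(0,T;\RR_+)$.

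All hypotheses of \refC{Cor sol Pb non linéaire} being in force for $i = 1$, that corollary yields a unique global classical solution of \eqref{Pb A1} if and only if $u_0 \in (D(\A_{0,1}),L^p(\Omega))_{\frac{1}{p},p}$, which is the claim. The only step that is not entirely routine is $(H_3)$: it rests on the classical — but non-trivial — fact that the $L^p$-realization of the Dirichlet Laplacian on a regular bounded domain has bounded imaginary powers of small angle; the rest is the spectral description of $A_0$, which gives $(H_2)$ and $(H_4)$, and an elementary estimate for the non-local reaction term $F$.
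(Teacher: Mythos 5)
Your proposal is correct and follows essentially the same route as the paper: identify \eqref{Pb A1} with the abstract problem for $\A_{0,1}$, verify $(H_1)$--$(H_4)$ (the spectral localisation $\sigma(A_0)\subset(-\infty,-C_\omega]$ giving $(H_2)$ and $(H_4)$, and BIP of small angle for the Dirichlet Laplacian giving $(H_3)$ with $\theta_A<\pi/4$, which the paper gets by citing Pr\"uss--Sohr), check the Lipschitz hypothesis on $F$ (which the paper delegates to Miranville's references while you compute it directly), and apply \refC{Cor sol Pb non linéaire}. The only nitpick is in your Lipschitz estimate: the first term of your decomposition is naturally bounded by $\tfrac{k'}{k''}\|u(t)-\overline{u}(t)\|_{L^p(\Omega)}$, which is not pointwise in $t$ dominated by a constant times $\|u-\overline{u}\|_{L^p((0,T)\times\Omega)}$, so $\phi_r$ should be taken as a genuine $L^p(0,T)$ function rather than a constant --- a harmless adjustment that the corollary's hypothesis explicitly allows.
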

\begin{proof}
We have to satisfy assumptions of \refC{Cor sol Pb non linéaire}. Here $X=L^p(0,T;L^p(\Omega))$, $p \in (1, +\infty)$. From \cite{rubio}, Proposition~3, p. 207, $X$ satisfies $(H_1)$. Moreover, since $\Omega$ is bounded, $(H_2)$ and $(H_3)$ are satisfied for every $\theta_A \in ~(0,\pi)$, from \cite{pruss-sohr 2}, Theorem~C, p.~166-167. Since $C_\omega$ is the Poincar\'e constant in $\omega$, we have $(-C_\omega,+\infty) \subset \rho(A_0)$. Moreover, $k > -C_\omega$, thus $(H_4)$ is satisfied. Then, thanks to \cite{Miranville 1}, p. 1825 or \cite{Miranville 2}, section 4.5, p. 112, $F$ satisfy the Lipschitz condition of \refC{Cor sol Pb non linéaire}.

Finally, all the assumptions of \refC{Cor sol Pb non linéaire} are satisfied. It follows that, there exists a unique global classical solution of problem \eqref{Pb A1} if and only if 
$$
u_0 \in \left(D(\A_{0,1}),X\right)_{\frac{1}{p},p}.
$$
\end{proof}
Problems \eqref{EQ edp}-\eqref{BC edp}-\eqref{BC edp 2}, \eqref{EQ edp}-\eqref{BC edp}-\eqref{BC edp 3} and \eqref{EQ edp}-\eqref{BC edp}-\eqref{BC edp 4} can also be treated similarly.

\begin{Rem}
For the reader convenience, we make explicit the following interpolation space
$$\left(D(\A_{0,1}),X\right)_{\frac{1}{p},p} = \left\{\begin{array}{ll}
B^{4(1-\frac{1}{p}),p}(a,b;L^p(\omega)) & \text{if } 4(1-\frac{1}{p}) < \frac{1}{p} \Longleftrightarrow 1 < p < \frac{5}{4} \medskip \\

B^{\frac{1}{p},p}_{0,0}(a,b;L^p(\omega)) & \text{if } 4(1-\frac{1}{p}) = \frac{1}{p} \Longleftrightarrow p = \frac{5}{4} \medskip \\

B_0^{4(1-\frac{1}{p}),p}(a,b;L^p(\omega)) & \text{if } \frac{1}{p} < 4(1-\frac{1}{p}) \leqslant 2 + \frac{1}{p} \Longleftrightarrow \frac{5}{4} < p \leqslant \frac{5}{2} \medskip \\

B_{0,0''}^{4(1-\frac{1}{p}),p}(a,b;L^p(\omega)) & \text{if } 2 + \frac{1}{p} < 4(1-\frac{1}{p}) < + \infty \Longleftrightarrow \frac{5}{2} < p < +\infty,
\end{array} \right.,$$
where the Besov space $B^{4(1-\frac{1}{p}),p}(a,b;L^p(\omega))$ is defined in \cite{grisvard 2}, D\'efinition 5.8 and Proposition 5.9, p.~334. Moreover
$$B_{0,0}^{\frac{1}{p},p}(a,b;L^p(\omega)) = \left\{\psi \in B^{\frac{1}{p},p}(a,b;L^p(\omega)) : \int_a^b \frac{\|\psi(x)\|_{L^p(\omega)}^p}{\inf(x-a,b-x)}~dx \right\},$$
$$B_0^{4(1-\frac{1}{p}),p}(a,b;L^p(\omega)) = \left\{\psi \in B^{4(1-\frac{1}{p}),p}(a,b;L^p(\omega)) : \psi(a) = \psi(b) = 0\right\},$$
and 
$$B_{0,0''}^{4(1-\frac{1}{p}),p}(a,b;L^p(\omega)) = \left\{\psi \in B_0^{4(1-\frac{1}{p}),p}(a,b;L^p(\omega)) : \psi''(a) = \psi''(b) = 0\right\}.$$
\end{Rem}




\end{document}